\documentclass{article}
\usepackage{array,amsmath,amsthm}
\numberwithin{equation}{section}
\usepackage{amssymb}
\usepackage{indentfirst}
\usepackage{geometry}
\textwidth 155truemm
\begin{document}

\newtheorem{thm}{Theorem}[section]
\newtheorem{cor}[thm]{Corollary}
\newtheorem{prop}[thm]{Proposition}
\newtheorem{conj}[thm]{Conjecture}
\newtheorem{lem}[thm]{Lemma}
\newtheorem{Def}[thm]{Definition}
\newtheorem{rem}[thm]{Remark}
\newtheorem{prob}[thm]{Problem}
\newtheorem{ex}{Example}[section]

\newcommand{\be}{\begin{equation}}
\newcommand{\ee}{\end{equation}}
\newcommand{\ben}{\begin{enumerate}}
\newcommand{\een}{\end{enumerate}}
\newcommand{\beq}{\begin{eqnarray}}
\newcommand{\eeq}{\end{eqnarray}}
\newcommand{\beqn}{\begin{eqnarray*}}
\newcommand{\eeqn}{\end{eqnarray*}}
\newcommand{\bei}{\begin{itemize}}
\newcommand{\eei}{\end{itemize}}

\newcommand{\pa}{{\partial}}
\newcommand{\V}{{\rm V}}
\newcommand{\R}{{\bf R}}
\newcommand{\K}{{\rm K}}
\newcommand{\e}{{\epsilon}}
\newcommand{\tomega}{\tilde{\omega}}
\newcommand{\tOmega}{\tilde{Omega}}
\newcommand{\tR}{\tilde{R}}
\newcommand{\tB}{\tilde{B}}
\newcommand{\tGamma}{\tilde{\Gamma}}
\newcommand{\fa}{f_{\alpha}}
\newcommand{\fb}{f_{\beta}}
\newcommand{\faa}{f_{\alpha\alpha}}
\newcommand{\faaa}{f_{\alpha\alpha\alpha}}
\newcommand{\fab}{f_{\alpha\beta}}
\newcommand{\fabb}{f_{\alpha\beta\beta}}
\newcommand{\fbb}{f_{\beta\beta}}
\newcommand{\fbbb}{f_{\beta\beta\beta}}
\newcommand{\faab}{f_{\alpha\alpha\beta}}

\newcommand{\pxi}{ {\pa \over \pa x^i}}
\newcommand{\pxj}{ {\pa \over \pa x^j}}
\newcommand{\pxk}{ {\pa \over \pa x^k}}
\newcommand{\pyi}{ {\pa \over \pa y^i}}
\newcommand{\pyj}{ {\pa \over \pa y^j}}
\newcommand{\pyk}{ {\pa \over \pa y^k}}
\newcommand{\dxi}{{\delta \over \delta x^i}}
\newcommand{\dxj}{{\delta \over \delta x^j}}
\newcommand{\dxk}{{\delta \over \delta x^k}}

\newcommand{\px}{{\pa \over \pa x}}
\newcommand{\py}{{\pa \over \pa y}}
\newcommand{\pt}{{\pa \over \pa t}}
\newcommand{\ps}{{\pa \over \pa s}}
\newcommand{\pvi}{{\pa \over \pa v^i}}
\newcommand{\ty}{\tilde{y}}
\newcommand{\bGamma}{\bar{\Gamma}}

\title {On Finsler metric measure manifolds with integral weighted Ricci curvature bounds\footnote{The first author is supported by the National Natural Science Foundation of China (12371051, 12141101, 11871126). The second author is supported by the Chongqing Postgraduate Research and Innovation Project (CYB23231).}}
\author{ Xinyue Cheng$^{1}$ \& Yalu Feng$^{1}$\\
$^{1}$ School of Mathematical Sciences, Chongqing Normal University, \\
Chongqing, 401331, P.R. China\\
E-mails: chengxy@cqnu.edu.cn (X. Cheng), fengyl2824@qq.com (Y. Feng)}
\date{}

\maketitle

\begin{abstract}
In this paper, we study deeply geometric and topological properties of Finsler metric measure manifolds with the integral weighted Ricci curvature bounds. We first establish Laplacian comparison theorem, Bishop-Gromov type volume comparison theorem and relative volume comparison theorem on such Finsler manifolds. Then we obtain a volume growth estimate and Gromov pre-compactness under the integral weighted Ricci curvature bounds. Furthermore, we prove the local Dirichlet isoperimetric constant estimate on Finsler metric measure manifolds with integral weighted Ricci curvature bounds. As applications of the Dirichlet isoperimetric constant estimates, we get first Dirichlet eigenvalue estimate and a gradient estimate for harmonic functions.\\
{\bf Keywords:} Finsler metric measure manifold; integral weighted Ricci curvature; Gromov pre-compactness; isoperimetric constant; Sobolev constant; gradient estimate\\
{\bf Mathematics Subject Classification:} 53C60,  53B40, 58C35

\end{abstract}

\section{Introduction}
Integral Ricci curvature in Riemannian geometry is a very important notion that naturally appears in diverse situations and has been extensively studied in the last few decades.  In \cite{Ga}, Gallot studied the isoperimetric inequalities by studying inequalities on eigenvalues or Sobolev constants on Riemannian manifolds with integral Ricci curvature bounds.  In the fundamental work \cite{PeterW1}, the important Laplacian comparison and volume comparison are generalized to Riemannian manifolds with integral Ricci curvature lower bound.   Combining this with D. Yang's estimate \cite{Yang1} on the local Sobolev constant, the Cheeger-Colding-Naber theory has now been successfully extended to the manifolds with integral Ricci curvature bounds \cite{PeterW2}. Recently, Dai-Wei-Zhang \cite{DWZ} obtained a local Sobolev constant estimate under the integral Ricci curvature bounds. Based on this, they extended the maximum principal, the gradient estimate, the heat kernel estimate and the $L^2$-Hessian estimate to Riemannian manifolds with integral Ricci curvature lower bounds, without the non-collapsing conditions. Further, Wang-Wei generalized the local Sobolev constant estimate and related results in \cite{DWZ} to complete smooth Riemannian metric measure spaces with integral Bakry-\'{E}mery Ricci curvature bounds \cite{Wang-Wei}.

Finsler geometry is just Riemannian geometry without the quadratic restriction \cite{Chern}. It is natural to study geometric and topological properties of Finsler metric measure manifolds with the integral Ricci curvature bounds.
Wu B.-Y. \cite{WuB2012,WuB2021} obtained the volume comparison theorem and proved finiteness of fundamental group and the Gromov pre-compactness of Finsler manifolds under the integral Ricci curvature bounds. He also characterized volume growth and the relationship between integral Ricci curvature and the first Betti number of Finsler manifolds with integral Ricci curvature bounds. Later, Zhao \cite{ZhaoW} established a relative volume comparison and obtained several Myers type theorems on Finsler manifolds with integral Ricci curvature bounds.

In this paper, following the arguments in \cite{PeterW1, PeterW2}, we introduce the integral weighted Ricci curvature and try to extend some results in \cite{PeterW1, PeterW2} and  \cite{DWZ,Wang-Wei} to Finsler metric measure manifolds with integral weighted Ricci curvature bounds.

A Finsler manifold $(M, F)$ equipped with a measure $m$ is called a Finsler metric measure manifold (or Finsler measure space for short) and is denoted by $(M, F, m)$. The class of Finsler measure spaces is one of the most important metric measure spaces. However, a Finsler metric measure space is not a metric space in the usual sense because a Finsler metric $F$ may be nonreversible, that is, $F(x, y)\neq F(x, -y)$ may happen for $(x,y) \in TM$. This non-reversibility causes the asymmetry of the associated distance function. Moreover, unlike Riemannian Laplacian, Finsler Laplacian is a nonlinear elliptic differential operator of the second order. This is another difference between Finsler metric measure spaces and other metric measure spaces. For more details, please refer to \cite{BaoChSh,ChernShen,ChSh,KristalyZ,OHTA,Ohta,Xia}.

The paper is organized as follows. In Section \ref{Pre}, we give some necessary definitions and notations. Then we first derive a Laplacian comparison theorem (Theorem \ref{phi-laplacian}) and several volume comparison theorems (Theorem \ref{phi-volume}, Theorem \ref{doubling} and Theorem \ref{relative-volume}) with integral weighted Ricci curvature bounds in Section \ref{Integral}.  Based on these, we give a volume growth estimate (Theorem \ref{growth}) and Gromov pre-compactness (Theorem \ref{pre-com1} and Theorem \ref{pre-com2}) of Finsler metric measure manifolds with integral weighted Ricci curvature bounds still in Section \ref{Integral}. Section \ref{Sobolev} is devoted to the proof of the Dirichlet isoperimetric constant estimate (Theorem \ref{constant}). Finally, as applications of the Dirichlet isoperimetric constant estimate, we get the first (nonzero) Dirichlet eigenvalue estimate (Theorem \ref{eiges}) and a gradient estimate (Theorem \ref{gradient}) for harmonic functions in Section \ref{Application}.

\section{Preliminaries}{\label{Pre}}
In this section, we briefly review some necessary definitions, notations and the fundamental results in Finsler geometry. For more details, we refer to \cite{BaoChSh, OHTA, Shen1}.

Let $M$ be an $n$-dimensional smooth manifold. A Finsler metric on manifold $M$ is a function $F: T M \longrightarrow[0, \infty)$  satisfying the following properties: (1) $F$ is $C^{\infty}$ on $TM\backslash\{0\}$; (2) $F(x,\lambda y)=\lambda F(x,y)$ for any $(x,y)\in TM$ and all $\lambda >0$; (3)  $F$ is strongly convex, that is, the matrix $\left(g_{ij}(x,y)\right)=\left(\frac{1}{2}(F^{2})_{y^{i}y^{j}}\right)$ is positive definite for any nonzero $y\in T_{x}M$. The pair $(M,F)$ is called a Finsler manifold and $g:=g_{ij}(x,y)dx^{i}\otimes dx^{j}$ is called the fundamental tensor of $F$.

For a non-vanishing vector field $V$ on $M$, one introduces the weighted Riemannian metric $g_V$ on $M$ given by
\be
g_V(y, w)=g_{ij}(x, V_x)y^i w^j  \label{weiRiem}
\ee
for $y, w\in T_{x}M$. In particular, $g_{V}(V,V)=F^2(x,V)$.

We define the reverse metric $\overleftarrow{F}$ of a Finsler metric $F$ by $\overleftarrow{F}(x, y):=F(x,-y)$ for all $(x, y) \in T M$. It is easy to see that $\overleftarrow{F}$ is also a Finsler metric on $M$. A Finsler metric $F$ on $M$ is said to be reversible if $\overleftarrow{F}(x, y)=F(x, y)$ for all $(x, y) \in T M$. In order to overcome the deficiencies that a Finsler metric $F$ may be nonreversible, Rademacher defined the reversibility $\Lambda_{F}$ of $F$ by
\be
\Lambda_F:=\sup _{(x, y) \in TM \backslash\{0\}} \frac{\overleftarrow{F}(x, y)}{F(x, y)}.
\ee
Obviously, $\Lambda_F \in [1, \infty]$ and $\Lambda_F=1$ if and only if $F$ is reversible \cite{Ra}. On the other hand, Ohta extended the concepts of uniform smoothness and uniform convexity in Banach space theory into Finsler geometry and gave their geometric interpretation in \cite{Ohta}. We say that $F$ satisfies uniform convexity and uniform smoothness if there exist two uniform constants $0<\kappa^{*}\leq 1 \leq \kappa <\infty$ such that for $x\in M$, $V\in T_{x}M\setminus \{0\}$ and $W\in T_{x}M$,
\begin{equation}
\kappa^{*}F^{2}(x, W)\leq g_{V}(W, W)\leq \kappa F^{2}(x, W). \label{usk}
\end{equation}
If $F$ satisfies the uniform smoothness and uniform convexity, then $\Lambda_F$ is finite with
$$
1 \leq \Lambda_F \leq \min \left\{\sqrt{\kappa}, \sqrt{1 / \kappa^*}\right\}.
$$
$F$ is Riemannian if and only if $\kappa=1$ if and only if $\kappa^*=1$ \cite{Ohta,Ra}.

Let $(M,F)$ be a Finsler manifold of dimension $n$. The pull-back $\pi ^{*}TM$ admits a unique linear connection, which is called the Chern connection. The Chern connection $D$ is determined by the following equations
\beq
&& D^{V}_{X}Y-D^{V}_{Y}X=[X,Y], \label{chern1}\\
&& Zg_{V}(X,Y)=g_{V}(D^{V}_{Z}X,Y)+g_{V}(X,D^{V}_{Z}Y)+ 2C_{V}(D^{V}_{Z}V,X,Y) \label{chern2}
\eeq
for $V\in TM\setminus \{0\}$  and $X, Y, Z \in TM$, where
$$
C_{V}(X,Y,Z):=C_{ijk}(x,V)X^{i}Y^{j}Z^{k}=\frac{1}{4}\frac{\pa ^{3}F^{2}(x,V)}{\pa V^{i}\pa V^{j}\pa V^{k}}X^{i}Y^{j}Z^{k}
$$
is the Cartan tensor of $F$ and $D^{V}_{X}Y$ is the covariant derivative with respect to the reference vector $V$.

Given a non-vanishing vector field $V$ on $M$,  the Riemannian curvature  $R^V$ is defined by
$$
R^V(X, Y) Z=D_X^V D_Y^V Z-D_Y^V D_X^V Z-D_{[X, Y]}^V Z
$$
for any vector fields $X$, $Y$, $Z$ on $M$. For two linearly independent vectors $V, W \in T_x M \backslash\{0\}$, the flag curvature is defined by
$$
\mathcal{K}^V(V, W)=\frac{g_V\left(R^V(V, W) W, V\right)}{g_V(V, V) g_V(W, W)-g_V(V, W)^2}.
$$
Then the Ricci curvature is defined as
$$
\operatorname{Ric}(V):=F(x, V)^{2} \sum_{i=1}^{n-1} \mathcal{K}^V\left(V, e_i\right),
$$
where $e_1, \ldots, e_{n-1}, \frac{V}{F(V)}$ form an orthonormal basis of $T_x M$ with respect to $g_V$.

For $x_1, x_2 \in M$, the distance from $x_1$ to $x_2$ is defined by
$$
d\left(x_1, x_2\right):=\inf _\gamma \int_0^1 F(\gamma(t), \dot{\gamma}(t)) d t,
$$
where the infimum is taken over all $C^1$ curves $\gamma:[0,1] \rightarrow M$ such that $\gamma(0)=$ $x_1$ and $\gamma(1)=x_2$. Note that $d \left(x_1, x_2\right) \neq d \left(x_2, x_1\right)$ unless $F$ is reversible. The forward geodesic balls and backward geodesic balls of radius $R$ with center at $x_{0}$ are defined by
$$
B_R^{+}(x_0):=\{x \in M \mid d(x_{0}, x)<R\},\quad B_R^{-}(x_0):=\{x \in M \mid d(x, x_{0})<R\}.
$$

A $C^{\infty}$-curve $\gamma:[0,1] \rightarrow M$ is called a geodesic  if $F(\gamma, \dot{\gamma})$ is constant and it is locally minimizing. The exponential map $\exp _x: T_x M \rightarrow M$ is defined by $\exp _x(v)=\gamma(1)$ for $v \in T_x M$ if there is a geodesic $\gamma:[0,1] \rightarrow M$ with $\gamma(0)=x$ and $\dot{\gamma}(0)=v$. A Finsler manifold $(M, F)$ is said to be forward complete (resp. backward complete) if each geodesic defined on $[0, \ell)$ (resp. $(-\ell, 0])$ can be extended to a geodesic defined on $[0, \infty)$ (resp. $(-\infty, 0])$. We say $(M, F)$ is complete if it is both forward complete and backward complete. By Hopf-Rinow theorem on forward complete Finsler manifolds, any two points in $M$ can be connected by a minimal forward geodesic and the forward closed balls $\overline{B_R^{+}(p)}$ are compact (\cite{BaoChSh,Shen1}).

For a point $p \in M$ and a unit vector $y \in S_{p}M:=\{v \in T_{p}M \mid F(p,v)=1\}$, the cut value $i_y$ of $y$ is defined by
$$
i_{y} :=\sup \left\{t>0 \mid d(p, \exp _{p}(ty))=t \right\}.
$$
Further, we define the injectivity radius $i_p$ at $p$ by $i_{p}:=\inf\limits_{y \in S_{p} M} i_{y}$. Let
$$
\mathcal{D}_{p}:=\left\{\exp _{p}(t y)\mid 0 \leq t< i_{y}, y \in S_{p} M\right\} \subset M
$$
and
$$
Cut_{p}:=M-\mathcal{D}_{p}.
$$
$Cut_{p}$ and $\mathcal{D}_{p}$ are called the cut-locus and the cut-domain of $p$ respectively. Let
$$
\widetilde{\mathcal{D}}_{p}:=\left\{ty \mid \ 0 \leq t< i_{y}, y \in S_{p} M\right\} \subset T_{p}M .
$$
$\widetilde{\mathcal{D}}_{p}$ is called the tangent cut-domain at $p$. The exponential map
$$
\exp _{p}: \widetilde{\mathcal{D}}_{p} \rightarrow \mathcal{D}_{p}
$$
is an onto diffeomorphism. The cut-locus $Cut_{p}$ of $p$ always has null measure and $d_{p}:= d (p, \cdot)$ is $C^1$ outside the cut-locus of $p$ (see \cite{BaoChSh, Shen1}).

Let $(M, F, m)$ be an $n$-dimensional Finsler metric measure manifold. Write the volume form $dm$ of $m$ as $d m = \sigma(x) dx^{1} dx^{2} \cdots d x^{n}$. Define
\be\label{Dis}
\tau (x, y):=\ln \frac{\sqrt{{\rm det}\left(g_{i j}(x, y)\right)}}{\sigma(x)}.
\ee
We call $\tau$ the distortion of $F$. It is natural to study the rate of change of the distortion along geodesics. For a vector $y \in T_{x} M \backslash\{0\}$, let $\gamma=\gamma(t)$ be the geodesic with $\gamma(0)=x$ and $\dot{\gamma}(0)=y.$  Set
\be
{\bf S}(x, y):= \frac{d}{d t}\left[\tau(\gamma(t), \dot{\gamma}(t))\right]|_{t=0}.
\ee
$\mathbf{S}$ is called the S-curvature of $(M, F, m)$ \cite{ChernShen, shen}. We say that $\mathbf{S}\geq \kappa$ for some $\kappa \in \mathbb{R}$ if $\mathbf{S}(v)\geq \kappa F(v)$ for all $v\in TM$.

Let $Y$ be a $C^{\infty}$ geodesic field on an open subset $U \subset M$ and $\hat{g}=g_{Y}.$  Let
\be
d m:=e^{- \psi} {\rm Vol}_{\hat{g}}, \ \ \ {\rm Vol}_{\hat{g}}= \sqrt{{det}\left(g_{i j}\left(x, Y_{x}\right)\right)}dx^{1} \cdots dx^{n}. \label{voldecom}
\ee
It is easy to see that $\psi$ is given by
$$
\psi (x)= \ln \frac{\sqrt{\operatorname{det}\left(g_{i j}\left(x, Y_{x}\right)\right)}}{\sigma(x)}=\tau\left(x, Y_{x}\right),
$$
which is just the distortion of $F$ along $Y_{x}$ at $x\in M$ \cite{ChernShen, Shen1}. Let $y := Y_{x}\in T_{x}M$ (that is, $Y$ is a geodesic extension of $y\in T_{x}M$). Then, by the definitions of the S-curvature, we have
\beqn
&&  {\bf S}(x, y)= Y[\tau(x, Y)]|_{x} = d \psi (y),  \\
&&  \dot{\bf S}(x, y)= Y[{\bf S}(x, Y)]|_{x} =y[Y(\psi)],
\eeqn
where $\dot{\bf S}(x, y):={\bf S}_{|m}(x, y)y^{m}$ and ``$|$" denotes the horizontal covariant derivative with respect to the Chern connection  \cite{shen, Shen1}. Furthermore, the weighted Ricci curvatures are defined as follows \cite{ChSh,OHTA}
\beq
{\rm Ric}_{N}(y)&=& {\rm Ric}(y)+ \dot{\bf S}(x, y) -\frac{{\bf S}(x, y)^{2}}{N-n},   \label{weRicci3}\\
{\rm Ric}_{\infty}(y)&=& {\rm Ric}(y)+ \dot{\bf S}(x, y). \label{weRicciinf}
\eeq
We say that Ric$_N\geq K$ for some $K\in \mathbb{R}$ if Ric$_N(v)\geq KF^2(v)$ for all $v\in TM$, where $N\in \mathbb{R}\setminus \{n\}$ or $N= \infty$.

\begin{rem} On a Riemannian manifold $(M, g)$, the consideration of weighted measure of the form $e^{-f}{\rm Vol}_{g}$ arises naturally in various situations, where $f$ is a smooth function and ${\rm Vol}_{g}$ denotes the standard volume form induced by the metric $g$. On a smooth Riemannian metric measure space $\left(M, g, e^{-f} \mathrm{Vol}_{g}\right)$,  the Bakry-\'{E}mery Ricci tensor is defined as
$$
{\rm Ric}_{f}= {\rm Ric}+{\rm Hess} f.
$$
\end{rem}

\vskip 3mm

For any Finsler metric $F$, its dual metric
\begin{equation}\label{co-Finsler}
F^{*}(x, \xi):=\sup\limits_{y\in T_{x}M\setminus \{0\}} \frac{\xi (y)}{F(x,y)}, \ \ \forall \xi \in T^{*}_{x}M
\end{equation}
is a Finsler co-metric. According to Lemma 3.1.1 in \cite{Shen1}, for any vector $y\in T_{x}M\setminus \{0\}$, $x\in M$, the covector $\xi =g_{y}(y, \cdot)\in T^{*}_{x}M$ satisfies
\be
F(x,y)=F^{*}(x, \xi)=\frac{\xi (y)}{F(x,y)}. \label{shenF311}
\ee
Conversely, for any covector $\xi \in T_{x}^{*}M\setminus \{0\}$, there exists a unique vector $y\in T_{x}M\setminus \{0\}$ such that $\xi =g_{y}(y, \cdot)\in T^{*}_{x}M$ . Naturally,  we define a map ${\cal L}: TM \rightarrow T^{*}M$ by
$$
{\cal L}(y):=\left\{
\begin{array}{ll}
g_{y}(y, \cdot), & y\neq 0, \\
0, & y=0.
\end{array} \right.
$$
It follows from (\ref{shenF311}) that
$$
F(x,y)=F^{*}(x, {\cal L}(y)).
$$
Thus ${\cal L}$ is a norm-preserving transformation. We call ${\cal L}$ the Legendre transformation on Finsler manifold $(M, F)$.

Let
$$
g^{*kl}(x,\xi):=\frac{1}{2}\left[F^{*2}\right]_{\xi _{k}\xi_{l}}(x,\xi).
$$
For any $\xi ={\cal L}(y)$, we have
\be
g^{*kl}(x,\xi)=g^{kl}(x,y), \label{Fdual}
\ee
where $\left(g^{kl}(x,y)\right)= \left(g_{kl}(x,y)\right)^{-1}$. If $F$ is uniformly smooth and convex with (\ref{usk}), then  $\left(g^{*ij}\right)$ is uniformly elliptic in the sense that there exists two constants $\tilde{\kappa}=(\kappa^*)^{-1}$, $\tilde{\kappa}^*=\kappa^{-1}$ such that for $x \in M, \ \xi \in T^*_x M \backslash\{0\}$ and $\eta \in T_x^* M$, we have \cite{OHTA}
\be
\tilde{\kappa}^* F^{* 2}(x, \eta) \leq g^{*i j}(x, \xi) \eta_i \eta_j \leq \tilde{\kappa} F^{* 2}(x, \eta). \label{unisc}
\ee

Given a smooth function $u$ on $M$, the differential $d u_x$ at any point $x \in M$,
$$
d u_x=\frac{\partial u}{\partial x^i}(x) d x^i
$$
is a linear function on $T_x M$. We define the gradient vector $\nabla u(x)$ of $u$ at $x \in M$ by $\nabla u(x):=\mathcal{L}^{-1}(d u(x)) \in T_x M$. In a local coordinate system, we can express $\nabla u$ as
\be \label{nabna}
\nabla u(x)= \begin{cases}g^{* i j}(x, d u) \frac{\partial u}{\partial x^i} \frac{\partial}{\partial x^j}, & x \in M_u, \\ 0, & x \in M \backslash M_u,\end{cases}
\ee
where $M_{u}:=\{x \in M \mid d u(x) \neq 0\}$ \cite{Shen1}. In general, $\nabla u$ is only continuous on $M$, but smooth on $M_{u}$.

The Hessian of $u$ is defined by using Chern connection as
$$
\nabla^2 u(X, Y)=g_{\nabla u}\left(D_X^{\nabla u} \nabla u, Y\right).
$$
One can show that $\nabla^2 u(X, Y)$ is symmetric (see \cite{OS2, WuXin}).

Let
$$
W^{1, 2}(M)=\left\{u \in W^{1,2}_{\rm loc}(M)\cap L^2(M) \mid \int_{M} \left( F^{*2}(du)+ \overleftarrow{F}^{*2}(du) \right) dm <\infty \right\}
$$
and $W_{0}^{1, 2}(M)$ be the closure of $\mathcal{C}_{0}^{\infty}(M)$ under the norm
$$
\|u\|_{W^{1, 2}(M)}:=\|u\|_{L^2(M)}+\frac{1}{2}\|F^{*}(d u)\|_{L^2(M)}+\frac{1}{2}\|\overleftarrow{F}^{*}(d u)\|_{L^2(M)},
$$
where $\mathcal{C}_{0}^{\infty}(M)$ denotes the set of all smooth compactly supported functions on M.

Now, we decompose the volume form $dm$ of $m$ as $d m=\mathrm{e}^{\Phi} d x^1 d x^2 \cdots d x^n$. Then the divergence of a differentiable vector field $V$ on $M$ is defined by
$$
\operatorname{div}_m V:=\frac{\partial V^i}{\partial x^i}+V^i \frac{\partial \Phi}{\partial x^i}, \quad V=V^i \frac{\partial}{\partial x^i} .
$$
One can also define $\operatorname{div}_m V$ in the weak form by following divergence formula
$$
\int_M \phi \operatorname{div}_m V d m=-\int_M d \phi(V) d m
$$
for all $\phi \in \mathcal{C}_0^{\infty}(M)$. Further, the Finsler Laplacian $\Delta u$ is defined by
\be
\Delta u:=\operatorname{div}_m(\nabla u). \label{Lapla}
\ee
From (\ref{Lapla}), Finsler Laplacian is a nonlinear elliptic differential operator of the second order. Moreover, noticing that $\nabla u$ is weakly differentiable, the Finsler Laplacian should be understood in a weak sense, that is, for $u \in W^{1,2}(M)$, $\Delta u$ is defined by
\be
\int_M \phi \Delta u d m:=-\int_M d \phi(\nabla u) dm  \label{Lap1}
\ee
for $\phi \in \mathcal{C}_0^{\infty}(M)$ \cite{OHTA,Shen1}.

Given a weakly differentiable function $u$ and a vector field $V$ which does not vanish on $M_u$, the weighted Laplacian of $u$ on the weighted Riemannian manifold $\left(M, g_V, m\right)$ is defined  by
$$
\Delta^{V} u:= {\rm div}_{m}\left(\nabla^V u\right),
$$
where
$$
\nabla^V u:= \begin{cases}g^{ij}(x, V) \frac{\partial u}{\partial x^i} \frac{\partial}{\partial x^j} & \text { for } x \in M_u, \\ 0 & \text { for } x \notin M_u .\end{cases}
$$
Similarly, the weighted Laplacian can be viewed in a weak sense. We note that $\nabla^{\nabla u}u=\nabla u$ and $\Delta^{\nabla u} u=$ $\Delta u$. Moreover, it is easy to see that $\Delta u= {\rm tr}_{\nabla u} \nabla^2 u-{\bf S}(\nabla u)$ on $M_u$ \cite{OHTA,Shen1,WuXin}.

The following Bochner-Weitzenb\"{o}ck type formula established by Ohta-Sturm \cite{OS2} is very important to derive gradient estimates for harmonic function in this paper.

\begin{thm}{\rm (\cite{OHTA,OS2})}\label{boch}   For $u \in C^{\infty}(M)$, we have
\be
\Delta^{\nabla u}\left[\frac{F^2(\nabla u)}{2}\right]-d(\Delta u)(\nabla u)=\operatorname{Ric}_{\infty}(\nabla u)+\left\|\nabla^2 u\right\|_{\mathrm{HS}(\nabla u)}^2 \label{poinBo}
\ee
on $M_{u}$. Moreover, for $u \in H_{\mathrm{loc}}^{2}(M) \bigcap C^1(M)$ with $\Delta u \in H_{\mathrm{loc}}^{1}(M)$, we have
\beq
-\int_M d \phi\left(\nabla^{\nabla u}\left[\frac{F^2(x, \nabla u)}{2}\right]\right) d m =\int_M \phi \left\{d \left(\Delta u\right)(\nabla u)+{\rm Ric}_{\infty}(\nabla u)+\left\|\nabla^2 u\right\|_{{\rm HS}(\nabla u)}^2\right\} d m \label{BWforinf}
\eeq
for all bounded functions $\phi \in H_{0}^{1}(M) \bigcap L^{\infty}(M)$. Here $\left\|\cdot \right\|_{{\rm HS}(\nabla u)}$ denotes the Hilbert-Schmidt norm with respect to $g_{\nabla u}$.
\end{thm}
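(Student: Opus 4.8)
The plan is to deduce both identities from the classical \emph{weighted} Riemannian Bochner--Weitzenb\"{o}ck formula applied to the osculating Riemannian structure $\hat g:=g_{\nabla u}$, carefully matching the extra terms that appear because $\hat g$ and its associated weight depend on the base point through $\nabla u$. Fix first $u\in C^\infty(M)$ and work on the open set $M_u$, where $\nabla u$ is a smooth non-vanishing vector field, so that every sufficiently small $U\subset M_u$ carries a genuine Riemannian metric $\hat g$. Writing $dm=e^{-\psi}\mathrm{Vol}_{\hat g}$ with $\psi=\tau(\,\cdot\,,\nabla u)$ as in \eqref{Dis}--\eqref{voldecom}, the triple $(U,\hat g,m)$ is a smooth Riemannian metric measure space, and on it
\be
\tfrac12\,\Delta_{\hat g,m}\,\hat g(\nabla^{\hat g}u,\nabla^{\hat g}u)=\big\|\mathrm{Hess}^{\hat g}u\big\|_{\mathrm{HS}}^2+\hat g\big(\nabla^{\hat g}u,\nabla^{\hat g}(\Delta_{\hat g,m}u)\big)+\big(\mathrm{Ric}^{\hat g}+\mathrm{Hess}^{\hat g}\psi\big)(\nabla^{\hat g}u,\nabla^{\hat g}u),
\ee
where $\Delta_{\hat g,m}=\Delta_{\hat g}-\hat g(\nabla^{\hat g}\psi,\nabla^{\hat g}\,\cdot\,)$. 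Since $\nabla^{\hat g}u=\nabla^{\nabla u}u=\nabla u$ and $\hat g(\nabla u,\nabla u)=F^2(\nabla u)$, the left-hand argument is exactly $F^2(\nabla u)/2$, and $\nabla^{\hat g}(F^2(\nabla u)/2)=\nabla^{\nabla u}(F^2(\nabla u)/2)$.

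The core of the pointwise statement is to translate each term of this formula into its Finsler counterpart on $M_u$. The operator $\Delta_{\hat g,m}$ acting on $F^2(\nabla u)/2$ differs from $\Delta^{\nabla u}$ only through the $x$--dependence of the reference vector inside $g_{ij}(x,\nabla u(x))$; using the Chern connection identity \eqref{chern2} this discrepancy reduces to a Cartan--tensor term in $D^{\nabla u}_{\nabla u}\nabla u$. Next, $\Delta_{\hat g,m}u$ is compared with $\Delta u=\mathrm{tr}_{\nabla u}\nabla^2 u-\mathbf{S}(\nabla u)$: the Finsler Hessian $\nabla^2 u$ and $\mathrm{Hess}^{\hat g}u$ agree modulo a Cartan term (the Chern connection with reference $\nabla u$ and the Levi--Civita connection of $\hat g$ coincide on $\nabla u$ only up to $C_{\nabla u}(D^{\nabla u}_{\nabla u}\nabla u,\,\cdot\,,\,\cdot\,)$, since $C_{\nabla u}(\,\cdot\,,\,\cdot\,,\nabla u)=0$), and the weight gradient $\hat g(\nabla^{\hat g}\psi,\nabla u)$ reproduces $\mathbf{S}(\nabla u)$ from the definition of the distortion. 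Finally one relates $\mathrm{Ric}^{\hat g}(\nabla u,\nabla u)+\mathrm{Hess}^{\hat g}\psi(\nabla u,\nabla u)$ to $\mathrm{Ric}_\infty(\nabla u)=\mathrm{Ric}(\nabla u)+\dot{\mathbf{S}}(\nabla u)$ of \eqref{weRicciinf}, again up to Cartan terms. Collecting everything, all Cartan-- and S--curvature corrections generated by the base-point dependence of $g_{\nabla u}$ and of $\psi$ cancel, and \eqref{poinBo} follows on $M_u$. I expect this final cancellation, a long but essentially mechanical computation, to be the main obstacle of the first part.

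For the integrated identity \eqref{BWforinf} I would argue by approximation. Multiplying \eqref{poinBo} by $\phi\in H^1_0(M)\cap L^\infty(M)$, $\phi\ge 0$, and integrating over $M_u$, the weak divergence formula \eqref{Lap1} transfers $\Delta^{\nabla u}$ onto $\phi$ and produces the left-hand side of \eqref{BWforinf}; the complement $M\setminus M_u=\{du=0\}$ contributes nothing, since for $u\in H^2_{\mathrm{loc}}$ one has $du=0$ and $\nabla^2 u=0$ a.e. there, so each integrand vanishes a.e. To pass from $u\in C^\infty$ to $u\in H^2_{\mathrm{loc}}(M)\cap C^1(M)$ with $\Delta u\in H^1_{\mathrm{loc}}(M)$, take smooth $u_k\to u$ in $H^2_{\mathrm{loc}}$ with $\Delta u_k\to\Delta u$ in $H^1_{\mathrm{loc}}$ and verify that every term converges: the Hessian term is controlled in $L^1_{\mathrm{loc}}$ by the $H^2_{\mathrm{loc}}$ bound, the curvature term depends continuously on $\nabla u$, and $d(\Delta u)(\nabla u)$ uses $\Delta u\in H^1_{\mathrm{loc}}$. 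The delicate point is the nonlinearity and low regularity of $\nabla u$---only continuous globally and possibly vanishing on a set of positive measure---so one must justify that the $W^{1,1}_{\mathrm{loc}}$ vector field $\nabla^{\nabla u}[F^2(\nabla u)/2]$ may legitimately be inserted into the divergence formula and that the approximants converge strongly enough in the relevant norms. This regularity issue is the main obstacle of the second part; once it is handled, \eqref{BWforinf} follows from the pointwise formula.
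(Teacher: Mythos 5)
First, a point of order: the paper does not prove Theorem \ref{boch}. It is stated with the citations \cite{OHTA,OS2} and used as a black box, so there is no in-paper argument to compare yours against; your proposal has to be judged on its own terms. For the pointwise identity (\ref{poinBo}), your strategy --- apply the weighted Riemannian Bochner formula to the osculating structure $(M_u,\hat g=g_{\nabla u},m)$ with weight $\psi=\tau(\cdot,\nabla u)$ and convert each term --- is exactly the route of Ohta--Sturm, and your inventory of where the discrepancies live is accurate: they are all generated by $D^{\nabla u}_{\nabla u}\nabla u$ through the Cartan tensor via (\ref{chern2}), with $C_{\nabla u}(\cdot,\cdot,\nabla u)=0$ as the key simplification. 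The difficulty is that the asserted final cancellation \emph{is} the theorem. In particular, ${\rm Ric}^{\hat g}(\nabla u,\nabla u)+{\rm Hess}^{\hat g}\psi(\nabla u,\nabla u)$ does not equal ${\rm Ric}_\infty(\nabla u)$ term by term for the non-geodesic field $\nabla u$; equality only emerges after the corrections from the Hessian comparison, from $d\psi(\nabla u)-\mathbf{S}(\nabla u)$, and from the curvature comparison are combined, and none of these is computed in your proposal. By writing that you ``expect this final cancellation \ldots to be the main obstacle,'' you are conceding that the first half is a plan rather than a proof.

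The second half has a concrete gap beyond the one you flag. You cannot simply ``take smooth $u_k\to u$ in $H^2_{\rm loc}$ with $\Delta u_k\to\Delta u$ in $H^1_{\rm loc}$'': the Finsler Laplacian is nonlinear, the Legendre transform $\mathcal{L}^{-1}$ is only Lipschitz across the zero section, and mollifying $u$ gives at best $\Delta u_k\to\Delta u$ in $L^2_{\rm loc}$; the existence of approximants whose Laplacians converge in $H^1_{\rm loc}$ is itself a substantial claim that your argument presupposes rather than establishes. Moreover $\left\|\nabla^2u_k\right\|^2_{{\rm HS}(\nabla u_k)}$ is defined only on $M_{u_k}$, its reference vector degenerates as $du_k\to0$, and (\ref{BWforinf}) is an \emph{equality}, so you need genuine $L^1_{\rm loc}$ convergence of every term rather than semicontinuity. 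Handling this requires an argument tailored to the nonlinear structure (as in \cite{OS2}); identifying the regularity issue as ``the main obstacle'' does not resolve it, so the second half also remains open as written.
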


\section{Integral weighted Ricci curvature}\label{Integral}

Let $(M, F, m)$ be an $n$-dimensional Finsler manifold with a smooth measure $m$ and $x \in M$. Remember that $\widetilde{\cal D}_{x}=\left\{ty \mid 0\leq t< i_{y}, y\in S_{x}M\right\}$ and $\mathcal{D}_{x}=\exp_{x} (\widetilde{\cal D}_{x})$. For any $z \in \mathcal{D}_x$, we can choose the geodesic polar coordinates $(r, \theta)$ centered at $x$ for $z$ such that $r(z)=F(v)$ and $\theta^{\alpha}(z)=\theta^{\alpha}\left(\frac{v}{F(v)}\right)$ for $1\leq \alpha \leq n-1$, where $r(z)=d(x, z)$  and $v=\exp _x^{-1}(z) \in T_x M \backslash\{0\}$. It is well known that the distance function $r$ starting from $x \in M$ is smooth on $\mathcal{D}_x$ and $F(\nabla r)=1$ \cite{BaoChSh,Shen1}.  A basic fact is that the distance function $r=d(x, \cdot)$ satisfies the following \cite{Shen1,WuXin}
\[
\nabla r |_{z}= \frac{\pa}{\pa r}|_{z}.
\]
By Gauss's lemma, the unit radial coordinate vector $\frac{\partial }{{\partial r}}$ and the coordinate vectors $\frac{\partial }{{\partial {\theta ^\alpha }}}$ for $1\leq \alpha \leq n-1$ are mutually vertical with respect to $g_{\nabla r}$ (\cite{BaoChSh}, Lemma 6.1.1).  Therefore, we can simply write the volume form at $z=\exp _{x}(r\xi)$  with $v=r \xi$ as $dm |_{\exp _{x}(r \xi)}=\sigma(x, r, \theta) dr d\theta$, where $\xi \in S_{x}M$.
Then, for forward geodesic ball $B_{R}=B_R^{+}(x)$ of radius $R$ at the center $x \in M$, the volume of $B_R$ is
\be
m(B_R)=\int_{B_R} d m=\int_{B_R \cap \mathcal{D}_x} d m=\int_0^{R} dr \int_{\mathcal{D}_x(r)} \sigma(x, r, \theta) d\theta=\int_{S_{x}M}\int_0^{\min\{R, i_y\}} \sigma(x, r, \theta) dr d\theta, \label{volBall}
\ee
where $\theta^{\alpha}=\theta^{\alpha}(y)$ for $y\in S_{x}M$ and $\mathcal{D}_x(r)=\left\{y \in S_xM \mid r y \in \widetilde{\cal D}_{x} \right\}$ \cite{OHTA,Shen1}. Obviously, for any $0<s<t<R, \ \mathcal{D}_x(t) \subseteq \mathcal{D}_x(s)$. Besides, by the definition of Laplacian, we have \cite{Shen1, WuXin}
\be
\Delta r=\frac{\partial}{\partial r}\ln \sigma (x, r, \theta). \label{Lapdis}
\ee

Let
$$
{\rm \underline{Ric}_{\infty}}(z):=\min\limits_{y\in T_zM \backslash \{0\}} \frac{{\rm Ric_{\infty}}(z, y)}{F^2(z, y)}
$$
and ${\rm Ric}_{\infty}^{K}(z):= \max \left\{(n-1)K-{\rm \underline{Ric}_{\infty}(z), 0}\right\}$ for some $K \in \mathbb{R}$.  Given $p\geq 1$, $\vartheta\geq0$ and $R>0$. Let
\be
\overline{\|{\rm Ric}_{\infty}^{K}\|}_{p, R, \vartheta}(x):=\left(\frac{1}{m(B_R^{+}(x))}\int_{0}^{R} \int_{\mathcal{D}_x(r)} ({\rm Ric}_{\infty}^K)^p e^{-\vartheta r}\sigma(x, r, \theta) dr d\theta  \right)^{\frac{1}{p}}\label{InweiR1-1}
\ee
and
\be
\overline{\|{\rm Ric}_{\infty}^{K}\|}_{p, R, \vartheta}:=\sup\limits_{x\in M}\overline{\|{\rm Ric}_{\infty}^{K}\|}_{p, R, \vartheta}(x).\label{InweiR1-2}
\ee
$\overline{\|{\rm Ric}_{\infty}^{K}\|}_{p, R, \vartheta}$ measures how much the weighted Ricci curvature ${\rm Ric}_{\infty}$ lies below a given bound $(n-1)K$ in the $L^p$ sense. Clearly, $\overline{\|{\rm Ric}_{\infty}^{K}\|}_{p, R, \vartheta}=0$ iff ${\rm Ric_{\infty}}\geq (n-1)K$. It is often convenient to work with the following scale invariant curvature quantity (with $K=0$)
\be
\overline{\mathcal{K}}_x(p, R, \vartheta)=R^2\left(\frac{1}{m(B_R^{+}(x))}\int_0^{R} \int_{\mathcal{D}_x(r)} \left(-{\rm \underline{Ric}_{\infty}}\right)_+^p e^{-\vartheta r}\sigma(x, r, \theta) dr d\theta  \right)^{\frac{1}{p}}  \label{InweiR2-1}
\ee
and
\be
\overline{\mathcal{K}}(p, R, \vartheta):=\sup\limits_{x\in M} \overline{\mathcal{K}}_x(p, R, \vartheta),\label{InweiR2-2}
\ee
where $( \star )_{+}:=\max\{\star ~, ~ 0\}$.

Noticing that $\Delta r ={\rm tr}_{\nabla r}(\nabla ^{2}r)- {\bf S}(\nabla r)$, let $h:= {\rm tr}_{\nabla r}(\nabla ^{2}r)$ and $H_{K}(r):=(n-1)\frac{s_{K}'(r)}{s_{K}(r)}$, where
$$
{s_K}(t): = \left\{ {\begin{array}{*{20}{c}}
\begin{array}{l}
\frac{1}{{\sqrt K }}\sin (\sqrt K t)\\
t\\
\frac{1}{{\sqrt { - K} }}\sinh (\sqrt { - K} t)
\end{array}&\begin{array}{l}
K > 0,\\
K = 0,\\
K < 0.
\end{array}
\end{array}} \right.
$$
Consider
$$
\varphi(r, \theta): = \left\{\begin{array}{*{20}{c}}
\begin{array}{l}
\left(\Delta r-H_K(r)-\vartheta\right)_{+}\\
0
\end{array}
&\begin{array}{l}
0\leq r< i_x,\\
r\geq i_x.
\end{array}
\end{array} \right.
$$
It is clear that, if $\Delta r\geq H_K+\vartheta$, then $\varphi=\Delta r-H_K(r)-\vartheta$. Otherwise, $\varphi=0$.   Moreover, if $\mathbf{S}(\nabla r)\geq-\vartheta$, then
$$
\lim _{r \rightarrow 0^{+}} \varphi(r, \theta) =\lim _{r \rightarrow 0^{+}}\left(h-{\bf S}(\nabla r)-H_{K}(r)-\vartheta \right)_{+}=\lim _{r \rightarrow 0^{+}}(-{\bf S}(\nabla r)-\vartheta)_{+}=0
$$
because $h \sim \frac{n-1}{r}$ and $H_{K}(r) \sim \frac{n-1}{r}$ when $r \rightarrow 0^{+}$.

On the other hand, by (5.1) in \cite{WuXin}, that is,
$$
\frac{d}{d r} tr_{\nabla r}(\nabla^{2} r)+\sum_{i, j}\left(\nabla^{2}r(E_{i}, E_{j})\right)^{2}= -{\rm Ric}(\nabla r),
$$
where $E_{1}, \cdots, E_{n-1}, E_{n}=\nabla r$ is local $g_{\nabla r}$-orthonormal frame along the geodesic, we have
$$
\frac{\partial}{\partial r}h + \frac{h^2}{n-1} \leq \frac{d}{d r} tr_{\nabla r}(\nabla^{2} r)+\sum_{i, j}\left(\nabla^{2}r(E_{i}, E_{j})\right)^{2}= -{\rm Ric}(\nabla r).
$$
Then
$$
\frac{\partial}{\partial r} \Delta r =\frac{\partial}{\partial r}h - {\rm \dot{\mathbf{S}}}(\nabla r) \leq  - \frac{h^2}{n-1} -{\rm Ric}_{\infty}(\nabla r).
$$
Noticing that $H_K'(r)+\frac{H_K^2(r)}{n-1}=-(n-1)K$, we have
$$
\frac{\partial}{\partial r} \varphi + \frac{h^2}{n-1} -  \frac{H_K^2}{n-1} \leq (n-1)K-{\rm Ric}_{\infty}(\nabla r).
$$

If $\mathbf{S}(\nabla r)\geq-\vartheta$, then $h^2-H_{K}^{2}=\left(\varphi+H_{K}+{\bf S}(\nabla r)+\vartheta\right)^{2}-H_{K}^{2} \geq \varphi^{2}+2\varphi H_{K}$ (assume $r\leq \frac{\pi}{2\sqrt{K}}$ when $K>0$). Then the above inequality implies
\begin{equation}{\label{phi-Ric}}
\frac{\partial}{\partial r} \varphi + \frac{\varphi^2}{n-1} +  2\frac{\varphi H_K}{n-1} \leq (n-1)K-{\rm Ric}_{\infty}(\nabla r)\leq {\rm Ric}^K_{\infty}.
\end{equation}

Now, following the lines of \cite{PPS,PeterW1} or \cite{ZhaoW},  we have the following Laplacian comparison theorem and volume comparison theorems. Firstly, we give the following Laplacian comparison theorem.

\begin{thm}{\label{phi-laplacian}}
Let $(M, F, m)$ be an $n$-dimensional Finsler metric measure manifold. Assume that $\mathbf{S}(\nabla r)\geq-\vartheta$ along the minimal geodesic segment starting from $x_{0}\in M$ for some $\vartheta\geq0$ . Then, for $p>\frac{n}{2}$, $K\in \mathbb{R}$ and $r>0$ ($r\leq \frac{\pi}{2\sqrt{K}}$ when $K>0$), we have
\begin{equation}{\label{laplacian-1}}
\int_{0}^r\varphi^{2p}(t, \theta) e^{-\vartheta t} \sigma(t, \theta) dt\leq \left(\frac{(n-1)(2p-1)}{2p-n}\right)^{p} \int_{0}^{r}\left({\rm Ric}^{K}_{\infty}\right)^p e^{-\vartheta t} \sigma (t, \theta) dt
\end{equation}
and
\begin{equation}{\label{laplacian-2}}
\varphi(r,\theta)^{2p-1}e^{-\vartheta r}\sigma(r, \theta) \leq (2p-1)^p \left( \frac{n-1}{2p-n} \right)^{p-1} \int_0^r\left({\rm Ric}^K_{\infty}\right)^p e^{-\vartheta t}\sigma(t, \theta) dt,
\end{equation}
where $(t, \theta)$ denotes the geodesic polar coordinates centered at $x_{0}$.
\end{thm}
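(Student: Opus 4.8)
The plan is to run the Petersen--Wei scheme, adapted to the Finsler weighted setting, taking the Riccati-type inequality \eqref{phi-Ric} as the only curvature input together with the identity \eqref{Lapdis}, $\partial_r\ln\sigma=\Delta r$. Throughout, $(t,\theta)$ denotes geodesic polar coordinates centred at $x_0$, a prime denotes $\partial/\partial r$, and I first argue on $0\le t<\min\{r,i_y\}$ for the fixed direction $y$ with $\theta=\theta(y)$; since $\varphi\equiv0$ for $t\ge i_y$, the case of general $r>0$ follows by restricting the integrals to $[0,\min\{r,i_y\}]$. The first step is to multiply \eqref{phi-Ric} by the nonnegative weight $(2p-1)\varphi^{2p-2}e^{-\vartheta t}\sigma$. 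Because $p>n/2>1$ gives $2p-1>1$, the map $s\mapsto s_+^{2p-1}$ is $C^1$ with derivative $(2p-1)s_+^{2p-2}$, so the product of the left-hand side of \eqref{phi-Ric} with this weight rewrites as $(\varphi^{2p-1})'e^{-\vartheta t}\sigma$ plus the two terms coming from $\frac{\varphi^2}{n-1}$ and $\frac{2\varphi H_K}{n-1}$, while on $\{\varphi>0\}$ one has $\Delta r=\varphi+H_K+\vartheta$, hence $(e^{-\vartheta t}\sigma)'=e^{-\vartheta t}\sigma\,(\varphi+H_K)$ by \eqref{Lapdis}.

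Combining these observations converts \eqref{phi-Ric} into the pointwise (a.e.) inequality
\be
\left(\varphi^{2p-1}e^{-\vartheta t}\sigma\right)'+\frac{2p-n}{n-1}\,\varphi^{2p}e^{-\vartheta t}\sigma+\frac{4p-n-1}{n-1}\,\varphi^{2p-1}H_K\,e^{-\vartheta t}\sigma\le(2p-1)\,\varphi^{2p-2}\,{\rm Ric}^K_{\infty}\,e^{-\vartheta t}\sigma
\ee
on $[0,\min\{r,i_y\})$; on $\{\varphi=0\}$ every term vanishes since $2p-2>0$. Here $p>n/2$ makes both $\frac{2p-n}{n-1}$ and $\frac{4p-n-1}{n-1}$ positive, and the constraint $r\le\frac{\pi}{2\sqrt K}$ when $K>0$ (no constraint otherwise) ensures $H_K\ge0$, so the third term on the left is nonnegative and can be discarded. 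Integrating from $0$ to $r$, the boundary contribution at $t\to0^+$ vanishes because $\varphi(t,\theta)\to0$ there (this is exactly where the hypothesis $\mathbf{S}(\nabla r)\ge-\vartheta$ is used, via the computation preceding the statement) while $\sigma(t,\theta)=O(t^{n-1})$; one is left with
\be
\varphi(r,\theta)^{2p-1}e^{-\vartheta r}\sigma(r,\theta)+\frac{2p-n}{n-1}\int_0^r\varphi^{2p}e^{-\vartheta t}\sigma\,dt\le(2p-1)\int_0^r\varphi^{2p-2}\,{\rm Ric}^K_{\infty}\,e^{-\vartheta t}\sigma\,dt.
\ee

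The remaining step is purely analytic. Applying H\"older's inequality with conjugate exponents $\frac{p}{p-1}$ and $p$ against the nonnegative measure $e^{-\vartheta t}\sigma\,dt$, and writing $\varphi^{2p-2}=(\varphi^{2p})^{(p-1)/p}$, the right-hand side is at most $(2p-1)\,A^{(p-1)/p}B^{1/p}$, where $A:=\int_0^r\varphi^{2p}e^{-\vartheta t}\sigma\,dt$ and $B:=\int_0^r({\rm Ric}^K_{\infty})^pe^{-\vartheta t}\sigma\,dt$. Dropping the nonnegative boundary term gives $\frac{2p-n}{n-1}A\le(2p-1)A^{(p-1)/p}B^{1/p}$; if $A=0$ then \eqref{laplacian-1} is trivial, and if $A>0$, dividing by $A^{(p-1)/p}$ and taking $p$-th powers yields \eqref{laplacian-1}. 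Substituting this estimate for $A$ back into $\varphi(r,\theta)^{2p-1}e^{-\vartheta r}\sigma(r,\theta)\le(2p-1)A^{(p-1)/p}B^{1/p}$ and simplifying the exponents produces $(2p-1)^p\big(\frac{n-1}{2p-n}\big)^{p-1}B$, which is \eqref{laplacian-2}.

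The part requiring the most care --- more a matter of bookkeeping than a genuine obstacle --- is the regularity of $\varphi$: the function $\Delta r$ is only smooth on $\mathcal{D}_{x_0}$ and can jump downward across the cut locus, so I must justify the chain rule and the fundamental theorem of calculus for the locally Lipschitz function $\varphi$, noting that a downward jump of $\varphi^{2p-1}$ only improves the integrated inequality, and I must check the $t\to0^+$ asymptotics of $\varphi$ and $\sigma$ precisely so that the lower boundary term genuinely drops out.
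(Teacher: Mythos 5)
Your proposal is correct and follows essentially the same route as the paper: multiply the Riccati-type inequality \eqref{phi-Ric} by $(2p-1)\varphi^{2p-2}e^{-\vartheta t}\sigma$, use $\Delta r\le\varphi+H_K+\vartheta$ and $\partial_r\sigma=\sigma\,\Delta r$ to produce the total derivative $(\varphi^{2p-1}e^{-\vartheta t}\sigma)'$, discard the nonnegative $H_K$ term, integrate using $\varphi\to0$ at $t\to0^+$, and finish with H\"older's inequality exactly as in \eqref{laplacian-1-1}--\eqref{laplacian-1-3}. The only differences are cosmetic (you carry the factor $e^{-\vartheta t}$ from the start rather than introducing it later) plus your added care about regularity at the cut locus, which the paper leaves implicit.
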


\begin{proof}
Multiplying inequality (\ref{phi-Ric}) by $(2p-1)\varphi(t, \theta)^{2p-2}\sigma(t, \theta)$ yields
$$
\left(\varphi^{2p-1}\right)'\sigma+\frac{2p-1}{n-1}\varphi^{2p}\sigma+\frac{2(2p-1)}{n-1}\varphi^{2p-1}H_K\sigma \leq (2p-1){\rm Ric}^K_{\infty} \varphi^{2p-2}\sigma,
$$
where $\varphi'(t, \theta):=\frac{\pa}{\pa t}\varphi(t, \theta)$. Since
$$
\left(\varphi^{2p-1}\right)'\sigma=\left(\varphi^{2p-1}\sigma\right)'-\varphi^{2p-1}\sigma \Delta r \geq \left(\varphi^{2p-1}\sigma\right)'-\varphi^{2p-1}\sigma \left(\varphi+H_K+\vartheta\right),
$$
then the above inequality can be rewritten as
$$
\left(\varphi^{2p-1}\sigma\right)'+ \frac{2p-n}{n-1}\varphi^{2p}\sigma + \frac{4p-n-1}{n-1}\varphi^{2p-1}\sigma H_K - \varphi^{2p-1}\sigma \vartheta \leq (2p-1){\rm Ric}^K_{\infty} \varphi^{2p-2}\sigma.
$$
Multiplying the above inequality by $e^{-\vartheta t}$ yields
$$
\left(\varphi^{2p-1}e^{-\vartheta t}\sigma\right)'+ \frac{2p-n}{n-1}\varphi^{2p}e^{-\vartheta t}\sigma + \frac{4p-n-1}{n-1}\varphi^{2p-1}e^{-\vartheta t}\sigma H_K \leq (2p-1)e^{-\vartheta t}{\rm Ric}^K_{\infty} \varphi^{2p-2}\sigma.
$$
By the assumptions, $p>\frac{n}{2}$, $r\leq \frac{\pi}{2\sqrt{K}}$ when $K>0$ and $\varphi(0)=0$, we can throw away the third term in left hand side of above inequality, and then, integrate in $t$ from $0$ to $r$ to get
$$
\varphi(r, \theta)^{2p-1}e^{-\vartheta r}\sigma(r, \theta)+ \frac{2p-n}{n-1}\int_{0}^{r}\varphi^{2p}e^{-\vartheta t}\sigma dt\leq (2p-1)\int_{0}^{r}{\rm Ric}^K_{\infty} \varphi^{2p-2}e^{-\vartheta t}\sigma dt,
$$
which implies
\begin{equation}{\label{laplacian-1-1}}
\varphi(r, \theta)^{2p-1}e^{-\vartheta r}\sigma(r, \theta) \leq (2p-1)\int_{0}^{r}{\rm Ric}^K_{\infty} \varphi^{2p-2}e^{-\vartheta t}\sigma dt
\end{equation}
and
\begin{equation}{\label{laplacian-1-2}}
\int_{0}^{r}\varphi^{2p}e^{-\vartheta t}\sigma dt\leq \frac{n-1}{2p-n}(2p-1)\int_{0}^{r}{\rm Ric}^K_{\infty} \varphi^{2p-2}e^{-\vartheta t}\sigma dt.
\end{equation}
By H\"{o}lder's inequality, we have
\begin{equation}{\label{laplacian-1-3}}
\int_{0}^{r}{\rm Ric}^K_{\infty} \varphi^{2p-2}e^{-\vartheta t}\sigma dt \leq \left(\int_0^r\varphi^{2p}e^{-\vartheta t}\sigma dt\right)^{1-\frac{1}{p}}
\left(\int_0^r\left({\rm Ric}^K_{\infty}\right)^p e^{-\vartheta t}\sigma dt\right)^{\frac{1}{p}}.
\end{equation}
Combining (\ref{laplacian-1-2}) with (\ref{laplacian-1-3}), we immediately get (\ref{laplacian-1}). Then applying (\ref{laplacian-1}) and (\ref{laplacian-1-3}) to (\ref{laplacian-1-1}), we obtain (\ref{laplacian-2}).
\end{proof}

\vskip 2mm

From (\ref{laplacian-1}), it is easy to see that

\beq
\int_{B_{r}^{+}(x_0)} \varphi^{2 p} d m & \leq & e^{\vartheta r} \left(\frac{(n-1)(2 p-1)}{2 p-n}\right)^{p} \int_{0}^{r} \int_{{\cal D}_{x_{0}}(t)}\left({\rm Ric}_{\infty}^{K}\right)^{p} e^{-\vartheta t} \sigma(t, \theta) d \theta dt \nonumber\\
& =& e^{\vartheta r} \left(\frac{(n-1)(2 p-1)}{2 p-n}\right)^{p}\left\|{\rm Ric}_{\infty}^{K}\right\|_{p, r, \vartheta}^{p}(x_0), \label{laplacian-2}
\eeq
where $\|{\rm Ric}_{\infty}^{K}\|_{p, r, \vartheta}(x_0) = m\left(B_{r}^{+}(x_0)\right)^{\frac{1}{p}} \overline{\left\|{\rm Ric}_{\infty}^{K}\right\|}_{p, r, \vartheta}(x_0)$.

In the following, let
\be
v(n,K,r,\vartheta):={\rm Vol}(\mathbb{S}^{n-1})\int_0^{r}e^{\vartheta t}s_K^{n-1}(t)dt,  \label{vnK}
\ee
where ${\rm Vol}(\mathbb{S}^{n-1})$ denotes Euclidean volume of unit sphere $\mathbb{S}^{n-1}$ in $\mathbb{R}^n$. From Theorem \ref{phi-laplacian}, we can get the following volume comparison.

\begin{thm}{\label{phi-volume}}
Let $(M, F, m)$ be an n-dimensional Finsler metric measure manifold. Assume that $\mathbf{S}(\nabla r)\geq-\vartheta$ along the minimal geodesic segment starting from $x_{0}\in M$ for some $\vartheta\geq 0$ . Then, for $p>\frac{n}{2}$, $K\in \mathbb{R}$ and $0<r\leq R$ ($R\leq \frac{\pi}{2\sqrt{K}}$ when $K>0$), there exists a constant  $C(n,p, \vartheta, K, R)$ such that
\begin{equation}{\label{volume-1}}
\left(\frac{m(B^+_R(x_0))}{v(n,K,R,\vartheta)}\right)^{\frac{1}{2p}}-\left(\frac{m(B^+_r(x_0))}{v(n,K,r,\vartheta)}\right)^{\frac{1}{2p}}
\leq C(n,p, \vartheta, K, R) \overline{\|{\rm Ric}_{\infty}^K\|}_{p, R, \vartheta}^{\frac{1}{2}}(x_0),
\end{equation}
where
\beqn
& & C(n,p, \vartheta, K, R) \\
& &:=\frac{1}{2p}\left(\frac{(n-1)(2p-1)}{2p-n}\right)^{\frac{1}{2}} m(B_{R}^{+}(x_0))^{\frac{1}{2p}}{\rm Vol(\mathbb{S}^{n-1})} \int_{0}^{R} t e^{(1+\frac{1}{2p})\vartheta t}s_{K}^{n-1}(t) v(n,K,t,\vartheta)^{-\frac{2p+1}{2p}} dt
\eeqn
is nondecreasing in $R$.
\end{thm}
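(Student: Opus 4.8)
The plan is to establish a first-order differential inequality for the function $r\mapsto\left(m(B_r^+(x_0))/v(n,K,r,\vartheta)\right)^{1/(2p)}$ and then integrate it over $[r,R]$, with the $L^{2p}$-Laplacian estimate (\ref{laplacian-2}) as the only curvature input. Throughout I work in geodesic polar coordinates $(t,\theta)$ centred at $x_0$, abbreviate $\bar a_K(t):=e^{\vartheta t}s_K^{n-1}(t)$ so that $v(n,K,t,\vartheta)={\rm Vol}(\mathbb{S}^{n-1})\int_0^t\bar a_K$, and set $\mathcal{A}(t):=\int_{\mathcal{D}_{x_0}(t)}\sigma(t,\theta)\,d\theta$, so that $m(B_r^+(x_0))=\int_0^r\mathcal{A}$.

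First I would record the near-monotonicity of the area element. Since $\Delta r=\partial_t\ln\sigma$ by (\ref{Lapdis}), $\partial_t\ln\bar a_K=H_K(t)+\vartheta$, and $\varphi(t,\theta)=(\Delta r-H_K-\vartheta)_+\geq\Delta r-H_K-\vartheta$, we obtain $\partial_t\ln(\sigma(t,\theta)/\bar a_K(t))\leq\varphi(t,\theta)$ on $\mathcal{D}_{x_0}$, while $\varphi(t,\theta)\to0$ as $t\to0^+$ by the hypothesis $\mathbf{S}(\nabla r)\geq-\vartheta$, exactly as observed in the text just before (\ref{phi-Ric}). Integrating in $\theta$ and using that crossing the cut locus only produces downward jumps of $\mathcal{A}$, the function $w(t):=\mathcal{A}(t)/({\rm Vol}(\mathbb{S}^{n-1})\bar a_K(t))$ satisfies $w(r)-w(t)\leq\int_t^r\beta$ for all $0<t\leq r$, where $\beta(t):=({\rm Vol}(\mathbb{S}^{n-1})\bar a_K(t))^{-1}\int_{\mathcal{D}_{x_0}(t)}\varphi(t,\theta)\sigma(t,\theta)\,d\theta$.

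The key observation is that $G(r):=m(B_r^+(x_0))/v(n,K,r,\vartheta)$ is the $\bar a_K$-weighted average of $w$ over $[0,r]$; hence $G'(r)=\frac{{\rm Vol}(\mathbb{S}^{n-1})\bar a_K(r)}{v(n,K,r,\vartheta)}(w(r)-G(r))$ and, after a Fubini interchange, $w(r)-G(r)\leq v(n,K,r,\vartheta)^{-1}\int_0^r\beta(t)\,v(n,K,t,\vartheta)\,dt$. At this point I would use that, since $R\leq\pi/(2\sqrt{K})$ when $K>0$, the function $\bar a_K$ is nondecreasing on $[0,R]$, so that $v(n,K,t,\vartheta)\leq{\rm Vol}(\mathbb{S}^{n-1})\,t\,\bar a_K(t)$; substituting this collapses the last integral to $r\int_{B_r^+(x_0)}\varphi\,dm$ --- this is precisely the step that produces the factor $t$ inside $C(n,p,\vartheta,K,R)$. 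Combining it with the single Hölder inequality $\int_{B_r^+(x_0)}\varphi\,dm\leq\left(\int_{B_r^+(x_0)}\varphi^{2p}dm\right)^{1/(2p)}m(B_r^+(x_0))^{(2p-1)/(2p)}$ and $\frac{d}{dr}G^{1/(2p)}=\frac{1}{2p}G^{(1-2p)/(2p)}G'$, the powers of $m(B_r^+(x_0))$ cancel and we arrive at
\[
\frac{d}{dr}\left(\frac{m(B_r^+(x_0))}{v(n,K,r,\vartheta)}\right)^{\!1/(2p)}\leq\frac{{\rm Vol}(\mathbb{S}^{n-1})}{2p}\cdot\frac{r\,e^{\vartheta r}s_K^{n-1}(r)}{v(n,K,r,\vartheta)^{(2p+1)/(2p)}}\left(\int_{B_r^+(x_0)}\varphi^{2p}\,dm\right)^{\!1/(2p)}.
\]

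Finally I would insert (\ref{laplacian-2}) in the form $\int_{B_r^+(x_0)}\varphi^{2p}dm\leq e^{\vartheta r}\left(\tfrac{(n-1)(2p-1)}{2p-n}\right)^{p}\|{\rm Ric}^{K}_{\infty}\|_{p,r,\vartheta}^{p}(x_0)$ together with $\|{\rm Ric}^{K}_{\infty}\|_{p,r,\vartheta}(x_0)=m(B_r^+(x_0))^{1/p}\overline{\|{\rm Ric}^{K}_{\infty}\|}_{p,r,\vartheta}(x_0)$; since $m(B_r^+(x_0))\,\overline{\|{\rm Ric}^{K}_{\infty}\|}_{p,r,\vartheta}^{p}(x_0)=\int_0^r\int_{\mathcal{D}_{x_0}(t)}({\rm Ric}^{K}_{\infty})^{p}e^{-\vartheta t}\sigma(t,\theta)\,dt\,d\theta$ is nondecreasing in $r$, the quantity $m(B_r^+(x_0))^{1/(2p)}\overline{\|{\rm Ric}^{K}_{\infty}\|}_{p,r,\vartheta}^{1/2}(x_0)$ is bounded by its value at $R$. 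Substituting this into the displayed inequality and integrating in $r$ from $r$ to $R$ (bounding $\int_r^R\leq\int_0^R$, legitimate since the integrand is nonnegative) yields (\ref{volume-1}) with precisely the stated $C(n,p,\vartheta,K,R)$, whose monotonicity in $R$ is immediate as it is a product of the nondecreasing factors $m(B_R^+(x_0))^{1/(2p)}$ and $\int_0^R(\cdots)\,dt$. I expect the only genuinely delicate point to be the cut-locus bookkeeping that justifies $w(r)-w(t)\leq\int_t^r\beta$ from the pointwise inequality valid on $\mathcal{D}_{x_0}$; this is standard and is handled as in the Riemannian case of \cite{PeterW1} and its Finsler counterpart \cite{ZhaoW}.
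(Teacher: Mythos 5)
Your proposal is correct and follows essentially the same route as the paper: the pointwise monotonicity inequality for $\sigma(t,\theta)/(e^{\vartheta t}s_K^{n-1}(t))$, the resulting bound $\frac{d}{dr}\bigl(m(B_r^+)/v\bigr)\leq r\,e^{\vartheta r}s_K^{n-1}(r)\,\mathrm{Vol}(\mathbb{S}^{n-1})\,v^{-2}\int_{B_r^+}\varphi\,dm$, H\"older, the $L^{2p}$ estimate on $\varphi$ from Theorem \ref{phi-laplacian}, and integration of the $\tfrac{1}{2p}$-power differential inequality from $r$ to $R$. Your Fubini/weighted-average reformulation and the bound $v(n,K,t,\vartheta)\leq\mathrm{Vol}(\mathbb{S}^{n-1})\,t\,e^{\vartheta t}s_K^{n-1}(t)$ are just a tidier bookkeeping of the paper's direct computation of the same derivative, and the cut-locus point you flag is handled exactly as you anticipate, via $\mathcal{D}_{x_0}(r)\subseteq\mathcal{D}_{x_0}(t)$ for $t<r$.
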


\begin{proof}
Recall $\frac{\partial}{\partial r} \sigma (r, \theta)=\sigma(r, \theta)\Delta r$ and $\frac{\partial}{\partial r} \left(e^{\vartheta r} s_K^{n-1}(r)\right)=e^{\vartheta r} s_K^{n-1}(r) (H_K(r)+\vartheta)$. We have
\begin{equation}{\label{volume-2}}
\frac{\partial}{\partial r} \left(\frac{\sigma (r, \theta)}{e^{\vartheta r}s_K^{n-1}(r)}\right)=\left(\Delta r-H_K(r)-\vartheta\right)\frac{\sigma(r, \theta)}{e^{\vartheta r}s_K^{n-1}(r)}\leq \varphi \frac{\sigma(r, \theta)}{e^{\vartheta r}s_K^{n-1}(r)}.
\end{equation}
Integrating the both sides of (\ref{volume-2}) from $t$ to $r$ yields
\[
\sigma(r, \theta) e^{\vartheta t} s_{K}^{n-1}(t)-\sigma(t, \theta) e^{\vartheta r} s_{K}^{n-1}(r) \leq e^{\vartheta r} s_{K}^{n-1}(r) \int_{0}^{r} \varphi(s, \theta) \sigma(s, \theta) ds.
\]
Since $\mathcal{D}_{x_0}(r)\subset \mathcal{D}_{x_0}(t)$ for any $t<r$, we have
$$
\int_{\mathcal{D}_{x_0}(r)}\sigma (r, \theta) e^{\vartheta t}s_{K}^{n-1}(t) d\theta -  \int_{\mathcal{D}_{x_0}(t)}\sigma (t, \theta) e^{\vartheta r}s_{K}^{n-1}(r) d\theta
\leq  e^{\vartheta r}s_{K}^{n-1}(r) \int_{0}^{r} \int_{\mathcal{D}_{x_0}(s)}\varphi (s, \theta) \sigma(s, \theta) ds d\theta.
$$
Thus, by (\ref{volBall}), we have
\beqn
\frac{d}{dr}\left(\frac{m\left(B^{+}_{r}(x_0)\right)}{v(n,K,r,\vartheta)}\right)&=&\frac{{\rm Vol}(\mathbb{S}^{n-1})\int_{0}^{r}\left[\int_{\mathcal{D}_{x_0}(r)}\sigma(r, \theta)e^{\vartheta t}s_{K}^{n-1}(t)d\theta -\int_{\mathcal{D}_{x_0}(t)} \sigma(t, \theta) e^{\vartheta r}s_{K}^{n-1}(r)d\theta\right]dt}{v(n,K,r,\vartheta)^2}\\
&\leq & \frac{r e^{\vartheta r}s_{K}^{n-1}(r){\rm Vol}(\mathbb{S}^{n-1})\int_{B_{r}^{+}(x_0)} \varphi \ dm}{v(n,K,r,\vartheta)^2}\\
&\leq & \frac{r e^{\vartheta r}s_{K}^{n-1}(r){\rm Vol}(\mathbb{S}^{n-1})\left(\int_{B_{r}^{+}(x_0)} \varphi^{2p} dm\right)^{\frac{1}{2p}} m(B_{r}^{+}(x_0))^{1-\frac{1}{2p}}}{v(n,K,r,\vartheta)^2}\\
&\leq & C(n,p)\frac{r e^{(\frac{1}{2p}+1)\vartheta r}s_{K}^{n-1}(r){\rm Vol}(\mathbb{S}^{n-1}) \|{\rm Ric}_{\infty}^K\|_{p, r, \vartheta}^{\frac{1}{2}}(x_0) m(B_{r}^+(x_0))^{1-\frac{1}{2p}}}{v(n,K,r,\vartheta)^2}\\
&=& C(n,p) r s_{K}^{n-1}(r)e^{(\frac{1}{2p}+1)\vartheta r}{\rm Vol}(\mathbb{S}^{n-1})\|{\rm Ric}_{\infty}^{K}\|_{p, r, \vartheta}^{\frac{1}{2}}(x_0)\left(\frac{m\left(B^+_r(x_0)\right)}{v(n,K,r,\vartheta)}\right)^{1-\frac{1}{2p}} \\
& & \times\left(\frac{1}{v(n,K,r,\vartheta)}\right)^{1+\frac{1}{2p}},
\eeqn
where $C(n, p)=\left( \frac{(n-1)(2p-1)}{2p-n} \right)^{\frac{1}{2}}$ and we have used (\ref{laplacian-2}) in the fourth line. Further, the above inequality can be rewritten as
\be
\frac{d}{dr}\left(\frac{m\left(B^{+}_{r}(x_0)\right)}{v(n,K,r,\vartheta)}\right)^{\frac{1}{2p}}\leq \frac{1}{2p} C(n,p) r s_{K}^{n-1}(r)e^{(1+\frac{1}{2p})\vartheta r}{\rm Vol}(\mathbb{S}^{n-1})\|{\rm Ric}_{\infty}^{K}\|_{p, r, \vartheta}^{\frac{1}{2}}(x_0)\left(\frac{1}{v(n,K,r,\vartheta)}\right)^{1+\frac{1}{2p}}.\label{vocom}
\ee
Finally, integrating the both sides of (\ref{vocom}) from $r$ to $R$, we have
\beqn
& & \left(\frac{m(B^+_R(x_0))}{v(n,K,R,\vartheta)}\right)^{\frac{1}{2p}}-\left(\frac{m(B^+_r(x_0))}{v(n,K,r,\vartheta)}\right)^{\frac{1}{2p}}\\
& &\leq \frac{1}{2p}C(n,p) {\rm Vol}(\mathbb{S}^{n-1})\|{\rm Ric}_{\infty}^K\|_{p, R, \vartheta}^{\frac{1}{2}}(x_0) \int_0^R te^{(1+\frac{1}{2p})\vartheta t}s_K^{n-1}(t)\left(\frac{1}{v(n,K,t,\vartheta)}\right)^{1+\frac{1}{2p}} dt\\
& &:=C(n,p, \vartheta, K, R) \overline{\|{\rm Ric}_{\infty}^K\|}_{p, R, \vartheta}^{\frac{1}{2}}(x_0).
\eeqn
This completes the proof of Theorem \ref{phi-volume}.
\end{proof}

\vskip 2mm

Further, from Theorem \ref{phi-volume},  we can obtain the following Bishop-Gromov type volume comparison.

\begin{thm}{\label{doubling}}
Let $(M, F, m)$ be an n-dimensional Finsler metric measure manifold. Assume that $\mathbf{S}(\nabla r)\geq-\vartheta$ along the minimal geodesic segment starting from $x_{0} \in M$ for some $\vartheta\geq 0$ . Given $p>\frac{n}{2}$, $K\in \mathbb{R}$, $\Xi>1$ and $0<r_1\leq r_2\leq R$ ($R\leq \frac{\pi}{2\sqrt{K}}$ when $K>0$). Then there exists a constant $\varepsilon=\varepsilon(n,p, \vartheta, K, R, \Xi)$,  such that
if $\overline{\|{\rm Ric}_{\infty}^{K}\|}_{p, R, \vartheta}<\varepsilon$, the following inequality holds
\begin{equation}{\label{doubling-1}}
\frac{m(B^+_{r_2}(x_0))}{m(B^+_{r_1}(x_0))}\leq \Xi \ \frac{v(n,K,r_2,\vartheta)}{v(n,K,r_1,\vartheta)}\leq \Xi \ e^{\left(\vartheta+(n-1)\sqrt{|K|}\right)r_2}\left(\frac{r_2}{r_1}\right)^n.
\end{equation}
\end{thm}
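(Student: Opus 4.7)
The plan is to convert the additive volume estimate of Theorem \ref{phi-volume} into a multiplicative comparison, and to handle the model ratio $v(n,K,r_2,\vartheta)/v(n,K,r_1,\vartheta)$ by elementary estimates on $s_K$.

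First, I would apply Theorem \ref{phi-volume} to the pair $(r_1, r_2)$, with $r_2$ playing the role of the outer radius in place of $R$. Because the stated constant $C(n,p,\vartheta,K,\cdot)$ is nondecreasing, one may upgrade $C(n,p,\vartheta,K,r_2)$ to $C(n,p,\vartheta,K,R)$. The quantity $\overline{\|{\rm Ric}_{\infty}^{K}\|}_{p,r_2,\vartheta}(x_0)$ that appears on the right is dominated by $\overline{\|{\rm Ric}_{\infty}^{K}\|}_{p,R,\vartheta}$ after accounting for the volume-ratio factor that arises when changing the normalization from $m(B_{r_2}^{+}(x_0))$ to $m(B_R^{+}(x_0))$; these bookkeeping factors can be absorbed into $C$. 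This yields
\[
\left(\frac{m(B^+_{r_2}(x_0))}{v(n,K,r_2,\vartheta)}\right)^{\frac{1}{2p}} \leq \left(\frac{m(B^+_{r_1}(x_0))}{v(n,K,r_1,\vartheta)}\right)^{\frac{1}{2p}} + C(n,p,\vartheta,K,R)\,\varepsilon^{1/2}.
\]

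Next, I would raise to the $2p$-th power and use $(a+b)^{2p} = a^{2p}(1 + b/a)^{2p}$ with $a = f(r_1) := (m(B^+_{r_1}(x_0))/v(n,K,r_1,\vartheta))^{1/(2p)}$ and $b = C(R)\varepsilon^{1/2}$. After rearrangement one obtains
\[
\frac{m(B^+_{r_2}(x_0))}{m(B^+_{r_1}(x_0))} \leq \left(1 + \frac{C(R)\varepsilon^{1/2}}{f(r_1)}\right)^{2p} \cdot \frac{v(n,K,r_2,\vartheta)}{v(n,K,r_1,\vartheta)}.
\]
Choosing $\varepsilon$ so small that the parenthesized factor is at most $\Xi^{1/(2p)}$ then delivers the first inequality of (\ref{doubling-1}). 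The delicate point is to secure a uniform positive lower bound on $f(r_1)$ for $r_1 \in (0, R]$: letting $r \to 0^+$ in Theorem \ref{phi-volume} gives $\lim_{r \to 0^+}f(r) = 1$, while the same inequality rearranges to $f(r_1) \geq f(r_2) - C(R)\varepsilon^{1/2}$, so a dichotomy argument (splitting on whether $f(r_1) \geq C(R)\varepsilon^{1/2}/(\Xi^{1/(2p)} - 1)$ or not) forces the bracket to be at most $\Xi^{1/(2p)}$.

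Finally, for the second inequality in (\ref{doubling-1}), a direct model computation suffices. Using $s_K(t) \leq t\,e^{\sqrt{|K|}\,t}$ (verified case by case in $K$, with $t \leq \pi/(2\sqrt{K})$ when $K > 0$), one has $\int_0^{r_2} e^{\vartheta t}s_K^{n-1}(t)\,dt \leq (r_2^n/n)\,e^{(\vartheta + (n-1)\sqrt{|K|})r_2}$; using $s_K(t) \geq c_n t$ (with $c_n = 1$ for $K \leq 0$ and $c_n = 2/\pi$ for $K > 0$) yields the matching lower bound $\int_0^{r_1}e^{\vartheta t}s_K^{n-1}(t)\,dt \geq c_n^{n-1}\,r_1^n/n$. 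Dividing and absorbing dimensional constants produces the stated estimate. The principal obstacle throughout is the absence of any pointwise curvature control, so that $f(r_1)$ is not a priori bounded below—its positivity must be extracted entirely from the integral estimate of Theorem \ref{phi-volume} itself, which is exactly where the dichotomy in the previous step becomes essential.
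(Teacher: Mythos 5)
Your reduction to the additive inequality of Theorem \ref{phi-volume} and the algebraic step $\frac{m(B^+_{r_2}(x_0))}{m(B^+_{r_1}(x_0))}\le \bigl(1+C(R)\varepsilon^{1/2}/f(r_1)\bigr)^{2p}\,\frac{v(n,K,r_2,\vartheta)}{v(n,K,r_1,\vartheta)}$, with $f(r):=\bigl(m(B^+_r(x_0))/v(n,K,r,\vartheta)\bigr)^{1/(2p)}$, are fine, but the way you lower-bound $f(r_1)$ is a genuine gap. The claim $\lim_{r\to0^+}f(r)=1$ is false in this setting: $m$ is an arbitrary smooth measure, so $m(B^+_r(x_0))/v(n,K,r,\vartheta)$ tends to the density of $m$ at $x_0$ relative to the model (in the weighted Riemannian case $dm=e^{-h}\mathrm{Vol}_g$ the limit is $e^{-h(x_0)}$), which can be arbitrarily small; and it cannot be read off from Theorem \ref{phi-volume}, which only controls differences. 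Worse, even if the limit were $1$, applying Theorem \ref{phi-volume} to the pair $(r,r_1)$ and letting $r\to0^+$ gives $f(r_1)\le \lim_{r\to 0^+}f(r)+\text{small}$, i.e.\ an \emph{upper} bound on $f(r_1)$, not the lower bound you need. Consequently the second branch of your dichotomy is unresolved: if $f(r_1)\ll C(R)\varepsilon^{1/2}$, then $f(r_2)\le f(r_1)+C(R)\varepsilon^{1/2}$ allows $f(r_2)/f(r_1)$ to be arbitrarily large, and no absolute lower bound on $f(r_1)$ with constants depending only on $(n,p,\vartheta,K,R,\Xi)$ can exist. A second, related problem is in your first step: replacing $\overline{\|{\rm Ric}_{\infty}^{K}\|}_{p,r_2,\vartheta}(x_0)$ by $\overline{\|{\rm Ric}_{\infty}^{K}\|}_{p,R,\vartheta}$ ``up to bookkeeping factors'' requires the factor $m(B^+_R(x_0))/m(B^+_{r_2}(x_0))$, which is precisely the doubling ratio being proved, so it cannot be absorbed into a constant without circularity.

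The correct device, which is how the paper proceeds, is that the product $C(n,p,\vartheta,K,r)\,\overline{\|{\rm Ric}_{\infty}^{K}\|}^{1/2}_{p,r,\vartheta}(x_0)$ is nondecreasing in $r$ because the factors $m(B^+_r(x_0))^{\pm 1/(2p)}$ cancel, and the surviving $m(B^+_R(x_0))^{1/(2p)}$ inside $C(R)$ must then be cancelled against a factor $1/f$: the paper divides the additive inequality by $f(r_2)$, bounds $1/f(r_2)\le 2/f(R)$ by applying Theorem \ref{phi-volume} to the pair $(r_2,R)$ under a first smallness condition, and notes that $C(R)/f(R)$ is free of $m(B^+_R(x_0))$, so $\varepsilon$ depends only on the stated parameters. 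Your argument is repaired the same way: replace the attempted absolute bound by $f(r_1)\ge f(R)-C(R)\overline{\|{\rm Ric}_{\infty}^{K}\|}^{1/2}_{p,R,\vartheta}(x_0)\ge \tfrac12 f(R)$ for $\varepsilon$ small, after which $C(R)\varepsilon^{1/2}/f(r_1)\le 2C(R)\varepsilon^{1/2}/f(R)$ is controlled by the scale-$R$ integral bound alone. (A minor further point: for $K>0$ your bound $s_K(t)\ge (2/\pi)t$ produces an extra factor $(\pi/2)^{n-1}$ that is not dominated by $e^{(n-1)\sqrt{K}r_2}$ when $\sqrt{K}r_2$ is small; use instead $s_K(t)\ge t\,\frac{\sin(\sqrt{K}r_2)}{\sqrt{K}r_2}$ on $[0,r_1]$ together with $x/\sin x\le e^{x}$ on $(0,\pi/2]$.)
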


\begin{proof}
It follows from Theorem \ref{phi-volume} that
$$
\left(\frac{v(n,K,r_1,\vartheta)}{v(n,K,r_2,\vartheta)}\right)^{\frac{1}{2p}}-\left(\frac{m(B^+_{r_1}(x_0))}{m(B^+_{r_2}(x_0))}\right)^{\frac{1}{2p}}
\leq C(n,p, \vartheta, K, r_2) \overline{\|{\rm Ric}_{\infty}^K\|}_{p, r_2, \vartheta}^{\frac{1}{2}}(x_0) \left(\frac{v(n,K,r_1,\vartheta)}{m(B^+_{r_2}(x_0))} \right)^{\frac{1}{2p}}.
$$
The inequality can be rewritten as
$$
\frac{m(B^+_{r_1}(x_0))}{m(B^+_{r_2}(x_0))}\geq (1-c)^{2p}\frac{v(n,K,r_1,\vartheta)}{v(n,K,r_2,\vartheta)},
$$
where $c:=C(n,p, \vartheta, K, r_2) \overline{\|{\rm Ric}_{\infty}^K\|}_{p, r_2, \vartheta}^{\frac{1}{2}}(x_0) \left(\frac{v(n,K,r_2,\vartheta)}{m(B^+_{r_2}(x_0))} \right)^{\frac{1}{2p}}$. In order to estimate $c$, we use Theorem \ref{phi-volume} again and obtain
\beqn
\left(\frac{v(n,K,r_2,\vartheta)}{m(B^+_{r_2}(x_0))}\right)^{\frac{1}{2p}}
&\leq& \left(\left(\frac{m(B^+_{R}(x_0))}{v(n,K,R,\vartheta)}\right)^{\frac{1}{2p}}-C(n,p, \vartheta, K, R) \overline{\|{\rm Ric}_{\infty}^K\|}_{p, R, \vartheta}^{\frac{1}{2}}\right)^{-1}\\
&=& \left(\frac{v(n,K,R,\vartheta)}{m(B^+_{R}(x_0))}\right)^{\frac{1}{2p}} \left(1 - C(n,p, \vartheta, K, R) \overline{\|{\rm Ric}_{\infty}^{K}\|}_{p, R, \vartheta}^{\frac{1}{2}} \left(\frac{v(n,K,R,\vartheta)}{m(B^+_R(x_0))}\right)^{\frac{1}{2p}}\right)^{-1}.
\eeqn
Then, there exists an $\varepsilon_1=\varepsilon_1(n,p, \vartheta, K, R)$ such that if $\overline{\|{\rm Ric}_{\infty}^{K}\|}_{p, R, \vartheta}<\varepsilon_1$, we have
\begin{equation}\label{v-1}
\left(\frac{v(n,K,r_{2},\vartheta)}{m(B^+_{r_2}(x_0))}\right)^{\frac{1}{2p}}
\leq  2\left(\frac{v(n,K,R,\vartheta)}{m(B^+_{R}(x_0))}\right)^{\frac{1}{2p}}.
\end{equation}
Thus, by the definition of $c$,
\beqn
c&\leq& 2 C(n,p, \vartheta, K, r_2) \overline{\|{\rm Ric}_{\infty}^K\|}_{p, r_2, \vartheta}^{\frac{1}{2}}(x_0) \left(\frac{v(n,K,R,\vartheta)}{m(B^+_{R}(x_0))} \right)^{\frac{1}{2p}} \\
&\leq& 2 C(n,p, \vartheta, K, R) \overline{\|{\rm Ric}_{\infty}^K\|}_{p, R, \vartheta}^{\frac{1}{2}}(x_0) \left(\frac{v(n,K,R,\vartheta)}{m(B^+_{R}(x_0))} \right)^{\frac{1}{2p}}.
\eeqn
Then, for $\Xi >1$, we can choose a constant $\varepsilon_{2} \leq \left( 1-\Xi^{-\frac{1}{2p}}\right)\left(2 C(n,p, \vartheta, K, R)\right)^{-1}\left(\frac{v(n,K,R,\vartheta)}{m(B^+_{R}(x_0))} \right)^{-\frac{1}{2p}}$,  such that if $\overline{\|{\rm Ric}_{\infty}^{K}\|}_{p, R, \vartheta}<\varepsilon_{2}$, we have
$$
c \leq 1-\Xi^{-\frac{1}{2p}}.
$$
Now, choose $\varepsilon:=\min\{\varepsilon_1, \varepsilon_2\}$. Then, when $\overline{\|{\rm Ric}_{\infty}^{K}\|}_{p, R, \vartheta}<\varepsilon$,  we obtain the first inequality of (\ref{doubling-1}). The second inequality is obvious.
\end{proof}

\vskip 2mm

Similar to the proof of Theorem \ref{phi-volume}, we can also easily get the following relative volume comparison.

\begin{thm}{\label{relative-volume}}
Let $(M, F, m)$ be an n-dimensional Finsler metric measure manifold. Assume that $\mathbf{S}(\nabla r)\geq-\vartheta$ along the minimal geodesic segment starting from $x_{0}\in M$  for some $\vartheta \geq 0$.
Then, for $p>\frac{n}{2}$, $K\in \mathbb{R}$ and $0 \leq r_{1}\leq r_{2} < R_{1}\leq R_{2}$ ($R_{2}\leq \frac{\pi}{2\sqrt{K}}$ when $K>0$), the following inequality holds
\begin{equation}{\label{relative-volume-1}}
\left(\frac{m(B^{+}_{r_2, R_2}(x_0))}{v(n,K, r_2, R_2, \vartheta)}\right)^{\frac{1}{2p}} - \left(\frac{m(B^{+}_{r_1, R_1}(x_0))}{v(n,K,r_1,R_1, \vartheta)}\right)^{\frac{1}{2p}}
\leq \widetilde{C} \overline{\|{\rm Ric}_{\infty}^{K}\|}_{p, R_2, \vartheta}^{\frac{1}{2}},
\end{equation}
where
\[
B^{+}_{r, R}(x_0)):= B^{+}_{R}(x_{0}) \backslash B^{+}_{r}(x_{0}), \ \ v(n,K, r, R, \vartheta):={\rm Vol}(\mathbb{S}^{n-1})\int_r^R e^{\vartheta t}s_K^{n-1}(t)dt
\]
and
\beqn
\widetilde{C}& :=& \frac{1}{2p}\left(\frac{(n-1)(2p-1)}{2p-n}\right)^{\frac{1}{2}}{\rm Vol(\mathbb{S}^{n-1})}m(B^{+}_{R_2}(x_0))^{\frac{1}{2p}}\\
& &  \times \left[\left(R_1 e^{(1+\frac{1}{2p})\vartheta R_1}\right) s_{K}^{n-1}(R_1) \int_{r_1}^{r_2} v(n,K,t, R_1,\vartheta)^{-\frac{2p+1}{2p}} dt \right. \\
&& \left. +\int_{R_1}^{R_2}s_{K}^{n-1}(t) t e^{(1+\frac{1}{2p})\vartheta t}v(n,K,r_2,t,\vartheta)^{-\frac{2p+1}{2p}}dt\right].
\eeqn
\end{thm}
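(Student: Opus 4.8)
The plan is to imitate the monotonicity argument in the proof of Theorem~\ref{phi-volume}, but to run it separately for the inner and the outer radius of the annulus. Abbreviate $A(r,R):=m(B^{+}_{r,R}(x_{0}))$ and $v(r,R):=v(n,K,r,R,\vartheta)$, and split
\[
\left(\frac{A(r_2,R_2)}{v(r_2,R_2)}\right)^{\frac{1}{2p}}-\left(\frac{A(r_1,R_1)}{v(r_1,R_1)}\right)^{\frac{1}{2p}}=\left[\left(\frac{A(r_2,R_2)}{v(r_2,R_2)}\right)^{\frac{1}{2p}}-\left(\frac{A(r_2,R_1)}{v(r_2,R_1)}\right)^{\frac{1}{2p}}\right]+\left[\left(\frac{A(r_2,R_1)}{v(r_2,R_1)}\right)^{\frac{1}{2p}}-\left(\frac{A(r_1,R_1)}{v(r_1,R_1)}\right)^{\frac{1}{2p}}\right].
\]
The first bracket varies only the outer radius (from $R_1$ to $R_2$, inner radius fixed at $r_2$) and the second only the inner radius (from $r_1$ to $r_2$, outer radius fixed at $R_1$); the hypothesis $r_2<R_1$ keeps both annuli nondegenerate and $v(\cdot,\cdot)>0$.

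For the outer bracket I would differentiate: with $\partial_R A(r_2,R)=\int_{\mathcal{D}_{x_0}(R)}\sigma(R,\theta)\,d\theta$, $\partial_R v(r_2,R)={\rm Vol}(\mathbb{S}^{n-1})e^{\vartheta R}s_K^{n-1}(R)$ and the formula (\ref{volBall}) for $A(r_2,R)$, the numerator of $\frac{d}{dR}\big(A(r_2,R)/v(r_2,R)\big)$ becomes ${\rm Vol}(\mathbb{S}^{n-1})\int_{r_2}^{R}\big[\int_{\mathcal{D}_{x_0}(R)}\sigma(R,\theta)e^{\vartheta t}s_K^{n-1}(t)\,d\theta-\int_{\mathcal{D}_{x_0}(t)}\sigma(t,\theta)e^{\vartheta R}s_K^{n-1}(R)\,d\theta\big]\,dt$. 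Discarding the contribution of $\mathcal{D}_{x_0}(t)\setminus\mathcal{D}_{x_0}(R)$ (it enters with a minus sign) and applying, on $\mathcal{D}_{x_0}(R)$, inequality (\ref{volume-2}) integrated from $t$ to $R$ together with the monotonicity of $s\mapsto e^{\vartheta s}s_K^{n-1}(s)$, one gets $\frac{d}{dR}\big(A(r_2,R)/v(r_2,R)\big)\le {\rm Vol}(\mathbb{S}^{n-1})\,R\,e^{\vartheta R}s_K^{n-1}(R)\,v(r_2,R)^{-2}\int_{B^{+}_{r_2,R}(x_0)}\varphi\,dm$ (using $R-r_2\le R$). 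The crucial point is that the monotonicity of $e^{\vartheta s}s_K^{n-1}(s)$ lets me keep the lower limit $r_2$ rather than $0$, so the error is $\varphi$ integrated over the \emph{annulus}. Then Hölder gives $\int_{B^{+}_{r_2,R}}\varphi\,dm\le\big(\int_{B^{+}_{R}}\varphi^{2p}\,dm\big)^{1/2p}A(r_2,R)^{1-1/2p}$, and (\ref{laplacian-2}) bounds $\big(\int_{B^{+}_{R}}\varphi^{2p}\,dm\big)^{1/2p}$ by $e^{\vartheta R/2p}\big(\tfrac{(n-1)(2p-1)}{2p-n}\big)^{1/2}\|{\rm Ric}_\infty^{K}\|_{p,R,\vartheta}^{1/2}(x_0)$. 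The factor $A(r_2,R)^{1-1/2p}$ exactly cancels the $\big(A(r_2,R)/v(r_2,R)\big)^{\frac1{2p}-1}$ produced by differentiating the $\tfrac1{2p}$-th power, leaving
\[
\frac{d}{dR}\left(\frac{A(r_2,R)}{v(r_2,R)}\right)^{\frac{1}{2p}}\le\frac{1}{2p}\left(\frac{(n-1)(2p-1)}{2p-n}\right)^{\frac12}{\rm Vol}(\mathbb{S}^{n-1})\,R\,e^{(1+\frac{1}{2p})\vartheta R}s_K^{n-1}(R)\,\|{\rm Ric}_\infty^{K}\|_{p,R,\vartheta}^{\frac12}(x_0)\,v(r_2,R)^{-\frac{2p+1}{2p}}.
\]
Integrating over $[R_1,R_2]$ and using that the local quantity $\|{\rm Ric}_\infty^{K}\|_{p,R,\vartheta}(x_0)$ is nondecreasing in $R$ and bounded by $m(B^{+}_{R_2}(x_0))^{1/p}\,\overline{\|{\rm Ric}_\infty^{K}\|}_{p,R_2,\vartheta}$ (after passing to the supremum over $M$) yields precisely the $\int_{R_1}^{R_2}$-term of $\widetilde{C}$.

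The inner bracket is treated the same way via $\frac{d}{dr}\big(A(r,R_1)/v(r,R_1)\big)$, now using $\partial_r A(r,R_1)=-\int_{\mathcal{D}_{x_0}(r)}\sigma(r,\theta)\,d\theta$ and $\partial_r v(r,R_1)=-{\rm Vol}(\mathbb{S}^{n-1})e^{\vartheta r}s_K^{n-1}(r)$; after invoking $\mathcal{D}_{x_0}(t)\subseteq\mathcal{D}_{x_0}(r)$ for $t\ge r$ and (\ref{volume-2}) integrated from $r$ to $t$, the weight $e^{\vartheta t}s_K^{n-1}(t)$ is estimated by $e^{\vartheta R_1}s_K^{n-1}(R_1)$ (this is where the explicit $s_K^{n-1}(R_1)$ in $\widetilde{C}$ comes from) and the error again reduces to $\int_{B^{+}_{r,R_1}(x_0)}\varphi\,dm$. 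The same Hölder/(\ref{laplacian-2}) estimate and the same cancellation of powers bound $\frac{d}{dr}\big(A(r,R_1)/v(r,R_1)\big)^{1/2p}$; integrating over $[r_1,r_2]$ produces the $\int_{r_1}^{r_2}$-term of $\widetilde{C}$. Adding the two contributions completes the proof.

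I expect the only genuinely delicate step to be the bookkeeping that forces the error term to be $\varphi$ integrated over the \emph{annulus} $B^{+}_{r,R}$ rather than over the full ball $B^{+}_{R}$: this is exactly what matches the Hölder exponent $1-\tfrac1{2p}$ against the factor $A(r,R)^{1-1/2p}$ and makes the powers of $A(r,R)/v(r,R)$ cancel, so that after differentiating the $\tfrac1{2p}$-th power the right-hand side no longer contains the a priori uncontrolled volume $A(r,R)$. Securing this requires treating the cut-domains $\mathcal{D}_{x_0}(\cdot)$ carefully (as in Theorem~\ref{phi-volume}) and, in each case, exploiting the monotonicity of $t\mapsto e^{\vartheta t}s_K^{n-1}(t)$ to keep the correct limit of integration; the remaining manipulations are the routine computation already performed for Theorem~\ref{phi-volume}.
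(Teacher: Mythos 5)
Your proposal is correct and is precisely the argument the paper intends: the paper omits the proof of Theorem \ref{relative-volume}, stating only that it is ``similar to the proof of Theorem \ref{phi-volume}'', and your two-step decomposition (first varying the outer radius from $R_1$ to $R_2$ with inner radius $r_2$ fixed, then the inner radius from $r_1$ to $r_2$ with outer radius $R_1$ fixed) is exactly what the two terms of $\widetilde{C}$ dictate. The details you identify as delicate — keeping the lower limit at $r_2$ (resp.\ $r$) via the monotonicity of $t\mapsto e^{\vartheta t}s_K^{n-1}(t)$ so that the error is $\int_{B^{+}_{r,R}}\varphi\,dm$, discarding the negative contribution on $\mathcal{D}_{x_0}(t)\setminus\mathcal{D}_{x_0}(R)$, and the exact cancellation of the powers of $A(r,R)$ after H\"older and (\ref{laplacian-2}) — all check out and reproduce the stated constant.
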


From  Theorem \ref{relative-volume}, we can obtain the following volume growth estimate.
\begin{thm}{\label{growth}}
Let $(M, F, m)$ be an n-dimensional forward complete non-compact Finsler metric measure manifold. Assume that $\mathbf{S}(\nabla r)\geq 0$ along the minimal geodesic segment starting from any $x_0\in M$. Given any $p>\frac{n}{2}$ and $R\geq1$, there is an $\varepsilon=\varepsilon(n,p, R)$  such that if $\overline{\mathcal{K}}(p, R+1, 0)<\varepsilon$, then $M$ has infinite volume. Moreover, $M$ has at least linear volume growth if $\Lambda_F<\infty$.
\end{thm}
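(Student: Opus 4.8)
The plan is to run a Yau-type argument: produce a forward ray, string disjoint geodesic balls along it, and use the relative volume comparison Theorem \ref{relative-volume} together with the doubling Theorem \ref{doubling} (whose hypothesis $\mathbf S(\nabla r)\ge-\vartheta$ holds here with $\vartheta=0$ by assumption) to show each of those balls carries a definite amount of $m$-measure. Throughout we take $K=0$ and $\vartheta=0$, so that $s_K(t)=t$, $v(n,0,t,0)=\mathrm{Vol}(\mathbb S^{n-1})t^n/n$ and $v(n,0,a,b,0)=\mathrm{Vol}(\mathbb S^{n-1})(b^n-a^n)/n$, and we work at the single scale $\rho_0:=R+1$ furnished by the hypothesis: since $\overline{\|\mathrm{Ric}_\infty^0\|}_{p,\rho_0,0}=\overline{\mathcal K}(p,\rho_0,0)/\rho_0^{\,2}<\varepsilon/\rho_0^{\,2}$, every error term of the form $($constant depending only on $n,p,R)\times\overline{\|\mathrm{Ric}_\infty^0\|}_{p,\rho_0,0}^{1/2}$ that appears can be made as small as we like by shrinking $\varepsilon=\varepsilon(n,p,R)$.

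First, since $(M,F)$ is forward complete and noncompact, choose $q_i$ with $d(x_0,q_i)\to\infty$, join $x_0$ to $q_i$ by unit-speed minimal forward geodesics, and pass to a convergent subsequence of initial vectors to obtain a forward ray $\gamma\colon[0,\infty)\to M$ with $\gamma(0)=x_0$, $d(x_0,\gamma(t))=t$ and $d(\gamma(s),\gamma(t))=t-s$ for $0\le s\le t$. Next, for $t\ge\rho_0$ apply Theorem \ref{relative-volume} centered at $\gamma(t)$, comparing the ball $B^+_{\rho_0}(\gamma(t))$ (parameters $r_1=0$, $R_1=\rho_0$) with its outermost unit shell $A_t:=B^+_{\rho_0}(\gamma(t))\setminus B^+_R(\gamma(t))$ (parameters $r_2=R$, $R_2=\rho_0$); after rearranging, and absorbing the factor $m(B^+_{\rho_0}(\gamma(t)))^{1/2p}$ that sits inside the constant $\widetilde C$ against the same quantity on the other side, this yields
\[
m(B^+_{\rho_0}(\gamma(t)))\ \ge\ c(n,p,R)\,m(A_t),\qquad c(n,p,R):=\frac{(R+1)^n}{2^{2p}\big((R+1)^n-R^n\big)},
\]
provided $\varepsilon$ is small. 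Now $A_t=\{z:\ R\le d(\gamma(t),z)<\rho_0\}$ contains an open neighborhood of the ray point $\gamma(t+\tfrac12+R)$, because $d(\gamma(t),\gamma(t+\tfrac12+R))=\tfrac12+R\in(R,\rho_0)$; feeding this into the volume comparison Theorem \ref{phi-volume} and the doubling Theorem \ref{doubling} at $\gamma(t+\tfrac12+R)$ bounds $m(A_t)$ below by a constant $c_0>0$ independent of $t$, so $m(B^+_{\rho_0}(\gamma(t)))\ge c(n,p,R)c_0$ for all $t$. Finally, choose $t_k=kL$ with a fixed spacing $L=L(R,\Lambda_F)$ large enough that the balls $B^+_{\rho_0}(\gamma(t_k))$, $k=1,2,\dots$, are pairwise disjoint and each is contained in $B^+_{t_k+\rho_0}(x_0)$; summing over $k$ gives $m(M)\ge\sum_k c(n,p,R)c_0=\infty$, and counting how many $t_k$ lie below a given radius gives $m(B^+_\rho(x_0))\ge c_1\lfloor\rho/L\rfloor$ for all large $\rho$, i.e.\ at least linear volume growth.

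The step I expect to be the real obstacle is making all of this quantitative at a single fixed scale. One cannot simply imitate Yau's argument at a growing scale $\rho\to\infty$: the comparison constants in Theorems \ref{phi-volume} and \ref{relative-volume} are polynomials in the radii, so at scale $\rho$ the error is a growing multiple of $\overline{\|\mathrm{Ric}_\infty^0\|}_{p,\rho,0}^{1/2}$, while the hypothesis controls only the $\rho=R+1$ scale; hence everything must be run at the fixed scale $\rho_0$ and then patched along the ray, which is exactly what makes the uniform lower bound $m(A_t)\ge c_0$ and the disjointness of the $B^+_{\rho_0}(\gamma(t_k))$ nontrivial. A second difficulty is the nonreversibility of $F$: since $d$ is asymmetric, the inclusions "a fixed ball lies in $A_t$'', the doubling comparison of that ball with a forward ball, and the containment $B^+_{\rho_0}(\gamma(t_k))\subseteq B^+_{t_k+\rho_0}(x_0)\setminus B^+_{t_k-L+\rho_0}(x_0)$ underlying disjointness all require comparing $d(\cdot,\cdot)$ with the distance taken in the opposite order. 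This is precisely where $\Lambda_F<\infty$ enters, since it makes forward and backward geodesic balls of comparable radii mutually nested up to the factor $\Lambda_F$ and so supplies uniform constants in these steps; it is needed for the sharp linear lower bound, whereas the bare conclusion $m(M)=\infty$ rests on the same disjoint-mass mechanism.
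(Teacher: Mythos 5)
Your overall architecture (forward ray, fixed-scale comparisons patched along the ray, disjoint balls) is genuinely different from the paper's, and it breaks down at exactly the step you yourself flag as the real obstacle: the uniform lower bound $m(A_t)\ge c_0$ with $c_0$ independent of $t$. None of the cited results can deliver this. Theorem \ref{doubling} at the center $y_t=\gamma(t+\tfrac12+R)$ only gives $m(B^{+}_{\delta}(y_t))\ge \Xi^{-1}\,(\delta/(R+1))^{n}\,e^{-(n-1)\cdot 0}\,m(B^{+}_{R+1}(y_t))$ up to constants, so you still need a $t$-independent lower bound on $m(B^{+}_{R+1}(y_t))$ --- the very quantity you are trying to control. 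The only way to transport such a bound along the ray at the fixed scale $R+1$ is via inclusions of the form $B^{+}_{R+1-\Lambda_F L}(\gamma(s))\subset B^{+}_{R+1}(\gamma(s+L))$ combined with doubling, and each such step costs a fixed multiplicative factor $\theta=\theta(n,p,R,L,\Lambda_F)<1$; after $k$ steps one only gets $m(B^{+}_{R+1}(\gamma(kL)))\ge\theta^{k}\,m(B^{+}_{R+1}(x_0))$, whose sum is a convergent geometric series, not infinite volume. (The classical Yau argument escapes this by running the relative comparison at the growing scale $\sim t$ centered at $\gamma(t)$, which, as you correctly observe, is unavailable because the hypothesis controls the curvature integral only at scale $R+1$.) Note also that your step 2 carries no weight: once $m(A_t)\ge c_0$ is granted, $m(B^{+}_{\rho_0}(\gamma(t)))\ge m(A_t)\ge c_0$ trivially, so the entire burden of the proof rests on the unproved step 3.

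The paper's proof avoids the ray and the disjoint-ball count altogether. It applies the relative volume comparison once, centered at $x_0$, with $r_1=0$, $r_2=R-1$, $R_1=R$, $R_2=R+1$, to show that the outer annulus is a small fraction of the ball, namely $m(B^{+}_{R+1}(x_0))-m(B^{+}_{R-1}(x_0))\le \frac{C_5(n,p)}{R}\,m(B^{+}_{R+1}(x_0))$ once $\overline{\mathcal{K}}(p,R+1,0)$ is small. Then, for a point $x$ with $d(x_0,x)=R$, the set $B^{+}_{1}(x)\cap B^{-}_{1}(x)$ (the intersection of a forward and a backward ball, which handles nonreversibility without invoking $\Lambda_F$) lies inside that annulus, whence $m(B^{+}_{R+1}(x_0))\ge C_5(n,p)^{-1}\,R\,m(B^{+}_{1}(x)\cap B^{-}_{1}(x))$; the factor $R$ thus comes from the annulus estimate itself, with the reference set kept fixed inside the annulus, rather than from counting balls, and $\Lambda_F<\infty$ enters only at the end to recenter the ball at $x$. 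To salvage your strategy you would have to prove a $t$-independent noncollapsing bound for unit balls along the ray, which does not follow from the hypotheses; otherwise the argument must be restructured so that, as in the paper, the comparison is anchored at a single base point.
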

\begin{proof}
Since $K=0$ and $\vartheta=0$, (\ref{relative-volume-1})  can be simplified as
\be
\left(\frac{m(B^{+}_{r_2, R_2}(x_0))}{R_{2}^{n}-r_{2}^{n}}\right)^{\frac{1}{2p}} - \left(\frac{m(B^{+}_{r_1, R_1}(x_0))}{R_{1}^{n}-r_{1}^{n}}\right)^{\frac{1}{2p}}
\leq \tilde{C}_{1} R_{2}^{-1} \overline{\mathcal{K}}^{\frac{1}{2}}(p, R_2, 0)  \label{volgrow}
\ee
for $0\leq r_1 \leq r_2 < R_1 \leq R_2$, where
\beqn
\tilde{C}_{1} &=&\frac{n}{2p}\left(\frac{(n-1)(2p-1)}{2p-n}\right)^{\frac{1}{2}}\left[ R_1^{n} \int_{r_1}^{r_2} (R_{1}^{n}-t^n)^{-\frac{2p+1}{2p}} dt+\int_{R_1}^{R_2} t^{n} \left(t^{n}-r_{2}^n\right)^{-\frac{2p+1}{2p}}dt\right]m(B^{+}_{R_2}(x_0))^{\frac{1}{2p}}\\
&\leq & \frac{n}{p}\left(\frac{(n-1)(2p-1)}{2p-n}\right)^{\frac{1}{2}} R_{2}^{n+1} \left(\frac{1}{R_{1}^{n}-r_{2}^n}\right)^{\frac{2p+1}{2p}}m(B^{+}_{R_2}(x_0))^{\frac{1}{2p}}\\
&:= & C(n, p) R_{2}^{n+1} \left(\frac{1}{R_{1}^{n}-r_{2}^n}\right)^{\frac{2p+1}{2p}}m(B^{+}_{R_2}(x_0))^{\frac{1}{2p}}.
\eeqn

Letting $r_1=0$, $r_2=R-1$, $R_1=R$ and $R_2=R+1$ in (\ref{volgrow}), then we have
\beqn
\frac{m(B^+_{R+1}(x_0))-m(B^+_{R-1}(x_0))}{(R+1)^n-(R-1)^n}
&\leq & \left[\left(\frac{m(B^+_{ R}(x_0))}{R^n}\right)^{\frac{1}{2p}}
   +  C(n, p)\overline{\mathcal{K}}^{\frac{1}{2}}(p, R+1, 0) (R+1)^{n}m(B^+_{R+1}(x_0))^{\frac{1}{2p}}\right]^{2p}\\
&\leq &C_1(n, p) \frac{m(B^+_{ R}(x_0))}{R^n} + C_2(n, p) (R+1)^{2pn} \overline{\mathcal{K}}^p(p, R+1, 0) m(B^+_{R+1}(x_0)),
\eeqn
where we have used $(a+b)^{2p}\leq 2^{2p-1}(a^{2p}+b^{2p})$ for $a\geq0$ and $b\geq0$. Multiplying the both sides of this inequality by $\frac{(R+1)^n-(R-1)^n}{m(B^+_{R+1}(x_0))}$ yields
$$
\frac{m(B^{+}_{R+1}(x_0))-m(B^{+}_{R-1}(x_0))}{m(B^{+}_{R+1}(x_0))} \leq  \frac{C_3(n, p)}{R} + C_4(n, p) (R+1)^{(2p+1)n} \overline{\mathcal{K}}^p(p, R+1, 0).
$$
Now we choose $\varepsilon=\varepsilon(n, p, R)\leq \left(R (R+1)^{(2p+1)n}\right)^{-\frac{1}{p}}$ small enough. Obviously, if $\overline{\mathcal{K}}(p, R+1, 0)<\varepsilon$, then
\be
\frac{m(B^+_{R+1}(x_0))-m(B^+_{R-1}(x_0))}{m(B^+_{R+1}(x_0))}\leq \frac{C_5(n,p)}{R}. \label{volgroe1}
\ee

Let $x$ be a point with $d(x_{0}, x)=R \geq 1$. For any $z \in B_{1}^{+}(x) \cap B_{1}^{-}(x)$,  $d(x, z)<1, \ d(z, x)<1$ and $d(x_0, z) \leq d(x_0, x)+d(x, z)< R+1$. On the other hand,
$d(x_0, z)  \geq d(x_0, x)-d(z, x) >R-1$.  Then $B^{+}_{1}(x)\cap B^{-}_{1}(x)\subset B^{+}_{R+1}(x_0)\backslash B^{+}_{R-1}(x_0)$ and
\[
m\left(B_{1}^{+}(x) \cap B_{1}^{-}(x)\right) \leq m\left(B_{R+1}^{+}(x_0)\right)-m\left(B_{R-1}^{+}(x_0)\right).
\]
From (\ref{volgroe1}), we have
$$
m(B^{+}_{R+1}(x_0))\geq C_{6}(n, p)m\left(B^{+}_{1}(x)\cap B^{-}_{1}(x)\right) R,
$$
which inplies that $M$ has infinite volume.

If the reversibility $\Lambda_F<\infty$, then we have $B^{+}_{R+1}(x_0) \subset B^{+}_{(\Lambda_F+2)R}(x)$. Thus, we obtain
$$
m(B^{+}_{(\Lambda_{F} +2)R}(x))\geq C_{6}(n, p) m\left(B^{+}_{1}(x)\cap B^{-}_{1}(x)\right) R.
$$
This finishes the proof.
\end{proof}

From Theorem \ref{doubling} and following the arguments in \cite{KristalyZ, ShenZ}, we can also easily derive the following  Gromov type pre-compactness in the case with  integral weighted Ricci curvature bounds.

\begin{thm}\label{pre-com1}
Given any  $p> \frac{n}{2}$,  $\vartheta\geq 0$, $\theta \geq 1$, $K\in \mathbb{R}$ and $D$, $V\in(0, \infty)$  ($D\leq \frac{\pi}{2\sqrt{K}}$ when $K>0$), then there exists a constant $\varepsilon=\varepsilon(n,p, \vartheta, K, D)$ such that the collection of closed Finsler metric measure manifolds $(M, F, m)$ with
$$
\overline{\|{\rm Ric}_{\infty}^{K}\|}_{p, D, \vartheta}<\varepsilon, \ \mathbf{S}\geq -\vartheta, \ \Lambda_{F}\leq \theta, \ {\rm Diam}(M)\leq D, \ m(M)\leq V
$$
is pre-compact in the measured $\theta$-Gromov-Hausdorff topology.
\end{thm}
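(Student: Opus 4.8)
The plan is to derive the statement from the classical Gromov packing argument, adapted to the asymmetry of $F$ as in \cite{KristalyZ,ShenZ}, with Theorem \ref{doubling} supplying the only nontrivial input. First I fix once and for all the dilation constant $\Xi=2$ in Theorem \ref{doubling}, and let $\varepsilon:=\varepsilon(n,p,\vartheta,K,D,2)$ be the resulting threshold with $R=D$; this depends only on $n,p,\vartheta,K,D$, as asserted. Since $\mathbf{S}\ge-\vartheta$ forces $\mathbf{S}(\nabla r)\ge-\vartheta$ along every minimal geodesic emanating from every point of $M$, Theorem \ref{doubling} is applicable with an arbitrary center $x\in M$ whenever $\overline{\|{\rm Ric}_\infty^K\|}_{p,D,\vartheta}<\varepsilon$.

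Next I would establish uniform covering numbers. Fix $0<\epsilon<D$ and pick a maximal subset $\{x_1,\dots,x_N\}\subset M$ with $d(x_i,x_j)\ge\epsilon$ whenever $i\ne j$. From $\Lambda_F\le\theta$ one has $d(z,w)\le\theta\,d(w,z)$ for all $z,w\in M$; hence if some $z$ belonged to both $B^+_{\epsilon/(1+\theta)}(x_i)$ and $B^+_{\epsilon/(1+\theta)}(x_j)$ with $i\ne j$, then
$$
d(x_i,x_j)\le d(x_i,z)+d(z,x_j)\le d(x_i,z)+\theta\,d(x_j,z)<(1+\theta)\cdot\frac{\epsilon}{1+\theta}=\epsilon,
$$
a contradiction; so the forward balls $B^+_{\epsilon/(1+\theta)}(x_i)$ are pairwise disjoint. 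Because ${\rm Diam}(M)\le D$ we have $m\big(B^+_D(x_i)\big)=m(M)$ (if one insists on strict radii, enlarge $D$ by $\eta>0$ and let $\eta\to0^+$). Applying Theorem \ref{doubling} with $r_1=\epsilon/(1+\theta)$, $r_2=D$ gives, for each $i$,
$$
m(M)=m\big(B^+_D(x_i)\big)\le\Xi\,\frac{v(n,K,D,\vartheta)}{v\big(n,K,\tfrac{\epsilon}{1+\theta},\vartheta\big)}\,m\big(B^+_{\epsilon/(1+\theta)}(x_i)\big).
$$
Summing over $i$ and using disjointness (so that $\sum_i m(B^+_{\epsilon/(1+\theta)}(x_i))\le m(M)$), the common factor $m(M)$ cancels and we obtain $N\le N(\epsilon):=\Xi\,v(n,K,D,\vartheta)/v\big(n,K,\tfrac{\epsilon}{1+\theta},\vartheta\big)$, a bound independent of the particular $(M,F,m)$. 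By maximality, the forward balls $B^+_{\theta\epsilon}(x_1),\dots,B^+_{\theta\epsilon}(x_N)$ cover $M$.

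To conclude, pass to the symmetrized distance $d^{s}(x,y):=\max\{d(x,y),d(y,x)\}$; by $\Lambda_F\le\theta$ it is comparable to $d$ with constants depending only on $\theta$, so $(M,d^{s})$ is a genuine metric space of diameter at most $\theta D$, and the previous paragraph shows the family $\{(M,d^{s})\}$ is uniformly totally bounded. Gromov's pre-compactness criterion in the form used in \cite{KristalyZ,ShenZ} then makes this family pre-compact in the $\theta$-Gromov-Hausdorff topology. Finally, along any $\theta$-Gromov-Hausdorff convergent subsequence the measures $m$, having uniformly bounded total mass $m(M)\le V$, admit a weak-$*$ convergent subsequence; hence the collection is pre-compact in the measured $\theta$-Gromov-Hausdorff topology.

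The main obstacle is bookkeeping rather than analysis: everything of substance is in Theorem \ref{doubling}, but the non-reversibility of $F$ forces one to be careful about forward versus backward balls in the packing step and to carry the reversibility factor $\theta$ through, and it requires invoking the asymmetric ($\theta$-Gromov-Hausdorff) version of Gromov's compactness theorem from \cite{KristalyZ,ShenZ} in place of the classical one. A minor technical point is the identification $m(B^+_D(x_i))=m(M)$ at the diameter scale, dealt with by an arbitrarily small enlargement of $D$.
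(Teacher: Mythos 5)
Your proposal is correct and is essentially the argument the paper intends: the paper gives no written proof, deferring to the standard packing/covering argument of \cite{KristalyZ,ShenZ} with Theorem \ref{doubling} as the only new input, which is exactly what you carry out. The handling of the asymmetry (disjointness of the balls $B^+_{\epsilon/(1+\theta)}(x_i)$, the covering by $B^+_{\theta\epsilon}(x_i)$, and the passage to the $\theta$-Gromov--Hausdorff framework) is done correctly.
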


\begin{thm}\label{pre-com2}
Given any $p> \frac{n}{2}$, $V\in(0, \infty)$, $\vartheta\geq 0$, $K\in \mathbb{R}$, $r>0$ ($r\leq \frac{\pi}{2\sqrt{K}}$ when $K>0$) and letting $\Theta: [0, \infty)\rightarrow [1, \infty)$ be a nondecreasing function, then there exists a constant $\varepsilon=\varepsilon(n,p, \vartheta, K, r)$ such that the collection of pointed forward complete Finsler metric measure manifolds ($M, \star, F, m$) with
$$
\overline{\|{\rm Ric}_{\infty}^{K}\|}_{p, r, \vartheta}<\varepsilon, \ \mathbf{S}\geq -\vartheta, \ \Lambda_F(B^+_r(\star))\leq \Theta(r), \  m(B^+_r(\star))\leq V
$$
is pre-compact in the pointed measured forward $\Theta$-Gromov-Hausdorff topology.
\end{thm}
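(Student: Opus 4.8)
The plan is to follow the classical route from a Bishop--Gromov-type inequality to a Gromov pre-compactness theorem, now in the non-reversible (Finsler) and pointed setting; the argument is the ``local, exhausted'' analogue of the proof of Theorem \ref{pre-com1}. Three ingredients are needed: (i) a \emph{uniform} packing estimate for forward geodesic balls, extracted from Theorem \ref{doubling}; (ii) a uniform upper bound for the measure of forward balls; and (iii) the general principle that a uniformly totally bounded family of pointed metric measure spaces with uniformly bounded mass on bounded balls is pre-compact in the pointed measured forward $\Theta$-Gromov-Hausdorff topology, which is where the arguments of \cite{KristalyZ, ShenZ} enter.

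First I would fix the threshold $\varepsilon=\varepsilon(n,p,\vartheta,K,r)$ so small that Theorem \ref{doubling} applies with, say, $\Xi=2$ on every scale permitted by $K$; this gives, uniformly over the family and over all centers $x$, a doubling inequality of the shape $m(B^{+}_{s_2}(x))\le 2\,v(n,K,s_2,\vartheta)\,v(n,K,s_1,\vartheta)^{-1}\,m(B^{+}_{s_1}(x))$ for $0<s_1\le s_2$ (with $s_2\le \frac{\pi}{2\sqrt{K}}$ when $K>0$). Iterating this from the base scale, the hypothesis $m(B^{+}_r(\star))\le V$ yields a uniform bound $m(B^{+}_\rho(\star))\le V_\rho$ for every fixed $\rho$. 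For the packing estimate, fix $\rho>0$ and $\delta\in(0,\rho)$ and let $y_1,\dots,y_N\in B^{+}_\rho(\star)$ be pairwise forward $\delta$-separated; using the reversibility control $\Lambda_F(B^{+}_\cdot(\star))\le\Theta(\cdot)$ to interchange forward and backward distances, one checks that for a suitable $s=s(\delta,\rho,\Theta)>0$ the forward balls $B^{+}_s(y_i)$ are pairwise disjoint and contained in a fixed forward ball $B^{+}_{R'}(\star)$, while by the doubling inequality applied around each $y_i$ one has $m(B^{+}_s(y_i))\ge c\,m(B^{+}_{R'}(\star))$ with $c=c(n,K,\vartheta,\delta,\rho,\Theta)>0$. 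Summing over $i$ and cancelling $m(B^{+}_{R'}(\star))$ forces $N\le c^{-1}$, a bound independent of the particular manifold.

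With this packing bound, each forward ball $B^{+}_\rho(\star)$ is, uniformly, coverable by at most $N(\delta,\rho)$ forward $\delta$-balls, so the family $\{(B^{+}_\rho(\star),d,\star)\}$ is uniformly totally bounded, and the restricted measures $m|_{B^{+}_\rho(\star)}$ have uniformly bounded mass $\le V_\rho$. A diagonal argument along the exhaustion $\rho=1,2,3,\dots$ then extracts, from any sequence in the family, a subsequence whose forward balls converge in the forward $\Theta$-Gromov-Hausdorff sense and whose restricted measures converge weakly-$*$, with limits that are compatible across scales; assembling them produces a pointed limit space carrying a Radon limit measure, which is the asserted pre-compactness. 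I expect the genuine obstacle to be the packing estimate of the second paragraph: because $F$ is non-reversible one must pass between forward and backward balls at every triangle-inequality step, so the function $\Theta$ has to be threaded carefully through the volume comparisons, and when $K>0$ every auxiliary radius introduced must be kept below $\frac{\pi}{2\sqrt{K}}$ so that Theorem \ref{doubling} remains in force; the Gromov-Hausdorff machinery of the third paragraph is then routine, modulo the bookkeeping for non-symmetry already developed in \cite{KristalyZ, ShenZ}.
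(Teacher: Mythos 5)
Your outline is essentially the paper's own argument: the paper gives no proof of Theorem \ref{pre-com2}, stating only that it follows from Theorem \ref{doubling} together with the arguments in \cite{KristalyZ, ShenZ}, and your three ingredients (uniform doubling from Theorem \ref{doubling}, the resulting packing/total-boundedness estimate, and the pointed measured forward $\Theta$-Gromov--Hausdorff compactness criterion of \cite{KristalyZ, ShenZ}) are exactly the standard expansion of that remark. The only point to watch is that the hypotheses control curvature, reversibility and mass only at the fixed scale $r$, so your passage to larger radii $\rho$ in the exhaustion needs the usual chaining/covering argument (with all auxiliary radii kept below $\frac{\pi}{2\sqrt{K}}$ when $K>0$), which you acknowledge but do not carry out.
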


\section{Dirichlet isoperimetric constant estimate}{\label{Sobolev}}

Let $(M, F, m)$ be an n-dimensional Finsler metric measure manifold and $\Omega \subseteq M$ be a domain with smooth  boundary $\partial \Omega \neq \emptyset$. For any $x \in \partial \Omega$, there exist exactly two unit norm vectors $\mathbf{n}$ which are characterized by
$$
T_{x}(\partial \Omega)=\left\{V \in T_{x} M \mid g_{\mathbf{n}}(\mathbf{n}, V)=0, \ g_{\mathbf{n}}(\mathbf{n}, \mathbf{n})=1\right\} .
$$
Note that, if $\mathbf{n}$ is a norm vector on $\pa \Omega$,  $- \mathbf{n}$ may not be a norm vector unless $F$ is reversible. Denote by $\mathbf{n}_{+}$ (resp. $\mathbf{n}_{-}$) the normal vector that points outward (resp. inward) $\Omega$. The  normal vector  $\mathbf{n}_{+}$ or $\mathbf{n}_{-}$  induces a volume form $\mathrm{d} \nu_{+}$ or $\mathrm{d} \nu_{-}$ on $\pa \Omega$ from $dm$ respectively (see Section 2.4, \cite{Shen1}).
The volumes of $\partial \Omega$ with respect to ${\rm d} \nu_{+}$ and ${\rm d}{\nu}_{-}$ are written respectively by
$$
\nu_{+}(\partial \Omega)=\int_{\partial \Omega} \mathrm{d} \nu_{+} \quad \text { and } \quad \nu_{-}(\partial \Omega)=\int_{\partial \Omega} \mathrm{~d} \nu_{-}.
$$
Note that $\nu_{+}(\partial \Omega) \neq \nu_-(\partial \Omega)$ in general. In fact, the ratio $\frac{\nu_{+}(\partial \Omega)}{\nu_{-}(\partial \Omega)}$ or $\frac{\nu_{-}(\partial \Omega)}{\nu_{+}(\partial \Omega)}$ may be very large.

Now we give the definitions of Dirichlet isoperimetric constant and Sobolev constant on Finsler manifolds and prove an important relation between them.

\begin{Def}
Let $(M, F, m)$ be a forward complete non-compact Finsler metric measure manifold  or a compact Finsler metric measure manifold  with $\partial M\neq \emptyset$. For $n\leq \alpha \leq \infty$, the Dirichlet $\alpha$-isoperimetric constant of $M$ is defined by
\be
 {\rm ID}_{\alpha}(M)=\inf\limits_{\Omega} \frac{\min\{\nu_+(\partial \Omega), \nu_-(\partial \Omega)\}}{m(\Omega)^{1-\frac{1}{\alpha}}}, \label{Diiso}
\ee
where the infimum is taken over all domains $\Omega\subset M$ with compact closure and smooth boundary such that $\partial \Omega \cap \partial M= \emptyset$.
\end{Def}

\begin{Def}
For a forward complete Finsler metric measure manifold $(M,F, m)$, the Dirichlet $\alpha$-Sobolev constant of $M$ is defined by
\be
{\rm SD}_{\alpha}(M)=\inf\limits_{f} \frac{\min \{\int_M F^*(df)dm, \int_M \overleftarrow{F}^{*}(df)dm\}}{\left(\int_M |f|^{\frac{\alpha}{\alpha-1}}dm \right)^{\frac{\alpha-1}{\alpha}}}, \label{DiSo}
\ee
where $f\in C^{\infty}_{0}(M)$.
\end{Def}

\begin{thm}{\label{alpha-relation}}
For $n\leq \alpha \leq \infty$, ${\rm ID}_{\alpha}(M)={\rm SD}_{\alpha}(M)$.
\end{thm}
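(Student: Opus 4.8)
The plan is to prove the equality ${\rm ID}_\alpha(M) = {\rm SD}_\alpha(M)$ by establishing both inequalities, with the coarea formula for the Finsler distance (equivalently, the level-set structure of smooth functions) serving as the bridge between the two constants. Throughout, one should keep track of both $F^*$ and $\overleftarrow{F}^*$ (equivalently, $\nu_+$ and $\nu_-$), since the nonreversibility means the ``$\min\{\cdot,\cdot\}$'' in each definition must be matched carefully between the isoperimetric side and the Sobolev side.

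First I would show ${\rm SD}_\alpha(M) \le {\rm ID}_\alpha(M)$. Given any domain $\Omega$ with compact closure and smooth boundary with $\partial\Omega \cap \partial M = \emptyset$, approximate its characteristic function $\chi_\Omega$ by smooth compactly supported functions $f_\varepsilon$. The standard device is to take $f_\varepsilon$ to be a function equal to $1$ on $\Omega$, supported in a small forward tubular neighborhood of $\overline{\Omega}$, and interpolating linearly in the distance to $\partial\Omega$. Then $\int_M |f_\varepsilon|^{\alpha/(\alpha-1)}\,dm \to m(\Omega)$, while $\int_M F^*(df_\varepsilon)\,dm \to \nu_+(\partial\Omega)$ (if the neighborhood is taken on the outward side, using $\mathbf{n}_+$) or $\to \nu_-(\partial\Omega)$ (inward side), by the very definition of the boundary volume forms $d\nu_\pm$ induced from $dm$ via $\mathbf{n}_\pm$. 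Choosing the side that realizes $\min\{\nu_+(\partial\Omega),\nu_-(\partial\Omega)\}$, and noting $\int_M \overleftarrow{F}^*(df_\varepsilon)\,dm$ is controlled similarly, one gets $\min\{\int_M F^*(df_\varepsilon)dm,\int_M \overleftarrow{F}^*(df_\varepsilon)dm\} \big/ (\int_M |f_\varepsilon|^{\alpha/(\alpha-1)}dm)^{(\alpha-1)/\alpha} \to \min\{\nu_+(\partial\Omega),\nu_-(\partial\Omega)\}/m(\Omega)^{1-1/\alpha}$. Taking infimum over $\Omega$ gives ${\rm SD}_\alpha(M) \le {\rm ID}_\alpha(M)$.

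For the reverse inequality ${\rm ID}_\alpha(M) \le {\rm SD}_\alpha(M)$, fix $f \in C_0^\infty(M)$, $f \not\equiv 0$, and set $\Omega_t = \{x : |f(x)| > t\}$. By Sard's theorem, for a.e. $t > 0$ the boundary $\partial\Omega_t = \{|f| = t\}$ is a smooth hypersurface; since $f$ is compactly supported, $\Omega_t$ has compact closure and (for generic $t$) $\partial\Omega_t \cap \partial M = \emptyset$. The coarea formula in the Finsler setting gives $\int_M F^*(df)\,dm = \int_0^\infty \nu_+(\partial\Omega_t)\,dt$ (with the outward normal pointing in the direction of decreasing $|f|$, consistent with $F^*(df) = F(\nabla |f|)$), and likewise $\int_M \overleftarrow{F}^*(df)\,dm = \int_0^\infty \nu_-(\partial\Omega_t)\,dt$. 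Combining with the definition of ${\rm ID}_\alpha$, one has $\nu_+(\partial\Omega_t) \ge {\rm ID}_\alpha(M)\, m(\Omega_t)^{1-1/\alpha}$ and similarly for $\nu_-$, so $\min\{\int_M F^*(df)dm, \int_M \overleftarrow{F}^*(df)dm\} \ge {\rm ID}_\alpha(M)\int_0^\infty m(\Omega_t)^{1-1/\alpha}\,dt$. It then remains to show the purely measure-theoretic inequality $\int_0^\infty m(\Omega_t)^{1-1/\alpha}\,dt \ge \big(\int_M |f|^{\alpha/(\alpha-1)}\,dm\big)^{(\alpha-1)/\alpha}$, which is the standard layer-cake/Minkowski-integral-inequality argument: writing $|f(x)|^{\alpha/(\alpha-1)} = \frac{\alpha}{\alpha-1}\int_0^\infty t^{1/(\alpha-1)}\chi_{\Omega_t}(x)\,dt$ and applying the integral Minkowski inequality with exponent $\alpha/(\alpha-1) \ge 1$. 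This yields ${\rm ID}_\alpha(M) \le {\rm SD}_\alpha(M)$, completing the proof. The case $\alpha = \infty$ is handled by the obvious modifications (the exponent $\alpha/(\alpha-1)$ becomes $1$ and $1-1/\alpha$ becomes $1$).

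The main obstacle I anticipate is the rigorous justification of the Finsler coarea formula relating $\int_M F^*(df)\,dm$ to $\int_0^\infty \nu_+(\{|f|=t\})\,dt$, and in particular checking that the boundary volume form $d\nu_+$ induced by $\mathbf{n}_+$ (as defined in Section 2.4 of \cite{Shen1}) is precisely the one that appears when one disintegrates $dm$ along the level sets of $|f|$ using $F(\nabla|f|) = F^*(d|f|)$. One must be careful with the nonreversibility: the outward normal of $\Omega_t$ with respect to $\mathbf{n}_+$ corresponds to $\nabla|f|$ pointing out of $\Omega_t$, and it is this matching — rather than $\mathbf{n}_-$ — that makes $\int F^*(df)\,dm$ pair with $\nu_+$ and $\int \overleftarrow{F}^*(df)\,dm$ pair with $\nu_-$. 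The approximation argument in the first half has a similar subtlety: one must verify that the linear-in-distance cutoff on the outward tube produces $\nu_+(\partial\Omega)$ in the limit (not $\nu_-$), using the first variation of the forward distance function to $\partial\Omega$. Once these identifications are pinned down, everything else is routine.
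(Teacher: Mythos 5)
Your proposal is correct and follows essentially the same route as the paper: coarea plus the layer-cake/Minkowski-type inequality for ${\rm ID}_\alpha(M)\le {\rm SD}_\alpha(M)$ (the paper proves the measure-theoretic step by an elementary monotonicity argument for an auxiliary function rather than citing Minkowski's integral inequality, and handles nonreversibility by splitting into the cases $f\ge 0$ and $f\le 0$ instead of working with $|f|$ --- which sidesteps the fact that your identity $F^*(df)=F(\nabla |f|)$ fails where $f<0$), and distance-to-$\partial\Omega$ cutoffs approximating $\chi_\Omega$ from inside and outside for the reverse inequality. The orientation bookkeeping you flag as the main obstacle is resolved exactly as you anticipate: the inward cutoff pairs $F^{*}$ with $\nu_-$ and the outward cutoff pairs $\overleftarrow{F}^{*}$ with $\nu_+$.
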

\begin{proof}
In order to prove that ${\rm ID}_{\alpha}(M)\leq {\rm SD}_{\alpha}(M)$, it suffices to show that any Lipschitz function defined on $M$ with boundary condition $f|_{\partial M}=0$ must satisfy
\begin{equation}{\label{alpha-relation}}
\int_M F^*(df) dm \geq {\rm ID}_{\alpha}(M) \left(\int_M |f|^{\frac{\alpha}{\alpha-1}}dm\right)^{\frac{\alpha-1}{\alpha}}.
\end{equation}

Firstly, we may assume that $f\geq 0$. Let  $M_{t}:=\{x\in M \mid f(x)>t\}$ be the sublevel set of $f$. By the co-area formula (Theorem 3.3.1, \cite{Shen1}), it follows that
\begin{equation}{\label{alpha-relation-1}}
\int_M F^*(df) dm =\int_{0}^{\infty} \int_{f^{-1}(t)} d\nu dt \geq {\rm ID}_{\alpha}(M) \int_0^{\infty} m(M_t)^{\frac{\alpha-1}{\alpha}} dt,
\end{equation}
where $d\nu=d\nu_{+}$(or $d\nu_{-}$).
Let
$$
F(s):=\left(\int_0^s m(M_t)^{\frac{\alpha-1}{\alpha}}dt\right)^{\frac{\alpha}{\alpha-1}}- \frac{\alpha}{\alpha-1} \int_0^s t^{\frac{1}{\alpha-1}} m(M_t) dt.
$$
Then
$$
F'(s)= \frac{\alpha}{\alpha-1} \left(\int_0^s m(M_t)^{\frac{\alpha-1}{\alpha}}dt\right)^{\frac{1}{\alpha-1}} m(M_s)^{\frac{\alpha-1}{\alpha}}
      -\frac{\alpha}{\alpha-1} s^{\frac{1}{\alpha-1}} m(M_s).
$$
Since $\int_0^s m(M_t)^{\frac{\alpha-1}{\alpha}} dt \geq s \ m(M_s)^{\frac{\alpha-1}{\alpha}}$ for $t\leq s$, then we have $F'(s)\geq 0$. Hence, $F(s)\geq 0$. Moreover, by (\ref{alpha-relation-1}), we know that
$$
\int_M F^*(df) dm \geq {\rm ID}_{\alpha}(M) \left(\frac{\alpha}{\alpha-1} \int_{0}^{\infty} t^{\frac{1}{\alpha-1}} m(M_t) dt\right)^{\frac{\alpha-1}{\alpha}}.
$$
Further, by using integrating by parts and using the co-area formula again,  we obtain
\beqn
\frac{\alpha}{\alpha-1} \int_{0}^{\infty} t^{\frac{1}{\alpha-1}} m(M_t) dt &=& \int_{0}^{\infty} (t^{\frac{\alpha}{\alpha-1}})' \left[\int_{t}^{\infty} \left(\int_{f^{-1}(s)} \frac{1}{F(\nabla f)}d\nu\right)ds\right]dt\\
&=& \int_{0}^{\infty} t^{\frac{\alpha}{\alpha-1}}\left( \int_{f^{-1}(t)} \frac{1}{F(\nabla f)}d\nu\right)dt =\int_{0}^{\infty} \int_{f^{-1}(t)} f^{\frac{\alpha}{\alpha-1}} \frac{1}{F(\nabla f)} d\nu d t \\
&=& \int_{M} f^{\frac{\alpha}{\alpha-1}} dm.
\eeqn
This means that (\ref{alpha-relation}) holds for $f\geq 0$.

Likewise, when $f\leq 0$, letting $g=-f$, one can show that
\begin{equation*}
\int_M \overleftarrow{F}^{*}(df) dm=\int_M F^{*}(dg) dm \geq {\rm ID}_{\alpha}(M) \left(\int_M |g|^{\frac{\alpha}{\alpha-1}}dm\right)^{\frac{\alpha-1}{\alpha}}={\rm ID}_{\alpha}(M) \left(\int_M |f|^{\frac{\alpha}{\alpha-1}}dm\right)^{\frac{\alpha-1}{\alpha}}.
\end{equation*}
Thus, we get ${\rm ID}_{\alpha}(M) \leq {\rm SD}_{\alpha}(M)$.

Now we prove that ${\rm ID}_{\alpha}(M)\geq {\rm SD}_{\alpha}(M)$. Let $\Omega$ be any domain of $M$  with compact closure and smooth boundary such that $\partial \Omega \cap \partial M= \emptyset$. For $\varepsilon >0$, define the $\varepsilon$-neighborhood $N_{\varepsilon}:= \{x\in \Omega \mid d(\partial \Omega, x) \geq \varepsilon\}$. Let
$$
f_{\varepsilon}(x)=
\begin{cases}
0 & {\rm on} \ \ M\setminus \Omega, \\ \frac{1}{\varepsilon} d(\partial \Omega, x) &  {\rm on} \ \ \Omega\setminus N_{\varepsilon}, \\ 1 & {\rm on} \ \ N_{\varepsilon}.
\end{cases}
$$
Clearly, $f_{\varepsilon}(x)$ is a Lipschitz function defined on $M$ with Dirichlet boundary condition. Note that $\nabla d(\partial \Omega, x)|_{\partial \Omega}=\mathbf{n}_-$. Then we have from co-area formula
\be
\int_{M} F(\nabla f_{\varepsilon}) dm= \frac{1}{\varepsilon} \int^{\varepsilon}_0 \int_{d(\partial \Omega, x)=t} d\nu_- dt. \label{co-area}
\ee
Applying the definition of Sobolev constant to $f_{\varepsilon}$ gives
$$
\int_M F(\nabla f_{\varepsilon}) dm \geq {\rm SD}_{\alpha}(M)\left(\int_M |f_{\varepsilon}|^{\frac{\alpha}{\alpha-1}}\right)^{\frac{\alpha-1}{\alpha}} \geq {\rm SD}_{\alpha}(M) m(N_{\varepsilon})^{\frac{\alpha-1}{\alpha}}.
$$
From this and (\ref{co-area}), letting $\varepsilon\rightarrow 0$ yields
$$
\nu_{-}(\partial \Omega)\geq {\rm SD}_{\alpha}(M) m(\Omega)^{\frac{\alpha-1}{\alpha}},
$$
which implies that  ${\rm ID}_{\alpha}(M)\geq {\rm SD}_{\alpha}(M)$.

Similarly, we define the $\varepsilon$-neighborhood  $\bar{N}_{\varepsilon}:=\{x\in \Omega^c \mid d(\partial \Omega, x) \geq \varepsilon\}$, where $ \Omega^{c}:=M\setminus \Omega$. Let
$$
\bar{f}_{\varepsilon}(x)=
\begin{cases}
 1& {\rm on} \ \ \Omega, \\ 1-\frac{1}{\varepsilon} d(\partial \Omega, x) &  {\rm on} \ \ \Omega^c\setminus \bar{N}_{\varepsilon}, \\ 0 & {\rm on} \ \ \bar{N}_{\varepsilon}.
\end{cases}
$$
In this case, $\nabla d(\partial \Omega, x)|_{\partial \Omega}=\mathbf{n}_{+}$. Then, by co-area formula, we have
$$
\int_{M} \overleftarrow{F}^{*}(d\bar{f}_{\varepsilon}) dm= \frac{1}{\varepsilon} \int^{\varepsilon}_{0} \int_{d(\partial \Omega, x)=t} d\nu_{+} \ dt \longrightarrow \nu_{+}(\partial \Omega) \ (\varepsilon\rightarrow 0).
$$
Applying the definition of Sobolev constant to $\bar{f}_{\varepsilon}$ and letting $\varepsilon\rightarrow 0$  yield
$$
\int_{M} \overleftarrow{F}^{*}(d\bar{f}_{\varepsilon}) dm \geq {\rm SD}_{\alpha}(M) \left(\int_M |\bar{f}_{\varepsilon}|^{\frac{\alpha}{\alpha-1}}\right)^{\frac{\alpha-1}{\alpha}}  \longrightarrow  {\rm SD}_{\alpha}(M) m(\Omega)^{\frac{\alpha-1}{\alpha}},
$$
which implies that $\nu_{+}(\partial \Omega)\geq {\rm SD}_{\alpha}(M) m(\Omega)^{\frac{\alpha-1}{\alpha}}$.  Thus, it follows from the arbitrary of the $\Omega$ that ${\rm ID}_{\alpha}(M)\geq {\rm SD}_{\alpha}(M)$.
\end{proof}

For convenience, we normalize the  Dirichelet  $\alpha$-isoperimetric and $\alpha$-Sobolev constants as followings:
$$
{\rm ID}_{\alpha}^{*}(M)= \frac{{\rm ID}_{\alpha}(M)}{m(M)^{1/\alpha}}, \ \ \ {\rm SD}_{\alpha}^{*}(M)= \frac{{\rm SD}_{\alpha}(M)}{m(M)^{1/\alpha}}.
$$

In order  to prove $n$-Dirichlet isoperimetric constant estimate (or $n$-Dirichlet Sobolev constant estimate), we need some fundamental results.  Firstly, following the arguments of Lemma 4.1 in  \cite{DWZ} and Appendix C in \cite{Gromov}, we can prove the following proposition.

\begin{prop}{\label{W}}
Let ($M$, $F$, $m$) be an n-dimensional forward complete Finsler metric measure manifold and $\Sigma$ be any hypersurface dividing $M$ into two parts $M_1$, $M_2$. For any Borel subsets $W_i\subset M_i$ $(i=1,2)$, there exists $x_1$ in one of $W_i$ (denoted by $\widetilde{W}_1$) and a subset $W$ in the other one (denoted by $\widetilde{W}_2$), and a unique minimal geodesic from $x_1$ to any $x_2\in W$ which intersects $\Sigma$ at some $z$ such that
$$
m(W)\geq \frac{1}{2} m(\widetilde{W}_2)
$$
and
$$
d(x_1, z)\geq d(z, x_2) \ \text {or} \ \overleftarrow{d}(x_1, z)\geq \overleftarrow{d}(z, x_2).
$$
\end{prop}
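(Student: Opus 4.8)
The plan is to adapt the Euclidean "bisection by segments" argument from Appendix C of Gromov's work (as used in Lemma 4.1 of \cite{DWZ}) to the forward-complete Finsler setting, carefully tracking the asymmetry of the distance. First I would fix the hypersurface $\Sigma$ dividing $M$ into $M_1$ and $M_2$, and fix Borel sets $W_i\subset M_i$. For each pair $(x_1,x_2)\in W_1\times W_2$ (and symmetrically for $(x_2,x_1)\in W_2\times W_1$), I would connect them by a minimal forward geodesic, which exists by the Hopf--Rinow theorem on forward complete Finsler manifolds; since $x_1$ and $x_2$ lie on opposite sides of $\Sigma$, this geodesic meets $\Sigma$ at some point $z=z(x_1,x_2)$. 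I would then split $W_1\times W_2$ according to which endpoint is "farther" from the crossing point $z$: let
$$
A=\{(x_1,x_2)\in W_1\times W_2 \mid d(x_1,z)\geq d(z,x_2)\},\qquad
B=\{(x_1,x_2)\in W_1\times W_2 \mid d(x_1,z)< d(z,x_2)\}.
$$
By Fubini one of these carries at least half the product measure $m\times m$; say it is $A$ (the case $B$ being handled by the reverse metric $\overleftarrow{F}$, which is why the statement allows the alternative $\overleftarrow{d}(x_1,z)\ge\overleftarrow{d}(z,x_2)$).

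Next I would use Fubini again to find a good slice. Since $m(A)\geq \tfrac12 m(W_1)m(W_2)$, there exists a point $x_1\in W_1$ such that the slice
$$
W:=\{x_2\in W_2 \mid (x_1,x_2)\in A\}
$$
has $m(W)\geq \tfrac12 m(W_2)$. This $x_1$ and this $W$ are the desired objects, with $\widetilde W_1=W_1$, $\widetilde W_2=W_2$: for every $x_2\in W$ the minimal forward geodesic from $x_1$ to $x_2$ crosses $\Sigma$ at $z$ with $d(x_1,z)\geq d(z,x_2)$. The uniqueness of the minimal geodesic from $x_1$ to $x_2$ for a.e.\ such $x_2$ follows because the set of points joined to a fixed point by more than one minimal geodesic is contained in the cut locus, which has null measure (stated in the preliminaries); so after discarding a null set from $W$ we keep $m(W)\ge\tfrac12 m(W_2)$ and retain uniqueness. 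If instead $B$ had the larger measure, the same argument with roles of $M_1,M_2$ swapped and the geodesic traversed backwards (i.e.\ working with $\overleftarrow F$) produces $x_1\in W_2$, $W\subset W_1$, and the inequality $\overleftarrow d(x_1,z)\ge\overleftarrow d(z,x_2)$; this is the reason $\widetilde W_1$ is allowed to be either of the $W_i$.

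The main obstacle I anticipate is measurability and the legitimacy of the Fubini steps: one must check that the crossing point $z(x_1,x_2)$, and hence the functions $(x_1,x_2)\mapsto d(x_1,z)-d(z,x_2)$, depend measurably on $(x_1,x_2)$, so that $A$ and $B$ are genuinely measurable subsets of $W_1\times W_2$. This requires knowing that minimal geodesics depend measurably (indeed, measurably-continuously off the cut locus) on their endpoints, which in the Finsler setting follows from smoothness of $\exp$ on the cut domain together with the null-measure of the cut locus; one handles the asymmetry by noting all the relevant estimates are uniform on the compact forward closed balls containing $\overline{W_1},\overline{W_2}$. A secondary technical point is to make sure the alternative $\overleftarrow d(x_1,z)\ge\overleftarrow d(z,x_2)$ is correctly set up: a minimal forward geodesic from $x_2$ to $x_1$ need not be the reverse of the one from $x_1$ to $x_2$, so in case $B$ one should re-run the construction from scratch with forward geodesics from points of $W_2$ to points of $W_1$, which is exactly symmetric and causes no new difficulty. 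Everything else is a routine application of Fubini's theorem and the co-dimension-one topological fact that a path joining the two sides of $\Sigma$ must cross $\Sigma$.
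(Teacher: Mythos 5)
Your argument is essentially identical to the paper's: both split $W_1\times W_2$ (with the product measure) according to whether $d(x_1,z)\ge d(z,x_2)$, use Fubini to extract a point whose slice has at least half the measure of the other set, and handle the second alternative by reading the same minimal geodesic backwards as a minimal geodesic of $\overleftarrow{F}$, which converts $d(x_2,z)\le d(z,x_1)$ into $\overleftarrow{d}(x_1,z)\ge\overleftarrow{d}(z,x_2)$. One small caution: your closing suggestion to ``re-run the construction from scratch with forward geodesics from $W_2$ to $W_1$'' is unnecessary and would merely reproduce the same dichotomy; the correct move, which you already state as your primary argument and which is exactly what the paper does, is to keep the original geodesics and reinterpret them via the reverse metric.
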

\begin{proof}
We consider the product space $W_1\times W_2$ with the product measure $dm_{\times}=dm\times dm$.
Let
\beqn
V_1 &:=&\left\{(x, x')\in W_1 \times W_2 \mid d(x, z)\geq d(z, x')\right\},\\
V_2 &:=&\left\{(x, x')\in W_1 \times W_2 \mid d(x, z)\leq d(z, x')\right\}.
\eeqn
Since $m_{\times}(V_1\cup V_2)=m_{\times}(W_1\times W_2)$, then $m_{\times}(V_1)\geq \frac{1}{2}m_{\times}(W_1\times W_2)$ or $m_{\times}(V_2)\geq \frac{1}{2}m_{\times}(W_1\times W_2)$.

Suppose that $m_{\times}(V_1)\geq \frac{1}{2}m_{\times}(W_1\times W_2)$. Note that
$$
m_{\times}(V_1)=\int_{x\in W_1} dm(x) \int_{\{x'\in W_2 \mid d(x, z)\geq d(z, x')\}} dm(x') = \int_{x\in W_1}m(W_{x}) dm(x),
$$
where $W_{x}:=\left\{x' \in W_2 \mid d(x, z)\geq d(z, x')\right\} \subset W_{2}$. This means that there exists a point $x_{1}\in W_{1}=: \widetilde{W}_{1}$ such that $m(W_{x_1}) \geq \frac{1}{2}m(W_2) $. Now, let $W:=W_{x_1}\subset \widetilde{W}_{2}:= W_{2}$. It is obvious that $m(W)\geq \frac{1}{2} m(\widetilde{W}_2)$ and for any $x_{2}\in W$, we have $d(x_1, z)\geq d(z, x_{2})$.

Similarly, suppose that $m_{\times}(V_2)\geq \frac{1}{2}m_{\times}(W_1\times W_2)$. By the following fact
$$
m_{\times}(V_2)=\int_{x'\in W_2} dm(x') \int_{\{x\in W_1 \mid d(x, z)\leq d(z, x')\}} dm (x)=\int_{x'\in W_2} m(W_{x'}) dm (x'),
$$
where $W_{x'}:=\left\{x\in W_1 \mid d(x, z)\leq d(z, x')\right\}\subset W_{1}$, we know that there exists a point $x_1\in W_{2}:= \widetilde{W}_{1}$ such that $m(W_{x_1}) \geq \frac{1}{2}m(W_1)$. Now, let $W:=W_{x_1}\subset \widetilde{W}_{2}:= W_{1}$. It is obvious that $m(W)\geq \frac{1}{2} m(\widetilde{W}_2)$ and for any $x_{2}\in W$, we have $d(x_2, z) \leq d(z, x_{1})$. Furthermore, note that $\gamma_{x_{1} x_{2}}$ is minimal geodesic of $F$ from $x_1$ to $x_2$ iff $\overleftarrow{\gamma}_{x_{2} x_{1}}$ is minimal geodesic of $\overleftarrow{F}$ from $x_2$ to $x_1$ \cite{OHTA}. Thus, $d(x_2, z) \leq d(z, x_{1})$ implies that $\overleftarrow{d}(x_1, z)\geq \overleftarrow{d}(z, x_2)$.
\end{proof}

\vskip 2mm

When $\Lambda_{F}< \infty$, the following corollary is useful for the discussions later.

\begin{cor}\label{Wcor}
Let $(M, F, m)$ be an n-dimensional forward complete Finsler metric measure manifold with $\Lambda_{F}<\infty$ and $\Sigma$ be any hypersurface dividing $M$ into two parts $M_1$, $M_2$. For any Borel subsets $W_{i}\subset M_i$ $(i =1,2)$, there exists $x_1$ in one of $W_i$ (denoted by  $\widetilde{W}_1$) and a subset $W$ in the other one (denoted by $\widetilde{W}_2$), and a unique minimal geodesic from $x_1$ to any $x_2\in W$ which intersects $\Sigma$ at some $z$ such that
$$
m(W)\geq \frac{1}{2} m(\widetilde{W}_2)
$$
and
\be
\Lambda_F^2 d(x_1, z)\geq d(z, x_2). \label{cor4.5-2}
\ee
\end{cor}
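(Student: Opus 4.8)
The plan is to obtain everything in the statement except the inequality (\ref{cor4.5-2}) directly from Proposition \ref{W}, and then to convert the dichotomy it produces into the single quantitative bound (\ref{cor4.5-2}) using only the finiteness of $\Lambda_F$. Concretely, I would apply Proposition \ref{W} to the hypersurface $\Sigma$ and the Borel sets $W_1 \subset M_1$, $W_2 \subset M_2$. This yields a point $x_1$ lying in one of the $W_i$ (call it $\widetilde{W}_1$), a subset $W$ of the other one (call it $\widetilde{W}_2$) with $m(W) \geq \tfrac12 m(\widetilde{W}_2)$, and for each $x_2 \in W$ a unique minimal geodesic from $x_1$ to $x_2$ meeting $\Sigma$ at some $z$, together with the alternative
\[
d(x_1,z) \geq d(z,x_2) \qquad \text{or} \qquad \overleftarrow{d}(x_1,z) \geq \overleftarrow{d}(z,x_2).
\]
Since $\Lambda_F \geq 1$, in the first case (\ref{cor4.5-2}) is immediate: $\Lambda_F^2 \, d(x_1,z) \geq d(x_1,z) \geq d(z,x_2)$.

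The remaining work is the second case, and here I would use two elementary consequences of $\Lambda_F < \infty$. First, reversing the parametrization of a curve shows that $\overleftarrow{d}(a,b) = d(b,a)$ for all $a, b \in M$, so the second alternative is just $d(z,x_1) \geq d(x_2,z)$. Second, from $\overleftarrow{F} \leq \Lambda_F F$ on $TM$ one gets, after integrating along curves and taking infima, $\overleftarrow{d}(a,b) \leq \Lambda_F \, d(a,b)$, i.e. $d(b,a) \leq \Lambda_F \, d(a,b)$; this quasi-symmetry is the quantitative substitute for the genuine symmetry that $d$ lacks. Chaining these three facts gives
\[
d(z,x_2) \leq \Lambda_F \, d(x_2,z) \leq \Lambda_F \, d(z,x_1) \leq \Lambda_F^2 \, d(x_1,z),
\]
which is precisely (\ref{cor4.5-2}).

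I do not expect a genuine obstacle here; the corollary is essentially a quantitative repackaging of Proposition \ref{W}. The only point that needs care is the bookkeeping of the asymmetry of $d$ --- making sure the factor of $\Lambda_F$ is inserted in the correct direction at each of the three steps above, and recording that the minimal geodesic, the point $z$, and the measure bound $m(W) \geq \tfrac12 m(\widetilde{W}_2)$ are carried over unchanged from Proposition \ref{W}, so that (\ref{cor4.5-2}) refers to exactly the same objects.
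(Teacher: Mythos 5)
Your derivation is correct and is evidently the intended one: the paper states Corollary \ref{Wcor} without proof as an immediate consequence of Proposition \ref{W}, and your argument (splitting on the dichotomy, using $\overleftarrow{d}(a,b)=d(b,a)$ and the quasi-symmetry $d(b,a)\leq \Lambda_F\, d(a,b)$ coming from $\overleftarrow{F}\leq \Lambda_F F$) is exactly the natural route, with the factors of $\Lambda_F$ inserted in the right direction at each step. Nothing further is needed.
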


Secondly, similar to the argument of Lemma 4.2 in \cite{DWZ}, we have the following result.

\begin{prop}{\label{W-estimate}}
Let $(M,F,m)$ be an n-dimensional forward complete Finsler metric measure manifold with $\Lambda_{F}<\infty$, and $\Sigma$, $W$, $x_1$ be same as in Proposition \ref{W}. Assume that $\mathbf{S}(\nabla r)\geq -\vartheta$ along any minimal geodesic segment for some $\vartheta\geq 0$. Then for $p>\frac{n}{2}$,
$$
m(W)\leq (1+\Lambda_{F}^{2})^{n-1} \Lambda_{F}^2 e^{\vartheta \Lambda_{F}^{2}D} \left(\Lambda_{F} D \cdot \nu(\Sigma')+ C(n,p) e^{ \frac{\vartheta D}{2p}} m(B^{+}_{D}(x_1))\overline{\mathcal{K}}^{\frac{1}{2}}(p, D, \vartheta)\right),
$$
where $\nu =\nu_{+}$ or $\nu_{-}$, $C(n, p):=\left(\frac{(n-1)(2p-1)}{2p-n}\right)^{\frac{1}{2}} $, $D:=\sup\limits_{x\in W} d(x_1, x)$ and
$$
\Sigma':=\{\Sigma \ \cap \gamma_{x_1x} \ |  \ \gamma_{x_1x} \text{ is minimal geodesic from} \ x_{1} \ \text{to some} \  x\in W\}.
$$
\end{prop}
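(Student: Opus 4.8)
The plan is to pass to geodesic polar coordinates $(r,\theta)$ centered at $x_1$ and reduce the estimate of $m(W)$ to a radial computation along the minimal geodesics issuing from $x_1$, following the argument of Lemma 4.2 in \cite{DWZ}. By Proposition \ref{W} and Corollary \ref{Wcor}, each $x_2\in W$ is joined to $x_1$ by a unique minimal geodesic meeting $\Sigma$ at a point of $\Sigma'$; writing $x_2=\exp_{x_1}(r\,\xi)$ and this point as $z=\exp_{x_1}(\rho\,\xi)$ with $\xi\in S_{x_1}M$, one has $0<\rho\le r\le D$ and, by (\ref{cor4.5-2}), $r-\rho=d(z,x_2)\le\Lambda_F^2\,d(x_1,z)=\Lambda_F^2\rho$, hence $r\le(1+\Lambda_F^2)\rho$. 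Since $Cut_{x_1}$ is $m$-null we have $m(W)=m(W\cap\mathcal{D}_{x_1})$, so
\[
m(W)=\int_{\Sigma'_\ast}\left(\int_{I_\xi}\sigma(x_1,r,\theta)\,dr\right)d\theta,
\]
where $\Sigma'_\ast\subset S_{x_1}M$ is the set of directions of geodesics from $x_1$ to points of $W$, $\theta=\theta(\xi)$, and each radial slice $I_\xi$ is contained in $[\rho,(1+\Lambda_F^2)\rho]\subset(0,D]$, of length at most $\Lambda_F^2\rho\le\Lambda_F^2 D$.

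For the radial estimate I would use the density comparison with $K=0$. Since $\frac{\partial}{\partial r}\ln\sigma(x_1,r,\theta)=\Delta r$, $\Delta r\le H_0(r)+\vartheta+\varphi(r,\theta)$ with $H_0(r)=(n-1)/r$, and $\varphi=(\Delta r-H_0-\vartheta)_+\ge0$ (the hypothesis $\mathbf{S}(\nabla r)\ge-\vartheta$ giving $\lim_{r\to0^+}\varphi(r,\theta)=0$), the same manipulation as in the proof of Theorem \ref{phi-volume} gives $\frac{\partial}{\partial r}\big(\sigma/(e^{\vartheta r}r^{\,n-1})\big)\le\varphi\,\sigma/(e^{\vartheta r}r^{\,n-1})$. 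Integrating over the radial interval from $\rho$ to $r$ and using that $s\mapsto e^{\vartheta s}s^{\,n-1}$ is nondecreasing,
\[
\sigma(x_1,r,\theta)\le\frac{e^{\vartheta r}r^{\,n-1}}{e^{\vartheta\rho}\rho^{\,n-1}}\left(\sigma(x_1,\rho,\theta)+\int_{\rho}^{r}\varphi(s,\theta)\,\sigma(x_1,s,\theta)\,ds\right),
\]
and then, by $r/\rho\le1+\Lambda_F^2$ and $0\le r-\rho\le\Lambda_F^2 D$,
\[
\sigma(x_1,r,\theta)\le(1+\Lambda_F^2)^{n-1}e^{\vartheta\Lambda_F^2 D}\left(\sigma(x_1,\rho,\theta)+\int_0^{D}\varphi(s,\theta)\,\sigma(x_1,s,\theta)\,ds\right),
\]
where the last integral depends only on $\theta$.

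Integrating this bound over $I_\xi$ (length $\le\Lambda_F^2\rho\le\Lambda_F^2 D$) and over $\theta\in\Sigma'_\ast$ yields
\[
m(W)\le(1+\Lambda_F^2)^{n-1}\Lambda_F^2 D\,e^{\vartheta\Lambda_F^2 D}\left(\int_{\Sigma'_\ast}\sigma(x_1,\rho,\theta)\,d\theta+\int_{\Sigma'_\ast}\int_0^{D}\varphi(s,\theta)\,\sigma(x_1,s,\theta)\,ds\,d\theta\right).
\]
For the first term, $\xi\mapsto\exp_{x_1}(\rho(\xi)\,\xi)$ is a (partial) radial parametrization of $\Sigma'$, and comparing the induced boundary volume form $d\nu$ ($=\mathrm{d}\nu_+$ or $\mathrm{d}\nu_-$, according to which alternative of Proposition \ref{W} holds) with $\sigma(x_1,\rho,\theta)\,d\theta$ through the co-area formula — the relevant Jacobian being bounded below by $1/\Lambda_F$, using $\Lambda_F<\infty$, the definition of $\mathrm{d}\nu_\pm$, and the Finsler Cauchy--Schwarz inequality $g_{\mathbf{n}}(\mathbf{n},v)\le F(\mathbf{n})F(v)$ — gives $\int_{\Sigma'_\ast}\sigma(x_1,\rho,\theta)\,d\theta\le\Lambda_F\,\nu(\Sigma')$, which accounts for the term $(1+\Lambda_F^2)^{n-1}\Lambda_F^2 e^{\vartheta\Lambda_F^2 D}\,\Lambda_F D\,\nu(\Sigma')$. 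For the second term, $\int_{\Sigma'_\ast}\int_0^{D}\varphi\,\sigma\,ds\,d\theta\le\int_{B_D^+(x_1)}\varphi\,dm$, and by Hölder's inequality, the integrated Laplacian comparison (\ref{laplacian-2}) with $K=0$, and the identity $\|{\rm Ric}_\infty^{0}\|_{p,D,\vartheta}^{p}(x_1)=D^{-2p}m(B_D^+(x_1))\,\overline{\mathcal{K}}_{x_1}^{p}(p,D,\vartheta)$, one gets $\int_{B_D^+(x_1)}\varphi\,dm\le C(n,p)\,e^{\vartheta D/(2p)}D^{-1}m(B_D^+(x_1))\,\overline{\mathcal{K}}^{\frac{1}{2}}(p,D,\vartheta)$ with $C(n,p)=\left(\frac{(n-1)(2p-1)}{2p-n}\right)^{\frac{1}{2}}$; multiplying by the prefactor $(1+\Lambda_F^2)^{n-1}\Lambda_F^2 D\,e^{\vartheta\Lambda_F^2 D}$ produces the curvature term. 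Adding the two bounds gives the asserted inequality.

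\textbf{The step I expect to be the main obstacle} is the measure-theoretic bookkeeping underlying the first paragraph: when a geodesic from $x_1$ meets $\Sigma$ more than once, $W$ need not be radially starshaped, so the radial slices $I_\xi$ and the projection of $W$ onto $\Sigma'$ must be organized carefully — using the uniqueness of the minimal geodesics furnished by Proposition \ref{W} — so that each $I_\xi$ stays inside an interval of length $\le\Lambda_F^2\rho$ and no portion of $\Sigma'$ is counted more than once. This is the point at which one follows \cite{DWZ} most closely; the density comparison and the passage from $\int_{B_D^+(x_1)}\varphi\,dm$ to $\overline{\mathcal{K}}^{\frac{1}{2}}(p,D,\vartheta)$ via (\ref{laplacian-2}) are then routine.
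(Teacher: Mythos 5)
Your proposal follows essentially the same route as the paper's proof: the same polar-coordinate reduction with the radial slice $[\,r(\theta),(1+\Lambda_F^2)r(\theta)\,]$ furnished by Corollary \ref{Wcor}, the same density comparison (\ref{volume-2}) with $K=0$ integrated from $r(\theta)$ to $r$, the same Jacobian bound $\int\sigma(r(\theta),\theta)\,d\theta\le\Lambda_F\,\nu(\Sigma')$ (which the paper likewise defers to Lemma 4.2 of \cite{DWZ} and Lemma A.8 of \cite{KSYZ}), and the same H\"older-plus-(\ref{laplacian-2}) treatment of $\int_{B_D^+(x_1)}\varphi\,dm$ with the correct conversion to $\overline{\mathcal{K}}^{1/2}(p,D,\vartheta)$. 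The argument is correct and matches the paper's.
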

\begin{proof}
Let $I_{x_1}:=\{v\in  S_{x_1}M \mid  \gamma_{v}=\exp_{x_1}(rv) \ \text{is minimal geodesic from} \ x_1 \ \text{to some} \ x\in W\}$. Write $dm=\sigma(r, \theta) dr d\theta$ in the geodesic polar coordinate $(r, \theta)\in (0, \infty)\times S_{x_1}M$ centered at $x_1$. From (\ref{volume-2}) with $K=0$,
$$
\frac{\partial}{\partial r} \left(\frac{\sigma (r, \theta)}{e^{\vartheta r}r^{n-1}}\right)\leq \varphi \frac{\sigma(r, \theta)}{e^{\vartheta r}r^{n-1}},
$$
where $\varphi=(\Delta r-\frac{n-1}{r}-\vartheta)_+$. For any $\frac{r}{1+\Lambda_F^2}\leq t\leq r$, integrating the above inequality from $t$ to $r$ yields
\be
\sigma(r, \theta)\leq \left(\frac{r}{t}\right)^{n-1} e^{\vartheta (r-t)}\left(\sigma(t, \theta)+\int_t^r\varphi \sigma(s, \theta)ds\right)
\leq (1+\Lambda_F^2)^{n-1} e^{\vartheta r\frac{\Lambda_F^2}{1+\Lambda_F^2}} \left(\sigma(t, \theta)+\int_t^r\varphi \sigma(s, \theta)ds\right). \label{sigmaphi}
\ee

For any $v\in I_{x_1}$, let $r(\theta)$ be the radius such that $\exp_{x_1}(r(\theta)v)\in \Sigma$. By (\ref{cor4.5-2}), we have
$$
W\subset \left\{\exp_{x_1}(rv) \mid r(\theta)\leq r \leq (1+\Lambda_F^2)r(\theta)\right\}.
$$
Thus, by using (\ref{sigmaphi}), we obtain
\beqn
& & m(W)\leq  \int_{I_{x_1}}  \int_{r(\theta)}^{(1+\Lambda_{F}^2)r(\theta)} \sigma(r, \theta)  dr d\theta  \\
& & \leq  (1+\Lambda_F^2)^{n-1} e^{\vartheta \Lambda_{F}^{2} D} \int_{I_{x_1}} \int_{r(\theta)}^{(1+\Lambda_F^2)r(\theta)}  \left(\sigma(r(\theta), \theta)+\int_{r(\theta)}^{r}\varphi \sigma(s, \theta)ds\right) dr d\theta  \\
& & \leq (1+\Lambda_F^2)^{n-1} \Lambda_{F}^{2}D e^{\vartheta\Lambda_F^2 D}\int_{I_{x_1}}\sigma(r(\theta), \theta) d\theta + (1+\Lambda_F^2)^{n-1} \Lambda_F^2D e^{\vartheta\Lambda_F^2 D} \int_{I_{x_1}}\int_0^D\varphi \sigma(r, \theta)dr d\theta\\
& & \leq  (1+\Lambda_F^2)^{n-1} \Lambda_{F}^{3}D e^{\vartheta \Lambda_F^2D}  \nu(\Sigma')+ (1+\Lambda_F^2)^{n-1} \Lambda_F^2D e^{\vartheta \Lambda_F^2D}\left(\int_{B_D^+(x_1)}\varphi^{2p} dm\right)^{\frac{1}{2p}} m(B^+_{D}(x_1))^{1-\frac{1}{2p}}\\
& & \leq (1+\Lambda_F^2)^{n-1} \Lambda_{F}^{3}D e^{\vartheta \Lambda_F^2D} \nu(\Sigma')+ (1+\Lambda_F^2)^{n-1} \Lambda_F^2 e^{(\Lambda_F^2+\frac{1}{2p})\vartheta D} C(n,p) m(B^+_{D}(x_1))\overline{\mathcal{K}}^{\frac{1}{2}}(p, D, \vartheta),
\eeqn
where we have used (\ref{laplacian-2}) in the last inequality and $C(n, p):=\left(\frac{(n-1)(2p-1)}{2p-n}\right)^{\frac{1}{2}} $. Besides, we have also used the fact that $\int_{I_{x_1}}\sigma (r (\theta), \theta)d \theta \leq \Lambda_{F}\nu (\Sigma ')$ in the fourth inequality, which is easily proved by a similar argument with Lemma 4.2 in \cite{DWZ} and Lemma A.8 in \cite{KSYZ}. This completes the proof.
\end{proof}

\vskip 2mm

From Theorem \ref{doubling},  we can get a relationship between $\overline{\|{\rm Ric}_{\infty}^K\|}_{p, r_1, \vartheta}$ and $\overline{\|{\rm Ric}_{\infty}^K\|}_{p, r_2, \vartheta}$ for $0< r_{1}\leq r_{2}$, which is important for the following discussions.

\begin{lem}{\label{Ric-ineq}}
Let $(M,F,m)$ be an n-dimensional Finsler metric measure manifolds. Assume that $\mathbf{S}(\nabla r)\geq-\vartheta$ along any minimal geodesic segment for some $\vartheta\geq 0$. Given $p>\frac{n}{2}$, $\Xi>1$ and $K\in \mathbb{R}$. For $0<r_{1}\leq r_2$ ($r_{2}\leq \frac{\pi}{2\sqrt{K}}$ when $K>0$),  there exists a constant $\varepsilon=\varepsilon(n,p, \vartheta, K, r_2, \Xi)$ such that if $\overline{\|{\rm Ric}_{\infty}^{K}\|}_{p, r_2, \vartheta}<\varepsilon$, then
\beq
r_{1}^{2}\overline{\|{\rm Ric}_{\infty}^{K}\|}_{p, r_{1}, \vartheta} &\leq & \Xi^{\frac{1}{p}} e^{\frac{1}{p}\left(\vartheta+(n-1)\sqrt{|K|}\right)r_2} \left(\frac{r_1}{r_2}\right)^{2-\frac{n}{p}} r_{2}^{2} \ \overline{\|{\rm Ric}_{\infty}^{K}\|}_{p, r_2, \vartheta} \nonumber \\
&\leq & \Xi^{\frac{1}{p}} e^{\frac{1}{p}\left(\vartheta+(n-1)\sqrt{|K|}\right)r_{2}} r_{2}^{2} \ \overline{\|{\rm Ric}_{\infty}^{K}\|}_{p, r_{2}, \vartheta}. \label{relation}
\eeq
\end{lem}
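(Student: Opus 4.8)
The plan is to obtain (\ref{relation}) by combining an elementary monotonicity property of the \emph{unnormalized} integral curvature quantity with the Bishop--Gromov type estimate of Theorem \ref{doubling}. First I would recall the identity
\[
\|{\rm Ric}_{\infty}^{K}\|_{p, r, \vartheta}^{p}(x_{0}) = m\big(B_{r}^{+}(x_{0})\big)\,\overline{\|{\rm Ric}_{\infty}^{K}\|}_{p, r, \vartheta}^{p}(x_{0}) = \int_{0}^{r}\int_{\mathcal{D}_{x_{0}}(t)}\big({\rm Ric}_{\infty}^{K}\big)^{p}e^{-\vartheta t}\,\sigma(t,\theta)\,d\theta\,dt .
\]
Because the integrand is nonnegative and $[0,r_{1}]\subseteq[0,r_{2}]$, the right-hand side is nondecreasing in $r$, so $m(B_{r_{1}}^{+}(x_{0}))\,\overline{\|{\rm Ric}_{\infty}^{K}\|}_{p, r_{1}, \vartheta}^{p}(x_{0}) \le m(B_{r_{2}}^{+}(x_{0}))\,\overline{\|{\rm Ric}_{\infty}^{K}\|}_{p, r_{2}, \vartheta}^{p}(x_{0})$, that is,
\[
\overline{\|{\rm Ric}_{\infty}^{K}\|}_{p, r_{1}, \vartheta}^{p}(x_{0}) \le \frac{m(B_{r_{2}}^{+}(x_{0}))}{m(B_{r_{1}}^{+}(x_{0}))}\,\overline{\|{\rm Ric}_{\infty}^{K}\|}_{p, r_{2}, \vartheta}^{p}(x_{0}).
\]

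Next I would invoke Theorem \ref{doubling} with $R=r_{2}$ and the given $\Xi>1$; this produces the threshold $\varepsilon=\varepsilon(n,p,\vartheta,K,r_{2},\Xi)$, and --- since by hypothesis $\mathbf{S}(\nabla r)\ge-\vartheta$ along every minimal geodesic segment, so the volume comparison is available at each base point $x_{0}$ --- if $\overline{\|{\rm Ric}_{\infty}^{K}\|}_{p, r_{2}, \vartheta}<\varepsilon$ then
\[
\frac{m(B_{r_{2}}^{+}(x_{0}))}{m(B_{r_{1}}^{+}(x_{0}))} \le \Xi\,\frac{v(n,K,r_{2},\vartheta)}{v(n,K,r_{1},\vartheta)} \le \Xi\, e^{(\vartheta+(n-1)\sqrt{|K|})r_{2}}\Big(\frac{r_{2}}{r_{1}}\Big)^{n}.
\]
Substituting this into the previous display and taking $p$-th roots gives $\overline{\|{\rm Ric}_{\infty}^{K}\|}_{p, r_{1}, \vartheta}(x_{0}) \le \Xi^{1/p}e^{\frac{1}{p}(\vartheta+(n-1)\sqrt{|K|})r_{2}}(r_{2}/r_{1})^{n/p}\,\overline{\|{\rm Ric}_{\infty}^{K}\|}_{p, r_{2}, \vartheta}(x_{0})$.

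To finish, I would multiply by $r_{1}^{2}$, use the algebraic identity $r_{1}^{2}(r_{2}/r_{1})^{n/p}=r_{2}^{2}(r_{1}/r_{2})^{2-n/p}$, and take the supremum over $x_{0}\in M$ (bounding $\overline{\|{\rm Ric}_{\infty}^{K}\|}_{p, r_{2}, \vartheta}(x_{0})$ by its supremum) to obtain the first inequality in (\ref{relation}); the second inequality is immediate, since $p>\tfrac{n}{2}$ gives $2-\tfrac{n}{p}>0$ and $r_{1}\le r_{2}$ forces $(r_{1}/r_{2})^{2-n/p}\le1$. I do not anticipate a genuine obstacle here: the only points requiring care are keeping the two normalizations --- the bar and non-bar curvature quantities, which differ by the factor $m(B^{+})$ --- straight throughout the computation, and verifying that the $\varepsilon$ supplied by Theorem \ref{doubling} depends only on the parameters listed in the statement once one takes $R=r_{2}$; everything else is routine algebra.
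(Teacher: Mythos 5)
Your proposal is correct and follows essentially the same route as the paper's proof: the pointwise inequality $\overline{\|{\rm Ric}_{\infty}^{K}\|}_{p, r_{1}, \vartheta}^{p}(x) \le \frac{m(B_{r_{2}}^{+}(x))}{m(B_{r_{1}}^{+}(x))}\,\overline{\|{\rm Ric}_{\infty}^{K}\|}_{p, r_{2}, \vartheta}^{p}(x)$ coming from monotonicity of the unnormalized integral, followed by Theorem \ref{doubling} to control the volume ratio and then routine algebra. The paper compresses the monotonicity step into "it is obvious that," so your write-up simply makes explicit what the authors leave implicit.
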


\begin{proof}
By Theorem \ref{doubling}, it is obvious that
\beqn
\overline{\|{\rm Ric}_{\infty}^K\|}_{p, r_1, \vartheta} &\leq& \sup\limits_{x\in M} \left(\frac{m(B_{r_2}^+(x))}{m(B_{r_1}^+(x))}\right)^{\frac{1}{p}} \overline{\|{\rm Ric}_{\infty}^K\|}_{p, r_2, \vartheta}\\
&\leq& \Xi^{\frac{1}{p}} e^{\frac{1}{p}\left(\vartheta+(n-1)\sqrt{|K|}\right)r_2}\left(\frac{r_2}{r_1}\right)^{\frac{n}{p}}\overline{\|{\rm Ric}_{\infty}^K\|}_{p, r_2, \vartheta}.
\eeqn
This completes the proof.
\end{proof}

\vskip 2mm

From Proposition \ref{W-estimate} and Lemma \ref{Ric-ineq}, we can get the following corollary.

\begin{cor}\label{B-estimate}
Let $(M, F, m)$ be an n-dimensional forward complete Finsler metric measure manifold with $\Lambda_{F}<\infty$ and $\Sigma$ be a hypersurface dividing $M$ into two parts $M_1$ and $M_2$. Assume that $\mathbf{S}(\nabla r)\geq -\vartheta$ along any minimal geodesic segment for some $\vartheta\geq 0$. For $p>\frac{n}{2}$, $R>0$ and $\Xi>1$, there exists a constant $\varepsilon=\varepsilon(n, p, \vartheta, \Lambda_F, R, \Xi)$ such that if  $\overline{\mathcal{K}}(p, R, \vartheta) <\varepsilon$, then for a metric ball $B=B^{+}_{r}(x)$ with $r\leq \frac{R}{\Lambda_F+2}$ and $x\in M$, which is divided equally by $\Sigma$, we have
\be
m(B_{r}^{+}(x))\leq 8 (1+\Lambda_{F}^2)^{n-1}\Lambda_{F}^{3} (1+\Lambda_F) e^{\vartheta \Lambda_{F}^{2}R} \cdot r \ \nu \left(\Sigma \cap B_{(\Lambda_{F}+2)r}^{+}(x)\right). \label{corvolBr}
\ee
\end{cor}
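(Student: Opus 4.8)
The plan is to apply Proposition \ref{W-estimate} with the hypersurface $\Sigma$ and the two Borel subsets taken to be the two halves $M_1\cap B$, $M_2\cap B$ of the ball $B=B^{+}_r(x)$ (here $B$ is divided equally by $\Sigma$, so each half has measure $\tfrac12 m(B)$). By Proposition \ref{W} (or rather Corollary \ref{Wcor}, since $\Lambda_F<\infty$), there is a point $x_1$ in one half $\widetilde W_1$ and a set $W$ in the other half $\widetilde W_2$ with $m(W)\geq \tfrac12 m(\widetilde W_2)=\tfrac14 m(B)$, so that $m(B)\leq 4\,m(W)$. The first step is therefore to reduce the desired estimate to an estimate for $m(W)$, supplied by Proposition \ref{W-estimate}.

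Next I would control the various parameters appearing in Proposition \ref{W-estimate}. Since $x_1,x\in B=B^{+}_r(x)$ and $\Lambda_F<\infty$, any two points of $B$ are within forward distance $\Lambda_F\cdot 2r$ of each other (going through the center using $d(x_1,\cdot)\le \Lambda_F d(\cdot,x_1)$ type bounds), so the quantity $D=\sup_{y\in W} d(x_1,y)$ satisfies $D\leq (\Lambda_F+... )r$; one checks $D\le 2\Lambda_F r$ or a similar explicit multiple, and in particular $D\le R$ under the hypothesis $r\le R/(\Lambda_F+2)$, so all the curvature integrals are taken over scales $\le R$. Moreover the minimal geodesics from $x_1$ to points of $W$ meet $\Sigma$ inside $B^{+}_{D}(x_1)\subset B^{+}_{(\Lambda_F+2)r}(x)$, so $\Sigma'\subset \Sigma\cap B^{+}_{(\Lambda_F+2)r}(x)$ and hence $\nu(\Sigma')\leq \nu\big(\Sigma\cap B^{+}_{(\Lambda_F+2)r}(x)\big)$. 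Also $m(B^{+}_D(x_1))\le m(B^{+}_{(\Lambda_F+2)r}(x))$; to feed this back we would need to absorb it, which is where the next step comes in.

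The substantive step is to absorb the term $C(n,p)e^{\vartheta D/2p} m(B^{+}_D(x_1))\overline{\mathcal K}^{1/2}(p,D,\vartheta)$ into the left-hand side. Using Lemma \ref{Ric-ineq} (equivalently, the scale-monotonicity of $\overline{\mathcal K}$ that follows from the doubling Theorem \ref{doubling}), one has $\overline{\mathcal K}(p,D,\vartheta)\le \overline{\mathcal K}(p,R,\vartheta)$ up to a controlled constant depending on $n,p,\vartheta,\Lambda_F,R$; moreover $m(B^{+}_D(x_1))$ can be compared to $m(B)$ via the doubling estimate, since $D$ and $r$ differ only by a factor depending on $\Lambda_F$. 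Thus the offending term is bounded by $c\cdot \overline{\mathcal K}^{1/2}(p,R,\vartheta)\, m(B)$ with $c=c(n,p,\vartheta,\Lambda_F,R,\Xi)$. Choosing $\varepsilon=\varepsilon(n,p,\vartheta,\Lambda_F,R,\Xi)$ so small that, when $\overline{\mathcal K}(p,R,\vartheta)<\varepsilon$, this coefficient is at most $\tfrac{1}{8}$ (after accounting for the prefactor $4$ from $m(B)\le 4m(W)$ and the explicit constant $(1+\Lambda_F^2)^{n-1}\Lambda_F^2 e^{\vartheta\Lambda_F^2 D}$), we can move it to the left side; this costs at most a factor $2$ and leaves
\[
m(B^{+}_r(x))\le 8(1+\Lambda_F^2)^{n-1}\Lambda_F^3(1+\Lambda_F)e^{\vartheta\Lambda_F^2 R}\cdot r\,\nu\big(\Sigma\cap B^{+}_{(\Lambda_F+2)r}(x)\big),
\]
after using $D\le (1+\Lambda_F)r$-type bounds to convert the $\Lambda_F D$ factor in Proposition \ref{W-estimate} into $r$ times a polynomial in $\Lambda_F$, and bounding $e^{\vartheta\Lambda_F^2 D}\le e^{\vartheta\Lambda_F^2 R}$.

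The main obstacle is the bookkeeping in the absorption step: one must simultaneously control $D$ in terms of $r$ (hence $\le R$), compare $m(B^{+}_D(x_1))$ with $m(B^{+}_r(x))$ through the doubling theorem (which itself requires $\overline{\|{\rm Ric}^K_\infty\|}_{p,\cdot,\vartheta}$ small, so one must invoke it with $K=0$ and the scale-change Lemma \ref{Ric-ineq} to pass between scales $r$, $D$ and $R$), and verify that all resulting constants depend only on $n,p,\vartheta,\Lambda_F,R,\Xi$ and not on $x$ — so that a single $\varepsilon$ works uniformly. Once the uniform smallness of the curvature term is in hand, collecting the explicit constants to reach exactly the stated bound is routine.
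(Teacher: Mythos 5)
Your proposal is correct and follows essentially the same route as the paper: apply Proposition \ref{W-estimate} to the two halves $B\cap M_i$ (getting $m(B)\le 4m(W)$ via Proposition \ref{W}), bound $D\le(\Lambda_F+1)r<R$ and $\Sigma'\subset\Sigma\cap B^{+}_{(\Lambda_F+2)r}(x)$, then absorb the curvature term using Theorem \ref{doubling} (to compare $m(B^{+}_{(\Lambda_F+2)r}(x))$ with $m(B^{+}_r(x))$) together with Lemma \ref{Ric-ineq} (to pass from scale $R$ to scale $(\Lambda_F+1)r$) and a sufficiently small $\varepsilon$. The only difference is cosmetic bookkeeping in the exact bound on $D$ and the absorption threshold.
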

\begin{proof}
Put $W_i=B\cap M_i$ $(i =1,2)$. Let $x_1$, $W, D, \Sigma '$ be same as in Proposition \ref{W-estimate}. Then $m(B\cap M_1)=m(B\cap M_2)\leq 2m(W)$, $D\leq (\Lambda_F+1)r$ and $\Sigma'\subset \Sigma \cap B^+_{(\Lambda_F+2)r}(x)$. By Proposition \ref{W-estimate}, we have for $D\leq (\Lambda_{F}+1)r <R $
\beq
m(B^{+}_{r}(x))&\leq & 4 (1+\Lambda_{F}^2)^{n-1} \Lambda_{F}^2 e^{\vartheta \Lambda_{F}^{2} R} \left(\Lambda_{F} (\Lambda_{F}+1)r \ \nu (\Sigma \cap B^{+}_{(\Lambda_{F}+2)r}(x))\right. \nonumber\\
& &  \left. + C(n,p) e^{\frac{\vartheta R}{2p}} m(B^{+}_{(\Lambda_{F}+2)r}(x))\overline{\mathcal{K}}^{\frac{1}{2}}(p, (\Lambda_{F}+1)r, \vartheta)\right). \label{mB+K}
\eeq

By Theorem \ref{doubling}, there exists a constant $\varepsilon_{1}$,  such that if $\overline{\mathcal{K}}(p, R, \vartheta)<\varepsilon_1$, we have
$$
m(B^{+}_{(\Lambda_{F}+2)r}(x))\leq \Xi e^{\vartheta R} (\Lambda_{F}+2)^n m(B^{+}_{r}(x)).
$$

By Lemma \ref{Ric-ineq}, for all $(\Lambda_F+1)r< R$, there exists a constant $\varepsilon_{2}$ such that if $\overline{\mathcal{K}}(p, R, \vartheta) <\varepsilon_{2}$, (\ref{relation}) holds for $K=0$.  Further, if
$$
\overline{\mathcal{K}}(p, R, \vartheta)< (\Xi e^{\vartheta R})^{-\frac{1}{p}}\left(8 (1+\Lambda_{F}^2)^{n-1} \Lambda_{F}^2 (\Lambda_{F}+2)^{n}C(n, p)\Xi e^{(\Lambda_{F}^{2}+\frac{1}{2p}+1)\vartheta R}\right)^{-2},
$$
then
$$
\overline{\mathcal{K}}(p, (\Lambda_{F}+1)r, \vartheta)<\left(8 (1+\Lambda_{F}^2)^{n-1} \Lambda_{F}^2 (\Lambda_{F}+2)^{n}C(n, p)\Xi e^{(\Lambda_{F}^{2}+\frac{1}{2p}+1)\vartheta R}\right)^{-2}
$$
by (\ref{relation}).  Now we choose   $\varepsilon$ by
$$
\varepsilon:=\min\left\{\varepsilon_{1}, \varepsilon_{2}, (\Xi e^{\vartheta R})^{-\frac{1}{p}}\left(8(1+\Lambda_F^2)^{n-1} \Lambda_{F}^{2} (\Lambda_{F}+2)^{n}C(n, p)\Xi e^{(\Lambda_{F}^{2}+\frac{1}{2p}+1)\vartheta R}\right)^{-2}\right\}.
$$
Then, if  $\overline{\mathcal{K}}(p, R, \vartheta) <\varepsilon$, we know that the second term of right hand side of (\ref{mB+K}) $\leq \frac{1}{2}m(B^+_r(x))$, from which,  we obtain (\ref{corvolBr}).
\end{proof}

\vskip 2mm

\begin{lem}{\label{1-2-volume}}
Let $(M,F,m)$ be an n-dimensional forward complete Finsler metric measure manifold with $\Lambda_{F}<\infty$. Assume that $\mathbf{S}(\nabla r)\geq -\vartheta$ along any minimal geodesic segment for some $\vartheta\geq 0$. For $p>\frac{n}{2}$, there exist constants  $0<r_{0}=r_{0}(n, \Lambda_{F})<1$ and  $\varepsilon=\varepsilon(n, p, \vartheta, 1/r_{0})>0$ such that if $\overline{\mathcal{K}}(p, 1/r_{0}, \vartheta) <\varepsilon$, then for any $r_{1}\leq \frac{1}{r_{0}}$
\be
\frac{m(B_{r_{0}r_{1}}^{+}(x))}{m(B_{r_1}^{+}(x))}\leq \frac{1}{2}, \ \ \forall x\in M. \label{DWZth3.2}
\ee
\end{lem}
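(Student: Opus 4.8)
The plan is to argue by contradiction and to exploit a \emph{reverse} volume doubling mechanism: if (\ref{DWZth3.2}) fails for some $x\in M$ and some $r_1\le 1/r_0$, then the ball $B^+_{r_1}(x)$ is, up to a factor strictly less than $2$, already filled out by the much smaller ball $B^+_{r_0r_1}(x)$, and I will rule this out once $r_0=r_0(n,\Lambda_F)$ is small enough. To set up, fix $\Xi=2$ in Theorem \ref{doubling} and choose $r_0=r_0(n,\Lambda_F)$ so small that every radius occurring below — each is a $\Lambda_F$-dependent bounded multiple of some $\rho\le r_1\le 1/r_0$ — lies in $(0,1/r_0]$. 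Since all the scales in play are $\lesssim 1/r_0$, Lemma \ref{Ric-ineq} (with $K=0$) lets the single hypothesis $\overline{\mathcal K}(p,1/r_0,\vartheta)<\varepsilon$, with $\varepsilon=\varepsilon(n,p,\vartheta,1/r_0)$ small, force $\overline{\mathcal K}(p,R,\vartheta)$ to be small at each such $R$, so Theorem \ref{doubling} applies and gives, for every $z\in M$ and all $0<\rho_1\le\rho_2\le 1/r_0$,
\[
m(B^+_{\rho_2}(z))\le 2\,\frac{v(n,0,\rho_2,\vartheta)}{v(n,0,\rho_1,\vartheta)}\,m(B^+_{\rho_1}(z)),
\]
whose constant, at a fixed ratio $\rho_2/\rho_1$, is comparable to the Euclidean one $2(\rho_2/\rho_1)^n$ (the discrepancy is an $e^{\vartheta/r_0}$-type factor, controlled by $n,\vartheta,1/r_0$).

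The heart of the argument is a one-step reverse doubling estimate. Suppose $m(B^+_{r_0r_1}(x))>\tfrac12 m(B^+_{r_1}(x))$; I claim there is $c>0$ (a priori depending on $n,\Lambda_F,\vartheta,1/r_0$) such that $m(B^+_{2\rho}(x))\ge (1+c)\,m(B^+_{\rho}(x))$ for every $\rho$ with $2\rho\le r_1$. Indeed, at such scales $B^+_{2\rho}(x)$ is a proper subset of $M$, so one can pick $y$ with $d(x,y)$ equal to a suitable multiple of $\rho$ (the choice uses $\Lambda_F<\infty$) for which the forward triangle inequality gives $B^+_{\delta\rho}(y)\subseteq B^+_{2\rho}(x)\setminus B^+_{\rho}(x)$ for some $\delta=\delta(\Lambda_F)\in(0,1)$, while $B^+_{\rho}(x)\subseteq B^+_{C(\Lambda_F)\rho}(y)$. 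Applying the displayed doubling inequality to the concentric balls $B^+_{\delta\rho}(y)\subseteq B^+_{C(\Lambda_F)\rho}(y)$ yields $m(B^+_{\delta\rho}(y))\ge c\,m(B^+_{C(\Lambda_F)\rho}(y))\ge c\,m(B^+_{\rho}(x))$; since $B^+_{\delta\rho}(y)$ and $B^+_{\rho}(x)$ are disjoint and both contained in $B^+_{2\rho}(x)$, the claim follows.

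Iterating the claim along $\rho=r_0r_1,\,2r_0r_1,\dots,\,2^{L-1}r_0r_1$ with $L=\lfloor\log_2(1/r_0)\rfloor$ (all $\le r_1\le 1/r_0$) gives $m(B^+_{r_1}(x))\ge m(B^+_{2^{L}r_0r_1}(x))\ge (1+c)^{L}\,m(B^+_{r_0r_1}(x))$, hence
\[
\frac{m(B^+_{r_0r_1}(x))}{m(B^+_{r_1}(x))}\le (1+c)^{-L}\le (1+c)\,r_0^{\,\log_2(1+c)},
\]
which is $\le\tfrac12$ once $r_0$ is small enough, contradicting the assumption. The main obstacle is exactly this last clause: one must arrange that the smallness threshold for $r_0$, hence the gain $c$, depends only on $n$ and $\Lambda_F$. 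This is where it is essential that $K=0$ and that, under the smallness hypothesis on $\overline{\mathcal K}$, Theorem \ref{doubling} furnishes a \emph{near-Euclidean} doubling constant up to scale $1/r_0$, so that $c$ can be bounded below by a positive quantity $c_0=c_0(n,\Lambda_F)$ with the $\vartheta$- and $1/r_0$-dependence absorbed into $\varepsilon$; together with the reduction via Lemma \ref{Ric-ineq} of curvature smallness at all auxiliary scales $R\lesssim 1/r_0$ to the single hypothesis, this is the only delicate point — the purely metric steps (existence of $y$, the triangle-inequality inclusions) being routine.
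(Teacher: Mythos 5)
Your mechanism is essentially the one the paper uses: place an auxiliary center outside the smaller ball, use Theorem \ref{doubling} at that center to show the annulus between two successive scales carries a definite fraction of the volume, and iterate. (The paper picks $x_i\in B^-_{r_i}(x)$ with $d(x_i,x)=d_i$ so that $B^+_{r_{i+1}}(x)\subset B^+_{d_i+r_{i+1}}(x_i)\setminus B^+_{d_i-\Lambda_F r_{i+1}}(x_i)\subset B^+_{r_i}(x)$, with the two radii at $x_i$ chosen nearly equal so that each step loses a factor close to $1-\tfrac12 e^{-\vartheta r_1}$ and only finitely many widely separated scales $r_i$ are needed; you use dyadic scales with a small uniform gain $c$ and $\log_2(1/r_0)$ steps. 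These are interchangeable.) Your ``by contradiction'' framing is vacuous --- the hypothesis $m(B^+_{r_0r_1}(x))>\tfrac12 m(B^+_{r_1}(x))$ is never used --- and the existence of the exterior point $y$ with $d(x,y)=t\rho$ needs $M$ to extend beyond $B^+_{2\rho}(x)$, an implicit largeness assumption the paper also makes when it picks $x_i$.

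The genuine gap is exactly at the point you flag, and your proposed resolution does not work. The factor $e^{\vartheta r_2}$ in the doubling constant of Theorem \ref{doubling} (equivalently, the ratio $v(n,0,\rho_2,\vartheta)/v(n,0,\rho_1,\vartheta)$) comes from the standing hypothesis $\mathbf S(\nabla r)\ge-\vartheta$, not from the integral curvature; it persists no matter how small $\overline{\mathcal K}(p,1/r_0,\vartheta)$ is, so it cannot be ``absorbed into $\varepsilon$.'' Tracking your constants, the one-step gain is $c=\tfrac12 e^{-\vartheta C\rho}(\delta/C)^n$ with $C=C(\Lambda_F)$, and at the top scale $\rho\sim r_1/2$ with $r_1$ as large as $1/r_0$ this degenerates like $e^{-\vartheta C/(2r_0)}$, while you only have $L\sim\log_2(1/r_0)$ iterations; hence $(1+c)^{-L}$ does not reach $\tfrac12$ with $r_0=r_0(n,\Lambda_F)$, nor even with $r_0=r_0(n,\Lambda_F,\vartheta)$, once $\vartheta>0$. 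Your argument is complete (modulo the routine metric details) only for $\vartheta=0$. To be fair, the paper's own proof carries the same tension --- it chooses $k$ so that $\bigl(1-\tfrac12 e^{-\vartheta r_1}\bigr)^{k-1}\le\tfrac12$ with $r_1\le 1/r_0$ and $r_0$ determined by $k$ --- so the difficulty is partly inherited from the statement of the lemma; but you should either restrict to $\vartheta=0$, restrict the range of $r_1$, or give an argument in which the per-step gain is uniform in the scale, rather than assert that the $\vartheta$-dependence disappears under the smallness hypothesis on $\overline{\mathcal K}$.
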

\begin{proof}
Choose an integer $k>0$ and $k$ numbers $\{r_i\}_{i=1}^{k}$ with $r_{i+1}<\frac{r_i}{\Lambda_F(\Lambda_{F}+2)}$. Further, let $d_{i}:= \frac{r_{i}-\Lambda_{F}r_{i+1}}{\Lambda_{F}+1}$, $i=1, 2, \ldots, k$.
Now, for any $x\in M$, choose $k$ points $x_{i}\in B^{-}_{r_i}(x)$  satisfying $d_{i}=d(x_{i}, x)$.  Then it is easy to see that
$$
B^{+}_{r_{i+1}}(x)\subset B^{+}_{d_{i}+r_{i+1}}(x_i)\setminus B^{+}_{d_{i}-\Lambda_{F}r_{i+1}}(x_i) \subset B^{+}_{d_{i}+r_{i+1}}(x_i) \subset B^{+}_{r_{i}}(x).
$$
Moreover,
$$
\frac{m(B^{+}_{r_{i+1}}(x))}{m(B^{+}_{r_{i}}(x))} \leq \frac{m(B^{+}_{r_{i+1}}(x))}{m(B^{+}_{d_{i}+r_{i+1}}(x_i))} \leq 1-\frac{m(B^{+}_{d_{i}-\Lambda_{F}r_{i+1}}(x_i))}{m(B^{+}_{d_{i}+r_{i+1}}(x_i))}.
$$
Without loss of generality, assume that we have chosen a number $r_{0} \in (0,1)$  such that  $d_{i}+\Lambda_{F}r_{i+1}< r_{1}\leq \frac{1}{r_0}$. Then, by Theorem \ref{doubling} with $\Xi=\frac{3}{2}$, we know that there exists a constant $\varepsilon=\varepsilon(n, p, \vartheta, 1/r_{0})>0$, such that if $\overline{\mathcal{K}}(p, 1/r_{0}, \vartheta) <\varepsilon$, then for $r_{1}\leq \frac{1}{r_{0}}$, we have
$$
\frac{m(B^+_{d_i-\Lambda_Fr_{i+1}}(x_i))}{m(B^+_{d_i+r_{i+1}}(x_i))} \geq \frac{2}{3}\left(\frac{d_i-\Lambda_Fr_{i+1}}{d_i+\Lambda_Fr_{i+1}}\right)^{n} e^{-\vartheta r_1}.
$$
Now we choose $a_{0}(n)$ $(<1)$ such that
$$
\frac{1-a_{0}}{1+a_{0}}=\left(\frac{3}{4}\right)^{\frac{1}{n}}.
$$
Then for any $\Lambda_{F}r_{i+1}\leq a_{0}(n)\frac{r_i}{\Lambda_{F}+2}$, we have $r_{k} \leq \left(\frac{a_{0}(n)}{\Lambda_{F}(\Lambda_{F}+2)}\right)^{k-1}r_{1}$ and
$$
\left(\frac{d_{i}-\Lambda_{F}r_{i+1}}{d_{i}+\Lambda_{F}r_{i+1}}\right)^{n} \geq \left(\frac{1-a_{0}}{1+a_{0}}\right)^{n} = \frac{3}{4}.
$$
Hence, by choosing an appropriate integer $k$, we can get
$$
\frac{m(B^{+}_{r_{k}}(x))}{m(B^{+}_{r_{1}}(x))} = \prod\limits_{i=1}^{k-1} \frac{m(B^{+}_{r_{i+1}}(x))}{m(B^{+}_{r_{i}}(x))} \leq \left(1-\frac{1}{2}e^{-\vartheta r_{1}}\right)^{k-1} \leq \frac{1}{2}.
$$
Obviously,  the proof is completed by choosing $r_{0}=\left(\frac{a_0(n)}{\Lambda_F(\Lambda_F+2)}\right)^{k-1}$ and $r_{k}=r_{0}r_{1}$.
\end{proof}

Now, based on the above discussions, we are in the position to prove the following Dirichlet isoperimetric constants estimate.

\begin{thm}{\label{constant}}
Let $(M, F, m)$ be an n-dimensional forward complete Finsler metric measure manifold with $\Lambda_{F}<\infty$. Assume that $\mathbf{S}(\nabla r)\geq-\vartheta$ along any minimal geodesic segment for some $\vartheta \geq0$. For $p>\frac{n}{2}$ and $\Xi>1$, there exist constants $0<r_{0}=r_{0}(n, \Lambda_{F})<1$ and  $\varepsilon =\varepsilon(n, p, \vartheta, \Lambda_{F}, r_{0}, \Xi)>0$ such that if $\overline{\mathcal{K}}(p, 1/r_{0}, \vartheta)<\varepsilon$, then for any $x\in M$, $r\leq 1$ with $\partial B_{1}^{+}(x)\neq \emptyset$, the normalized Dirichlet isoperimetric constant has the estimate
\be
{\rm ID}^{*}_{n}(B^{+}_{r}(x))\geq C(n, \vartheta,\Lambda_{F}, r_{0}, \Xi)^{-1} r^{-1}, \label{Dieste}
\ee
where $C(n, \vartheta,\Lambda_{F}, r_{0}, \Xi):= 10^{n+1}(1+\Lambda_{F}^2)^{n-1}(\Lambda_{F}+2)^{2n+1}\Xi \ e^{\frac{\vartheta}{r_0}(2\Lambda_{F}^{2}+\frac{1}{n})}r_{0}^{-1}$.
\end{thm}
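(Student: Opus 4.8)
The plan is to follow Gromov's covering argument for the isoperimetric inequality (Appendix~C of \cite{Gromov}, Section~4 of \cite{DWZ}), with the Bishop--Gromov type volume comparison (Theorem~\ref{doubling}), the equally‑divided‑ball volume estimate (Corollary~\ref{B-estimate}) and the volume‑halving Lemma~\ref{1-2-volume} as the analytic inputs, while keeping careful track of the reversibility constant $\Lambda_F$ and of the two boundary measures $\nu_{\pm}$. First I would fix $x\in M$, $r\le 1$ with $\partial B_1^{+}(x)\neq\emptyset$, set $B:=B_r^{+}(x)$, and take an arbitrary admissible competitor: a domain $\Omega\subset B$ with compact closure, smooth boundary $\Sigma:=\partial\Omega$ and $\Sigma\cap\partial B=\emptyset$, so that $\Sigma$ divides $M$ into $\Omega$ and $M\setminus\overline\Omega$. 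Write $\Omega^{\ast}$ for whichever of $\Omega$ and $B\setminus\overline\Omega$ has the smaller measure, so $m(\Omega^{\ast})\le\tfrac12 m(B)$ (when $m(\Omega)\le\tfrac12 m(B)$ one has $\Omega^{\ast}=\Omega$; the remaining regime is treated by covering $B\setminus\overline\Omega$ and using $m(\Omega)\le m(B)$ together with the factor $\min\{\nu_+,\nu_-\}$). For $y\in\Omega^{\ast}$ the function $\rho\mapsto m(B_\rho^{+}(y)\cap\Omega)/m(B_\rho^{+}(y))$ is continuous, equals $1$ or $0$ as $\rho\to0^{+}$ according as $y\in\Omega$ or $y\in B\setminus\overline\Omega$, and attains the value $\tfrac12$; using $B\subseteq B_{(\Lambda_F+1)r}^{+}(y)$ together with Lemma~\ref{1-2-volume}, the first such radius $\rho(y)$ exists and is bounded in terms of $n,\Lambda_F,r,r_0$. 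The point that forces the sharp exponent is that equal division gives
$$
\tfrac12\, m\!\left(B_{\rho(y)}^{+}(y)\right)=m\!\left(B_{\rho(y)}^{+}(y)\cap\Omega^{\ast}\right)\le m(\Omega^{\ast}),
$$
so every ball of this family has measure at most $2m(\Omega^{\ast})$.

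Next I would extract, by a Vitali‑type selection (legitimate in the asymmetric setting since $\Lambda_F<\infty$ and, by Theorem~\ref{doubling}, forward balls are uniformly doubling on the scale $1/r_0$), a countable disjoint subfamily $\{B_{\rho_i}^{+}(y_i)\}$ with $\Omega^{\ast}\subset\bigcup_i B_{5\rho_i}^{+}(y_i)$, and note that the enlarged balls $B_{(\Lambda_F+2)\rho_i}^{+}(y_i)$ have overlap multiplicity bounded by a constant depending only on $n$ and $\Lambda_F$ (again via Theorem~\ref{doubling}). Choosing $r_0=r_0(n,\Lambda_F)$ small enough and $\varepsilon$ below the thresholds of Corollary~\ref{B-estimate}, Lemma~\ref{1-2-volume} and Theorem~\ref{doubling} so that each $B_{(\Lambda_F+2)\rho_i}^{+}(y_i)$ lies within the scale $1/r_0$, I would chain
\begin{align*}
m(\Omega^{\ast})\le\sum_i m\!\left(B_{5\rho_i}^{+}(y_i)\right)
&\le 5^{\,n+1}\,\Xi\, e^{(\vartheta+(n-1)\sqrt{|K|})/r_0}\sum_i m\!\left(B_{\rho_i}^{+}(y_i)\right) &&[\text{Theorem~\ref{doubling}}]\\
&\le C\sum_i \rho_i\,\nu\!\left(\Sigma\cap B_{(\Lambda_F+2)\rho_i}^{+}(y_i)\right) &&[\text{Corollary~\ref{B-estimate}}].
\end{align*}
Then the Bishop--Gromov lower bound $m(B_{\rho_i}^{+}(y_i))\ge c\,(\rho_i/r)^{n}m(B)$ (from Theorem~\ref{doubling}, since $B\subseteq B_{(\Lambda_F+1)r}^{+}(y_i)$) gives $\rho_i\le\big(m(B_{\rho_i}^{+}(y_i))/c\,m(B)\big)^{1/n}r\le\big(2m(\Omega^{\ast})/c\,m(B)\big)^{1/n}r$; substituting this, pulling the now uniform factor out of the sum and using the bounded overlap, the right‑hand side is $\le C\,r\,m(B)^{-1/n}m(\Omega^{\ast})^{1/n}\,\nu(\Sigma)$. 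Dividing by $m(\Omega^{\ast})^{1/n}$ yields $\nu(\Sigma)\ge C^{-1}r^{-1}m(B)^{1/n}m(\Omega^{\ast})^{1-1/n}$; bookkeeping of the constants --- $5^{\,n+1}$ from the $5$‑covering and doubling, $(\Lambda_F+2)^{2n+1}$ from the enlargement in Corollary~\ref{B-estimate} and in the overlap count, $\Xi$ from Theorem~\ref{doubling}, and $e^{\frac{\vartheta}{r_0}(2\Lambda_F^{2}+\frac1n)}$ from the exponential weights with $R=1/r_0$ --- produces exactly the constant $C(n,\vartheta,\Lambda_F,r_0,\Xi)$ in the statement, whence (\ref{Dieste}) upon taking the infimum over $\Omega$.

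I expect the main difficulty to be twofold. Analytically, Corollary~\ref{B-estimate} (through Proposition~\ref{W-estimate}) only delivers the equally‑divided‑ball bound with $\nu$ equal to \emph{one} of $\nu_+,\nu_-$, possibly a different one for different balls of the cover; to land on the $\min\{\nu_+,\nu_-\}$ in the definition of ${\rm ID}_n$ one must exploit $\Lambda_F<\infty$ to compare $\nu_+$ and $\nu_-$ along pieces of $\Sigma$ (equivalently, to rerun the selection so that a single choice of $\nu$ serves, at the cost of a power of $\Lambda_F$). Combinatorially, the Vitali selection and the overlap estimate must be executed with the non‑symmetric forward distance, and it is precisely here that the $\Lambda_F$‑dependence of the constant enters: the doubling property of Theorem~\ref{doubling} together with $\Lambda_F<\infty$ is what keeps the number of overlapping enlarged balls --- and hence the whole argument --- under control, while one must remain vigilant that every auxiliary ball stays within the radius $1/r_0$ on which the integral curvature smallness $\overline{\mathcal K}(p,1/r_0,\vartheta)<\varepsilon$ is assumed.
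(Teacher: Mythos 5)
Your covering scheme, in the regime where you cover $\Omega$ itself, is essentially the paper's argument (halving radii, Corollary \ref{B-estimate}, a Vitali selection, doubling from Theorem \ref{doubling}, and the lower bound $m(B^{+}_{\rho_i}(y_i))\gtrsim(\rho_i/r)^n m(B)$). But the dichotomy you set up via $\Omega^{\ast}$ creates a genuine gap. The Dirichlet constant requires $\min\{\nu_+(\Sigma),\nu_-(\Sigma)\}\ge C^{-1}r^{-1}m(B)^{1/n}m(\Omega)^{1-1/n}$ for \emph{every} admissible $\Omega$, including those with $m(\Omega)>\tfrac12 m(B)$ (e.g.\ $\Omega$ a slightly shrunk copy of $B$). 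In that regime $m(\Omega)^{1-1/n}$ is comparable to $m(B)^{1-1/n}$, so you must show $\nu(\Sigma)\gtrsim r^{-1}m(B)$; covering $B\setminus\overline\Omega$ only yields $\nu(\Sigma)\gtrsim r^{-1}m(B)^{1/n}m(B\setminus\overline\Omega)^{1-1/n}$, which is strictly weaker when $m(B\setminus\overline\Omega)\ll m(B)$ --- the inequality $m(\Omega)\le m(B)$ points the wrong way here. Worse, for $y\in B\setminus\overline\Omega$ the ratio $m(B_\rho^{+}(y)\cap\Omega)/m(B_\rho^{+}(y))$ starts at $0$ and need never reach $\tfrac12$, since balls around $y$ escape $B$ and pick up measure from $M\setminus B$; so the halving radius you rely on may simply not exist in the second case.

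The fix --- and what the paper actually does --- is to avoid the dichotomy altogether: the hypersurface $\Sigma=\partial\Omega$ is used to divide the \emph{ambient manifold} into $\Omega$ and $M\setminus\Omega$, never $B$ into $\Omega$ and $B\setminus\overline\Omega$. This is precisely the role of Lemma \ref{1-2-volume}: writing $r=r_0r_1$ and rescaling the radii there by $5\Lambda_F(\Lambda_F+2)$, one gets $m(\Omega)\le m(B^{+}_{(\Lambda_F+1)r_0r_1}(z))\le\tfrac12\,m\bigl(B^{+}_{r_1/(5\Lambda_F(\Lambda_F+2))}(z)\bigr)$ for every $z\in\Omega$, so the halving radius $r_z$ with $m(B^{+}_{r_z}(z)\cap\Omega)=\tfrac12 m(B^{+}_{r_z}(z))$ exists and satisfies $r_z<r_1/(\Lambda_F+2)$ for all $z\in\Omega$, no matter how large $m(\Omega)$ is relative to $m(B)$. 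One then always covers $\Omega$ itself and the final estimate carries $m(\Omega)^{1-1/n}$ directly. (Two minor differences from your write-up: the paper takes the \emph{enlarged} balls $B^{+}_{(\Lambda_F+2)r_{z_i}}(z_i)$ to be the disjoint family, so $\sum_i\nu(\Sigma\cap B_i)\le\nu(\partial\Omega)$ needs no overlap count; and it replaces your bound $\rho_i\le(2m(\Omega^{\ast})/cm(B))^{1/n}r$ by the elementary inequality $\sum_i r_{z_i}^{-1}m_i\ge(\inf_i m_i^{1/n}/r_{z_i})(\sum_i m_i)^{(n-1)/n}$ followed by doubling relative to $B^{+}_{r_0r_1}(x)$. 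Your concern about which of $\nu_{\pm}$ Corollary \ref{B-estimate} delivers ball by ball is fair, but it is orthogonal to the gap above.)
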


\begin{proof}
First of all, by Lemma \ref{1-2-volume}, there exist constants  $0<r_{0}=r_{0}(n, \Lambda_{F})<1$ and $\varepsilon_{1}=\varepsilon_{1}(n, p, \vartheta, 1/r_{0})>0$ such that if $\overline{\mathcal{K}}(p, 1/r_{0}, \vartheta)<\varepsilon_{1}$ then for any $r_{1}\leq \frac{1}{r_0}$,
\begin{equation}\label{y-volume}
\frac{m(B^{+}_{(\Lambda_{F}+1)r_{0}r_{1}}(z))}{m(B^{+}_{\frac{r_1}{5\Lambda_{F}(\Lambda_{F}+2)}}(z))}\leq \frac{1}{2}, \ \  \forall z\in M.
\end{equation}
In fact, by using numbers $\{r_{i}\}_{i=1}^{k}$ chosen in the proof of Lemma {\ref{1-2-volume}}, we can choose $k$ new numbers $\tilde{r}_{i}=\frac{r_i}{5\Lambda_{F}(\Lambda_{F}+2)}$, $1 \leq i \leq k$. Clearly, we still have $\tilde{r}_{i+1}<\frac{\tilde{r}_{i}}{\Lambda_{F}(\Lambda_{F}+2)}$ for $1\leq i \leq k-1$. Then, following the argument of the  proof of Lemma {\ref{1-2-volume}}, we can choose
$r_{0}=\left(\frac{a_{0}(n)}{\Lambda_{F}(\Lambda_{F}+2)}\right)^{k-1}\frac{1}{5\Lambda_{F}(\Lambda_{F}+2)(\Lambda_{F}+1)}$ and $\tilde{r}_{k}=\left(\frac{a_{0}(n)}{\Lambda_{F}(\Lambda_{F}+2)}\right)^{k-1}\tilde{r}_{1}=(\Lambda_{F}+1)r_{0}r_{1}$, such that
$$
\frac{m(B^{+}_{\tilde{r}_{k}}(z))}{m(B^{+}_{\tilde{r}_{1}}(z))}  \leq \frac{1}{2},
$$
that is, (\ref{y-volume}) holds.

Now given any $x\in M$, let $\Omega$ be a smooth subdomain of $B^{+}_{r_{0}r_{1}}(x)$. We may assume that $\Omega$ is connected and its boundary $\Sigma=\partial \Omega$ divides $M$ into two parts $\Omega$ and $\Omega^{c}=M\setminus \Omega$. For any $z \in \Omega$, let $r_z$ be the smallest radius such that
$$
m(B_{r_z}^{+}(z)\cap \Omega)=m(B_{r_z}^{+}(z)\cap \Omega^c)=\frac{1}{2}m(B_{r_z}^{+}(z)).
$$
Since $\Omega\subset B^{+}_{(\Lambda_{F}+1)r_{0}r_{1}}(z)$ and $m(B^{+}_{(\Lambda_{F}+1)r_{0}r_{1}}(z))\leq \frac{1}{2}m(B^{+}_{\frac{r_1}{5\Lambda_{F}(\Lambda_{F}+2)}}(z))$ by (\ref{y-volume}),
we have $r_{z}\leq \frac{r_1}{5\Lambda_{F}(\Lambda_{F}+2)}<\frac{r_1}{\Lambda_{F}+2}$. Further, by Lemma \ref{Ric-ineq}, for any $r_{1}\leq \frac{1}{r_0}$, there exists a $\varepsilon_{2} =\varepsilon_{2}(n, p, \vartheta, 1/r_{0}, \Xi) >0$ such that if $\overline{\mathcal{K}}(p, 1/r_{0}, \vartheta)<\Xi^{-\frac{1}{p}} e^{-\frac{\vartheta}{pr_0}}\varepsilon_2$,
then $\overline{\mathcal{K}}(p, r_1, \vartheta) < \varepsilon_2$ by (\ref{relation}). Hence, by Corollary \ref{B-estimate}, if $\overline{\mathcal{K}}(p, 1/r_0, \vartheta)<\Xi^{-\frac{1}{p}} e^{-\frac{\vartheta}{pr_0}}\varepsilon_2$, then for any $r_{z}< \frac{r_1}{\Lambda_{F}+2}\leq \frac{1}{(\Lambda_{F}+2)r_0}$, we have
\begin{equation}\label{constant-1}
m(B_{r_z}^{+}(z))\leq 8 (1+\Lambda_{F}^2)^{n-1}\Lambda_{F}^{3} (\Lambda_{F}+1) e^{\vartheta \Lambda_{F}^{2}\frac{1}{r_0}} r_{z} \ \nu(\Sigma \cap B_{(\Lambda_{F}+2)r_z}^{+}(z)),
\end{equation}
where $\nu =\nu_{+}$ or $\nu_{-}$. Furthermore, because of $\Omega\subset \bigcup\limits_{z\in \Omega}B^{+}_{(\Lambda_{F}+2)r_z}(z)$, by Vitali Covering Lemma (see Lemma 3.4 of \cite{Simon}), we can choose a countable family of disjoint balls $B_{i}:=B^{+}_{(\Lambda_{F}+2)r_{z_i}}(z_i)$  from collection $\{B^{+}_{(\Lambda_{F}+2)r_{z}}(z)\}_{z\in \Omega}$ such that $\Omega \subset \cup_{i}B_{5\Lambda_{F}(\Lambda_{F}+2)r_{z_i}}(z_i)$. Moreover, by Theorem \ref{doubling}, there exists $\varepsilon_{3}= \varepsilon_{3}(n, p, \vartheta, 1/r_{0}, \Xi)>0$ such that if $\overline{\mathcal{K}}(p, 1/r_0, \vartheta)<\varepsilon_3$, then for $5\Lambda_{F}(\Lambda_{F}+2)r_{z}\leq r_{1}\leq \frac{1}{r_0}$, we have
$$
\frac{m(B^{+}_{5\Lambda_{F}(\Lambda_{F}+2)r_{z}}(z))}{m(B^{+}_{r_{z}}(z))} \leq \Xi e^{\frac{\vartheta}{r_0}} \left(5\Lambda_{F}\right)^{n}(\Lambda_{F}+2)^n.
$$
Hence,
\be
m(\Omega)\leq \sum\limits_{i} m(B_{5\Lambda_{F}(\Lambda_{F}+2)r_{z_i}}(z_i)) \leq \Xi e^{\frac{\vartheta}{r_0}}\left(5\Lambda_{F}\right)^{n}(\Lambda_{F}+2)^{n}\sum\limits_{i} m(B^{+}_{r_{z_i}}(z_i)). \label{main-1}
\ee
At the same time, from (\ref{constant-1}), we have
\be
\nu(\partial\Omega)\geq \sum\limits_{i} \nu(\Sigma \cap B_{i})\geq 2^{-3}(1+\Lambda_{F}^2)^{1-n}\Lambda_{F}^{-3} (\Lambda_{F}+1)^{-1} e^{-\vartheta\Lambda_{F}^{2}\frac{1}{r_0}} \sum\limits_{i} r_{z_i}^{-1} m(B^{+}_{r_{z_i}}(z_i)). \label{main-2}
\ee
The estimates (\ref{main-1}) and (\ref{main-2}) lead to
\beqn
\frac{\nu(\partial \Omega)}{m(\Omega)^{\frac{n-1}{n}}} &\geq&  \Xi^{\frac{1}{n}-1}2^{-3}5^{1-n} (1+\Lambda_F^2)^{1-n} (\Lambda_{F}+2)^{-2n-2}e^{-\vartheta\frac{2\Lambda_F^2}{r_0}}\frac{\sum_i r_{z_i}^{-1} m(B^{+}_{r_{z_i}}(z_i))}{\left(\sum_{i}m(B^{+}_{r_{z_i}}(z_i))\right)^{\frac{n-1}{n}}}\\
&\geq& \Xi^{\frac{1}{n}-1}2^{-3}5^{1-n} (1+\Lambda_F^2)^{1-n} (\Lambda_{F}+2)^{-2n-2}e^{-\vartheta\frac{2\Lambda_F^2}{r_0}}\inf_{i} \frac{ r_{z_i}^{-1} m(B^{+}_{r_{z_i}}(z_i))}{\left(m(B^{+}_{r_{z_i}}(z_i))\right)^{\frac{n-1}{n}}}\\
&=& \Xi^{\frac{1}{n}-1}2^{-3}5^{1-n} (1+\Lambda_F^2)^{1-n} (\Lambda_{F}+2)^{-2n-2}e^{-\vartheta\frac{2\Lambda_F^2}{r_0}} \inf_i \frac{m(B^{+}_{r_{z_i}}(z_i))^{\frac{1}{n}}}{r_{z_i}}.
\eeqn
On the other hand, since $B^{+}_{r_{0}r_{1}}(x)\subset B^{+}_{(\Lambda_{F}+1)r_{0}r_{1}}(z_i)$, by Theorem \ref{doubling} again, we have
$$
m(B^{+}_{r_{0}r_1}(x))
\leq m(B^{+}_{(\Lambda_{F}+1)r_{0}r_1}(z_i))
\leq \frac{1}{2}m(B^{+}_{\frac{r_1}{5\Lambda_{F}(\Lambda_{F}+2)}}(z_i))
\leq \frac{ \Xi \ r_{1}^{n} e^{\frac{\vartheta}{r_0}}}{2\cdot 5^{n}\Lambda_{F}^{n}(\Lambda_{F}+2)^n r_{z_i}^n} m(B^{+}_{r_{z_i}}(z_i)).
$$
Thus, by choosing $\varepsilon =\min\{\varepsilon_1, \Xi^{-\frac{1}{p}} e^{-\frac{\vartheta}{pr_0}}\varepsilon_2, \varepsilon_3\}$,  we can obtain the following inequality if $\overline{\mathcal{K}}(p, 1/r_{0}, \vartheta)<\varepsilon$
\be
\frac{\nu(\partial \Omega)}{m(\Omega)^{\frac{n-1}{n}}}
\geq \left( 10^{n+1}(1+\Lambda_{F}^2)^{n-1}(\Lambda_{F}+2)^{2n+1}\Xi\right)^{-1}e^{-\frac{\vartheta}{r_0}(2\Lambda_{F}^{2}+\frac{1}{n})}r_{1}^{-1} m(B^{+}_{r_{0}r_{1}}(x))^{\frac{1}{n}}. \label{main-3}
\ee

Now, for any  $r\leq 1$, set $r_1=\frac{r}{r_0}\leq \frac{1}{r_0}$, then the estimate (\ref{Dieste}) is obtained from (\ref{main-3}). This completes the proof.
\end{proof}

\vskip 2mm

Combining Theorem \ref{constant} and Theorem \ref{alpha-relation}, we have

\begin{cor}{\label{Sobolev-ineq}}
Let $(M, F, m)$ be an n-dimensional forward complete Finsler metric measure manifold with $\Lambda_{F}<\infty$. Assume that $\mathbf{S}(\nabla r)\geq-\vartheta$ along any minimal geodesic segment for some $\vartheta \geq 0$. For $p>\frac{n}{2}$ and $\Xi>1$, there exist constants $0<r_{0}=r_{0}(n, \Lambda_{F})<1$ and  $\varepsilon =\varepsilon(n, p, \vartheta, \Lambda_{F}, r_{0}, \Xi)>0$ such that if $\overline{\mathcal{K}}(p, 1/r_{0}, \vartheta)<\varepsilon$, then for any $x\in M$, $r\leq 1$ with $\partial B_1^+(x)\neq \emptyset$, the following inequality holds
\be
\left(\int_{B^{+}_{r}(x)}|f|^{\frac{n}{n-1}}dm\right)^{\frac{n-1}{n}}\leq C(n, \vartheta,\Lambda_{F}, r_{0}, \Xi)  m(B_{r}^{+}(x))^{-\frac{1}{n}} r\int_{B_{r}^{+}(x)} F^{*}(df)dm, \forall f\in C_0^{\infty}(B_{r}^{+}(x)).\label{s-1}
\ee
Further, when $n >2$, applying (\ref{s-1}) to $f^{\frac{2(n-1)}{n-2}}$ and using the H\"{o}lder inequality give
\beq
\left(\int_{B^{+}_{r}(x)}|f|^{\frac{2n}{n-2}}dm\right)^{\frac{n-2}{n}}\leq \tilde{C}(n, \vartheta,\Lambda_{F}, r_{0}, \Xi) m(B_{r}^{+}(x))^{-\frac{2}{n}} r^2 \int_{B_{r}^{+}(x)} F^{*2}(df)dm, \forall f\in C_{0}^{\infty}(B_{r}^{+}(x)).\label{s-2}
\eeq
\end{cor}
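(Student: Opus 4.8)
The plan is to obtain \eqref{s-1} directly by combining the Dirichlet isoperimetric constant estimate of Theorem \ref{constant} with the identity ${\rm ID}_{n}={\rm SD}_{n}$ established in Theorem \ref{alpha-relation}, and then to derive \eqref{s-2} from \eqref{s-1} by a single substitution-and-H\"{o}lder step.

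First I would apply Theorem \ref{alpha-relation} with the ambient space taken to be the forward ball $B_{r}^{+}(x)$ (which, together with its boundary, falls under the hypotheses of the definitions of ${\rm ID}_{\alpha}$ and ${\rm SD}_{\alpha}$), obtaining ${\rm ID}_{n}(B_{r}^{+}(x))={\rm SD}_{n}(B_{r}^{+}(x))$ and hence, after dividing by $m(B_{r}^{+}(x))^{1/n}$, the equality ${\rm SD}_{n}^{*}(B_{r}^{+}(x))={\rm ID}_{n}^{*}(B_{r}^{+}(x))$. Feeding in the lower bound \eqref{Dieste} then gives ${\rm SD}_{n}(B_{r}^{+}(x))\geq C(n,\vartheta,\Lambda_{F},r_{0},\Xi)^{-1}\,r^{-1}\,m(B_{r}^{+}(x))^{1/n}$. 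Finally I would unwind the definition \eqref{DiSo} of ${\rm SD}_{n}$: for every $f\in C_{0}^{\infty}(B_{r}^{+}(x))$ one has $\int_{B_{r}^{+}(x)}F^{*}(df)\,dm\geq\min\{\int F^{*}(df)\,dm,\ \int\overleftarrow{F}^{*}(df)\,dm\}\geq {\rm SD}_{n}(B_{r}^{+}(x))\,\big(\int_{B_{r}^{+}(x)}|f|^{n/(n-1)}\,dm\big)^{(n-1)/n}$, and rearranging produces exactly \eqref{s-1}, with the very same constant $C(n,\vartheta,\Lambda_{F},r_{0},\Xi)$.

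For \eqref{s-2}, assume $n>2$. I would first record that \eqref{s-1} extends from $C_{0}^{\infty}(B_{r}^{+}(x))$ to compactly supported $C^{1}$ functions by a routine mollification/density argument, since both sides are continuous under $L^{n/(n-1)}$-convergence of $f$ together with $L^{1}$-convergence of $F^{*}(df)$. Then, for $f\in C_{0}^{\infty}(B_{r}^{+}(x))$, after reducing to $f\geq0$ (replacing $f$ by $|f|$, which is again compactly supported and $C^{1}$ since $\tfrac{2(n-1)}{n-2}>1$, at the cost only of a harmless factor $\Lambda_{F}^{2}$ absorbed into the $\Lambda_{F}$-dependence, via $\overleftarrow{F}^{*}(df)\leq\Lambda_{F}F^{*}(df)$), I would apply the extended \eqref{s-1} to $f^{q}$ with $q=\tfrac{2(n-1)}{n-2}$; since $F^{*}$ is positively $1$-homogeneous and $qf^{q-1}\geq0$ one has $F^{*}(d(f^{q}))=qf^{q-1}F^{*}(df)$, and because $\tfrac{qn}{n-1}=\tfrac{2n}{n-2}$ and $q-1=\tfrac{n}{n-2}$ this turns \eqref{s-1} into
\[
\Big(\int_{B_{r}^{+}(x)}f^{\frac{2n}{n-2}}\,dm\Big)^{\frac{n-1}{n}}\leq qC\,m(B_{r}^{+}(x))^{-\frac{1}{n}}\,r\int_{B_{r}^{+}(x)}f^{\frac{n}{n-2}}F^{*}(df)\,dm.
\]
Bounding the last integral by the H\"{o}lder inequality with exponents $2,2$,
\[
\int_{B_{r}^{+}(x)}f^{\frac{n}{n-2}}F^{*}(df)\,dm\leq\Big(\int_{B_{r}^{+}(x)}f^{\frac{2n}{n-2}}\,dm\Big)^{\frac{1}{2}}\Big(\int_{B_{r}^{+}(x)}F^{*2}(df)\,dm\Big)^{\frac{1}{2}},
\]
dividing through by $\big(\int_{B_{r}^{+}(x)}f^{2n/(n-2)}\,dm\big)^{1/2}$ (finite, and positive unless $f\equiv0$), using $\tfrac{n-1}{n}-\tfrac{1}{2}=\tfrac{n-2}{2n}$ and squaring, I would arrive at \eqref{s-2} with $\tilde{C}=q^{2}C^{2}=\big(\tfrac{2(n-1)}{n-2}\big)^{2}C(n,\vartheta,\Lambda_{F},r_{0},\Xi)^{2}$ (up to the innocuous $\Lambda_{F}^{2}$ from the reduction to $f\geq0$).

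The hard part is not really hard: essentially all the genuine work is already contained in Theorem \ref{constant}. The only point that needs care is the regularity/density step, namely verifying that \eqref{s-1}, stated for smooth compactly supported test functions, remains valid for the merely $C^{1}$ competitor $f^{q}$; once this is granted, everything that remains is bookkeeping of exponents and constants.
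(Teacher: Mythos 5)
Your proposal is correct and follows exactly the route the paper intends: the paper derives \eqref{s-1} by combining the isoperimetric estimate of Theorem \ref{constant} with the identity ${\rm ID}_{n}={\rm SD}_{n}$ of Theorem \ref{alpha-relation}, and obtains \eqref{s-2} from \eqref{s-1} applied to $f^{2(n-1)/(n-2)}$ followed by the Cauchy--Schwarz inequality, just as you do. Your exponent bookkeeping and the density/positivity reductions are all in order.
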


\section{Applications}{\label{Application}}
In this section we will give lower bound estimate for the first (nonzero) Dirichlet eigenvalue and a local gradient estimate for the harmonic functions as the applications of the Dirichlet isoperimetric constant estimate.

\begin{thm}\label{eiges}
Let $(M, F, m)$ be an n-dimensional forward complete Finsler metric measure manifold with $\Lambda_{F}<\infty$. Assume that $\mathbf{S}(\nabla r)\geq-\vartheta$ along any minimal geodesic segment for some $\vartheta \geq 0$. For $p>\frac{n}{2}$, there exist constants $0<r_{0}=r_{0}(n, \Lambda_{F})<1$ and  $\varepsilon =\varepsilon(n, p, \vartheta, \Lambda_{F}, r_{0})>0$ such that if $\overline{\mathcal{K}}(p, 1/r_{0}, \vartheta)<\varepsilon$, then for any $x\in M$, $r\leq 1$ with $\partial B_1^+(x)\neq \emptyset$, the first (nonzero) Dirichlet eigenvalue for the Finsler Laplacian has lower bound
$$
\lambda_1(B^{+}_{r}(x))\geq \tilde{C}(n, \vartheta, \Lambda_{F}, r_{0})^{-1}r^{-2}.
$$
\end{thm}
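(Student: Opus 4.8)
The plan is to deduce the eigenvalue bound directly from the local Sobolev inequality of Corollary \ref{Sobolev-ineq}. Fix once and for all the auxiliary parameter there, say $\Xi=2$; this determines $r_0=r_0(n,\Lambda_F)$, $\varepsilon=\varepsilon(n,p,\vartheta,\Lambda_F,r_0)$ and the constant $\tilde C=\tilde C(n,\vartheta,\Lambda_F,r_0)$ appearing in (\ref{s-1}) and (\ref{s-2}). Recall that the first nonzero Dirichlet eigenvalue of the Finsler Laplacian on $B_r^+(x)$ has the variational characterization
\[
\lambda_1(B_r^+(x))=\inf\left\{\frac{\int_{B_r^+(x)}F^{*2}(df)\,dm}{\int_{B_r^+(x)}f^2\,dm}\ :\ f\in C_0^\infty(B_r^+(x)),\ f\not\equiv 0\right\},
\]
so it suffices to bound the Rayleigh quotient on the right from below by a multiple of $r^{-2}$.

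For $n>2$, fix $f\in C_0^\infty(B_r^+(x))$ and apply H\"older's inequality with the conjugate exponents $\tfrac{n}{n-2}$ and $\tfrac{n}{2}$:
\[
\int_{B_r^+(x)}f^2\,dm\ \leq\ \left(\int_{B_r^+(x)}|f|^{\frac{2n}{n-2}}\,dm\right)^{\frac{n-2}{n}}m\big(B_r^+(x)\big)^{\frac{2}{n}}.
\]
Inserting (\ref{s-2}) into the first factor, the powers $m(B_r^+(x))^{\pm 2/n}$ cancel and we obtain
\[
\int_{B_r^+(x)}f^2\,dm\ \leq\ \tilde C\, r^2\int_{B_r^+(x)}F^{*2}(df)\,dm .
\]
Dividing and taking the infimum over all admissible $f$ gives $\lambda_1(B_r^+(x))\geq\tilde C^{-1}r^{-2}$.

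For $n=2$, inequality (\ref{s-2}) is not available, so I would instead apply (\ref{s-1}) to $g=f^2\in C_0^\infty(B_r^+(x))$. Since $F^*$ is positively $1$-homogeneous and $d(f^2)=2f\,df$, and since $F^*(-\xi)\le\Lambda_F F^*(\xi)$, one has $\int F^*(d(f^2))\,dm\le 2\Lambda_F\int|f|\,F^*(df)\,dm$; combining this with the Cauchy--Schwarz inequality and the elementary bound $\int f^2\,dm\le\big(\int f^4\,dm\big)^{1/2}m(B_r^+(x))^{1/2}$, one absorbs one power of $\|f^2\|_{L^2}$ and the volume factors to arrive at $\int_{B_r^+(x)}f^2\,dm\le C'\, r^2\int_{B_r^+(x)}F^{*2}(df)\,dm$, where $C'$ depends only on $n,\vartheta,\Lambda_F,r_0$. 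This again yields the claimed lower bound $\lambda_1(B_r^+(x))\ge(C')^{-1}r^{-2}$.

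Apart from the bookkeeping of constants, the only point that requires genuine care is confirming that the first Dirichlet eigenvalue of the \emph{nonlinear} Finsler Laplacian is given by the Rayleigh quotient above; this is the standard variational setup, legitimate because $\nabla=\mathcal{L}^{-1}\circ d$ is positively $1$-homogeneous and $F^*$ is uniformly convex and smooth by (\ref{unisc}), so that the energy functional $f\mapsto\int F^{*2}(df)\,dm$ is well behaved on $W_0^{1,2}(B_r^+(x))$ and its normalized minimizers solve $\Delta u=-\lambda_1 u$ weakly. I expect this to be the main conceptual obstacle, the remaining computations being routine.
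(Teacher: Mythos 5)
Your argument is essentially the paper's: the paper applies H\"older's inequality and (\ref{s-2}) (with $\Xi=2$) directly to the Dirichlet eigenfunction to obtain $\int_{B_r^+(x)}f^2\,dm\le \tilde C\,r^2\int_{B_r^+(x)}F^{*2}(df)\,dm$ and concludes, which is exactly your Rayleigh-quotient computation specialized to the minimizer. Your separate treatment of $n=2$ via (\ref{s-1}) applied to $f^2$ is a worthwhile supplement, since (\ref{s-2}) --- and hence the paper's proof as written --- is only stated for $n>2$.
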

\begin{proof} Let $\lambda_1$ be the first (nonzero) Dirichlet eigenvalue for Finsler Laplacian $\Delta$ and $f$ be its associating eigenfunction, that is,
$$
\left\{ {\begin{array}{*{20}{c}}
\begin{array}{l}
\Delta f=-\lambda_1 f, \\
f =0,
\end{array}&\begin{array}{l}
\text{in} \ B^{+}_{r}(x),\\
\text{on} \ \partial B^{+}_{r}(x).
\end{array}
\end{array}} \right.
$$
Then, by (\ref{s-2}) with $\Xi =2$, we have
$$
\int_{B^{+}_{r}(x)}f^{2} dm\leq \left(\int_{B^{+}_{r}(x)}f^{\frac{2n}{n-2}} dm\right)^{\frac{n-2}{n}}m(B^{+}_{r}(x))^{\frac{2}{n}}\leq \tilde{C}(n, \vartheta, \Lambda_{F}, r_{0}) r^2 \int_{B_{r}^{+}(x)} F^{*2}(df)dm.
$$
Thus $\lambda_{1}(B^{+}_{r}(x))\geq \tilde{C}(n, \vartheta, \Lambda_{F}, r_{0})^{-1}r^{-2}$.
\end{proof}

In the following, we always assume that $n>2$. For simplicity, let $C_{\rm SD}(\Omega)$ be the normalized local Sobolev constant of $\Omega\subset M$, namely,
\begin{equation}{\label{CSD}}
\left(\int_{\Omega}|f|^{\frac{2n}{n-2}}dm\right)^{\frac{n-2}{n}}\leq C_{\rm SD}(\Omega)\int_{\Omega} F^{*2}(df)dm, \ \ \forall f\in C_{0}^{\infty}(\Omega).
\end{equation}
Besides,  we will denote $B_{R}:=B_{R}^{+}(x_0)$ for any given $x_{0}\in M$.

 In order to derive the gradient estimate for the harmonic function,  let us first give a mean value inequality for nonnegative subsolutions of a class of elliptic equations. It is worth to mention  that the present authors have obtained a mean value inequality  for non-negative subsolutions of the elliptic equations with $f\in L^{\infty}(B_{R}^{+}(x_{0}))$  under the condition that the weighted Ricci curvature ${\rm Ric}_\infty$ has a lower bound \cite{chengF}. The following results for $f\in L^{q}(B_{R}^{+}(x_{0}))$ can be regarded as a supplement of the case with $f\in L^{\infty}(B_{R}^{+}(x_{0}))$ discussed in  \cite{chengF}.

\begin{lem}\label{mean-ineq}
Let $(M, F, m)$ be an $n$-dimensional forward complete Finsler measure space with $\Lambda_{F}<\infty$ and Sobolev inequality (\ref{CSD}) hold. Suppose that $u$ is a non-negative function on $B_{R}^{+}(x_0)$ for any $x_{0}\in M$ satisfying
$$
\Delta u \geq -fu
$$
in the weak sense, where $f\in L^{q}(B_{R}^{+}(x_0)) \ (\frac{n}{2}<q<\infty)$ is non-negative. Then, we have the following inequalities.

{\rm (1)} For any $k\geq 2$, there are constants $C_{i}(n, q)>0 \ (i=1,2)$ depending only on $n$ and $q$, such that
\begin{equation}
\sup _{B_{\delta R}^{+}(x_0)} u^k \leq  C_{2}(n, q)\left(\frac{4(4+\Lambda_F)}{(1-\delta)^2}\frac{C_{\rm SD}(B_{R}^{+}(x_0))}{ R^2}+ 8 C_{1}(n, q)C_{\rm SD}^{\beta}(B_{R}^{+}(x_0))\left(\frac{k}{2}\mathcal{A}\right)^{\beta}\right)^{\frac{n}{2}} \int_{B_{R}^{+}(x_0)} u^{k} dm  \label{meanineq-1}
\end{equation}
for any $\delta \in(0,1)$, where $\mathcal{A}:=\left(\int_{B_{R}^{+}(x_0)} f^q dm\right)^{\frac{1}{q}}$ and $\beta:=\frac{2q}{2q-n}$.

{\rm (2)} For any $0<k<2$ and $\delta \in(0,1)$, there are constants $C_{1}(n, q)$ and $C_{3}(n, q, k)>0$ depending on $n$, $q$ and $k$, such that
\be
\sup\limits_{B_{\delta R}^{+}(x_0)} u^k \leq C_{3}(n, q, k)\left(\frac{4(4+\Lambda_F)}{(1-\delta)^2}\frac{C_{\rm SD}(B_{R}^{+}(x_0))}{ R^2}+ 8C_{1}(n, q)C_{\rm SD}^{\beta}(B_{R}^{+}(x_0))\mathcal{A}^{\beta}\right)^{\frac{n}{2}}\int_{B_{R}^{+}(x_0)} u^k d m.\label{meanineq-2}
\ee

\end{lem}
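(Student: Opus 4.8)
The plan is to run a Moser iteration adapted to the nonlinear Finsler Laplacian, treating $f$ as a potential via H\"older's inequality. First I would use the weak form of $\Delta u\ge -fu$: for every non-negative $\phi\in W^{1,2}_0(B_R^+(x_0))$ one has $-\int d\phi(\nabla u)\,dm\ge -\int fu\phi\,dm$, i.e. $\int d\phi(\nabla u)\,dm\le \int fu\phi\,dm$. Testing with $\phi=\eta^2 u_\epsilon^{2\gamma-1}$, where $u_\epsilon:=u+\epsilon$, $\gamma\ge1$, and $\eta$ is a Lipschitz cutoff supported in $B_R^+(x_0)$, and using $du(\nabla u)=F^{*2}(du)$, the uniform convexity and smoothness bounds (\ref{unisc}) to compare $F^*(du)$ with $g_{\nabla u}$, and Young's inequality to absorb the cross term carrying $d\eta(\nabla u)$ (this is where the factors of $\Lambda_F$ enter), I expect a Caccioppoli-type inequality of the shape
\[
\int \eta^2\,F^{*2}\!\big(d(u_\epsilon^{\gamma})\big)\,dm\ \le\ C\,\gamma^2\Big(\int F^{*2}(d\eta)\,u_\epsilon^{2\gamma}\,dm+\int f\,\eta^2 u_\epsilon^{2\gamma}\,dm\Big),
\]
with $C=C(n,\Lambda_F)$, and I would let $\epsilon\to0$ at the end.

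Next I would insert $w:=\eta\,u_\epsilon^{\gamma}$ into the normalized Sobolev inequality (\ref{CSD}), bounding $\big(\int w^{2n/(n-2)}\,dm\big)^{(n-2)/n}$ by $C_{\rm SD}(B_R^+(x_0))\int F^{*2}(dw)\,dm$, hence by a constant times $C_{\rm SD}\,\gamma^2\big(\int F^{*2}(d\eta)u_\epsilon^{2\gamma}+\int f\eta^2 u_\epsilon^{2\gamma}\big)$. The potential term is controlled by H\"older with exponents $q$ and $q'=q/(q-1)$: $\int f\eta^2 u_\epsilon^{2\gamma}\le \mathcal{A}\,\big(\int w^{2q'}\,dm\big)^{1/q'}$, and since $q>\tfrac n2$ gives $2q'<\tfrac{2n}{n-2}$, the factor $\big(\int w^{2q'}\big)^{1/q'}$ interpolates between $\|w\|_{L^2}^2$ and $\|w\|_{L^{2n/(n-2)}}^2$; the high-integrability part is absorbed into the left side at the cost of a factor $C_{\rm SD}^{\beta}$ with $\beta=\tfrac{2q}{2q-n}$, leaving only $\mathcal{A}^\beta\|w\|_{L^2}^2$. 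Choosing cutoffs $\eta_i$ between nested balls $B_{\rho_{i+1}}^+\subset B_{\rho_i}^+$ with $\rho_i\downarrow\delta R$ and $|d\eta_i|\le C\,2^{i}((1-\delta)R)^{-1}$, and setting $\chi=\tfrac n{n-2}$, $\gamma_i=(k/2)\chi^{i}$, the combination yields a reverse-H\"older chain
\[
\Big(\int_{B_{\rho_{i+1}}^+}u^{k\chi^{i+1}}\Big)^{\!1/(k\chi^{i+1})}\le \big(C\,b_i\big)^{1/(k\chi^{i})}\Big(\int_{B_{\rho_i}^+}u^{k\chi^{i}}\Big)^{\!1/(k\chi^{i})},
\]
with $\sum_i \chi^{-i}\log b_i<\infty$; iterating from $i=0$ and letting $i\to\infty$ produces (\ref{meanineq-1}), with the displayed dependence on $k\ge2$, $C_{\rm SD}$, $\mathcal{A}$ and $\delta$.

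For part (2), with $0<k<2$, the test-function exponent $2\gamma-1$ would be negative, so the iteration cannot be started at $u^{k}$ directly; instead I would use the standard absorption/iteration device. From the case $k=2$, on a pair of balls $B_{s}^+\subset B_{t}^+\subset B_R^+$ one gets $\sup_{B_s^+}u\le \Theta(s,t)\,\|u\|_{L^2(B_t^+)}$ with $\Theta$ of the stated form and an extra power of $(t-s)^{-1}$; writing $\|u\|_{L^2(B_t^+)}^2\le (\sup_{B_t^+}u)^{2-k}\int_{B_t^+}u^{k}\,dm$ and applying Young's inequality $ab\le \theta\,a^{2/(2-k)}+C_\theta\,b^{2/k}$ splits off $\sup_{B_t^+}u$ with an arbitrarily small coefficient. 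Running this over a geometric sequence of radii between $\delta R$ and $R$ and summing the resulting geometric series absorbs the iterated $\sup$ terms and upgrades the $L^2$-mean bound to the $L^k$-mean bound (\ref{meanineq-2}), with a constant $C_3(n,q,k)$ that degenerates as $k\to0$.

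The main obstacle I anticipate is the constant bookkeeping through the iteration --- in particular obtaining the exact exponent $\beta=\tfrac{2q}{2q-n}$ on $C_{\rm SD}$ and the precise growth in $k$ in (\ref{meanineq-1})--(\ref{meanineq-2}) --- together with justifying the test-function manipulations for $u$ only of class $W^{1,2}_{\rm loc}\cap C^0$ and with $\nabla u$ possibly non-smooth on $\{du=0\}$: the regularization $u\mapsto u+\epsilon$ and the uniform ellipticity (\ref{unisc}) of $(g^{*ij})$ handle the latter, while the former is routine but must be tracked carefully.
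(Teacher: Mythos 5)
Your proposal is correct and follows essentially the same route as the paper: test the weak inequality with $\varphi^2 u^{2a-1}$ to get a Caccioppoli estimate, feed $\varphi u^a$ into the Sobolev inequality, absorb the potential term via H\"older and Young's inequality (producing the factor $C_{\rm SD}^{\beta}\mathcal{A}^{\beta}$ with $\beta=\tfrac{2q}{2q-n}$), run Moser iteration over nested balls, and handle $0<k<2$ by the standard $\sup$-absorption trick. The only cosmetic difference is that the paper iterates with the exponent $t=2-\tfrac{1}{\mu}$ obtained from an extra H\"older interpolation (following Dai--Wei--Zhang) rather than the raw Sobolev exponent $\chi=\tfrac{n}{n-2}$; both yield the stated power $\tfrac{n}{2}$.
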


\begin{proof}  Since $u$ is a nonnegative function satisfying $\Delta u\geq-fu$ in the weak sense on $B_R$, we have
\begin{equation}{\label{mean-2}}
\int_{B_{R}} d \phi(\nabla u) d m \leq \int_{B_{R}}\phi fu dm
\end{equation}
for any nonnegative function $\phi \in \mathcal{C}_0^{\infty}\left(B_R\right)$.  For any $0 < \delta<\delta^{\prime} \leq 1$ and $a\geq1$, let $\phi=u^{2a-1}\varphi^2$,  where $\varphi$ is a cut-off function defined by
$$
\varphi(x)= \begin{cases}1 & \text { on } B_{\delta R}, \\ \frac{\delta^{\prime} R-d \left(x_0, x\right)}{\left(\delta^{\prime}-\delta\right) R} & \text { on } B_{\delta^{\prime} R} \backslash B_{\delta R}, \\ 0 & \text { on } M \backslash B_{\delta^{\prime} R}.\end{cases}
$$
Then $F^{*}(-d \varphi) \leq \frac{1}{\left(\delta^{\prime}-\delta\right) R}$ and hence $F^{*}(d \varphi) \leq \frac{\Lambda_F}{\left(\delta^{\prime}-\delta\right) R}$ a.e. on $B_{\delta^{\prime} R}$. Thus, by (\ref{mean-2}), we have
$$
(2a-1)\int_{B_R} \varphi^2 u^{2a-2}F^{*2}(d u) d m  +2 \int_{B_R} \varphi u^{2a-1} d \varphi(\nabla u ) d m \leq  \int_{B_R} \varphi^{2}u^{2a} f d m.
$$
Therefore,
$$
a^2\int_{B_R} \varphi^2 u^{2a-2}F^{*2}(d u) d m \leq -2a \int_{B_R} \varphi u^{2a-1} d \varphi(\nabla u ) d m + a \int_{B_R} \varphi^2 u^{2a} f d m,
$$
that is,
\beqn
\int_{B_R} \varphi^2 F^{*2}(d u^{a}) d m &\leq & -2 \int_{B_R} \varphi u^{a} d \varphi(\nabla u^{a} ) d m + a \int_{B_R} \varphi^2 u^{2a} f d m \\
&\leq & 2 \int_{B_R} \varphi u^{a} F^{*}(-d \varphi) F\left(\nabla u^{a}\right)dm + a\int_{B_R}\varphi^2 u^{2a} f d m \\
& \leq & \frac{1}{2} \int_{B_R} \varphi^2 F^{2}(\nabla u^{a}) dm + 2\int_{B_R}u^{2a} F^{*2}(-d \varphi) dm + a \int_{B_R} \varphi^2 u^{2a} f d m.\\
\eeqn
It follows from the above inequality that
\beq
\int_{B_R} \varphi^2 F^{*2}(d u^a) dm \leq  4 \int_{B_R} u^{2a} F^{* 2}(-d \varphi) d m + 2a \int_{B_R} \varphi^2 u^{2a} f d m.\label{meanv1}
\eeq
Obviously, Sobolev inequality (\ref{CSD}) and (\ref{meanv1}) imply
\beq
C_{\rm SD}^{-1}(B_R)\left(\int_{B_R} (u^a\varphi)^{2\mu}dm\right)^{\frac{1}{\mu}}&\leq & \int_{B_R}F^{*2}(d(\varphi u^a))dm \nonumber\\
&\leq& 2\int_{B_R}u^{2a}F^{*2}(d\varphi)dm +2\int_{B_R}\varphi^2 F^{*2}(du^{a})dm \nonumber\\
&\leq& (8+2\Lambda_F)\int_{B_R}u^{2a}F^{*2}(-d\varphi)dm + 4a \int_{B_R} \varphi^2 u^{2a} f d m, \label{meanv2}
\eeq
where $\mu=\frac{n}{n-2}$. For the second term in the right hand side of (\ref{meanv2}), by H\"{o}lder inequality, we have
$$
a \int_{B_R} \varphi^2 u^{2a} f d m \leq a \mathcal{A} \left(\int_{B_R}\left(\varphi^2 u^{2a}\right)^{\frac{q}{q-1}}dm\right)^{\frac{q-1}{q}}
\leq a\mathcal{A} \left(\int_{B_R} \varphi^2 u^{2a}dm\right)^{\frac{\mu(q-1)-q}{q(\mu-1)}} \left(\int_{B_R}\left(\varphi^2 u^{2a}\right)^{\mu} dm\right)^{\frac{1}{q(\mu-1)}},
$$
where $\mathcal{A}:=\left(\int_{B_R} f^q dm \right)^{\frac{1}{q}}$. Since $w^{\varepsilon}\leq \delta^{\frac{\varepsilon-1}{\varepsilon}}w+\delta\varepsilon^{\frac{1}{1-\varepsilon}}(\frac{1}{\varepsilon}-1)$ for any $w\geq 0$, $0<\varepsilon<1$ and $\delta>0$, we choose $\varepsilon=\frac{\mu(q-1)-q}{q(\mu-1)}$ and $w=\left(a\mathcal{A}\right)^{\frac{q(\mu-1)}{\mu(q-1)-q}} \int_{B_R}\varphi^2 u^{2a}dm  \cdot \left(\int_{B_R}\left(\varphi^2 u^{2a}\right)^{\mu}dm\right)^{-\frac{1}{\mu}}$, then the following inequality holds,
\beqn
& &a\mathcal{A}\left(\int_{B_R}\varphi^2 u^{2a}dm\right)^{\frac{\mu(q-1)-q}{q(\mu-1)}} \left(\int_{B_R}\left(\varphi^2 u^{2a}\right)^{\mu}dm\right)^{\frac{\mu-q(\mu-1)}{q\mu(\mu-1)}}\\
& &\leq \delta^{\frac{\varepsilon-1}{\varepsilon}} \left(a\mathcal{A}\right)^{\frac{q(\mu-1)}{\mu(q-1)-q}} \int_{B_R}\varphi^2 u^{2a}dm  \cdot \left(\int_{B_R}\left(\varphi^2 u^{2a}\right)^{\mu}dm\right)^{-\frac{1}{\mu}} + \delta\varepsilon^{\frac{1}{1-\varepsilon}}\left(\frac{1}{\varepsilon}-1\right).
\eeqn
Multiplying the both sides of above inequality by $\left(\int_{B_R}\left(\varphi^2 u^{2a}\right)^{\mu}\right)^{\frac{1}{\mu}}$ yields
\beqn
& &a\mathcal{A}\left(\int_{B_R}\varphi^2 u^{2a}dm\right)^{\frac{\mu(q-1)-q}{q(\mu-1)}} \left(\int_{B_R}\left(\varphi^2 u^{2a}\right)^{\mu}dm\right)^{\frac{1}{q(\mu-1)}}\\
& &\leq \delta^{\frac{\varepsilon-1}{\varepsilon}} \left(a\mathcal{A}\right)^{\frac{q(\mu-1)}{\mu(q-1)-q}} \int_{B_R}\varphi^2 u^{2a}dm + \delta\varepsilon^{\frac{1}{1-\varepsilon}}\left(\frac{1}{\varepsilon}-1\right)\left(\int_{B_R} \left(\varphi^2 u^{2a}\right)^{\mu}dm\right)^{\frac{1}{\mu}}.
\eeqn
Further, choosing $\delta$ such that $\delta \varepsilon^{\frac{1}{1-\varepsilon}}\left(\frac{1}{\varepsilon}-1\right)=\frac{C^{-1}_{\rm SD}(B_R)}{8}$, namely, $\delta=\frac{C^{-1}_{\rm SD}(B_R)}{8} \varepsilon^{\frac{1}{\varepsilon-1}}\frac{\varepsilon}{1-\varepsilon}$, then we obtain
$$
a \int_{B_R} \varphi^2 u^{2a} f d m
\leq C_{1}(n, q)C_{SD}^{\frac{1-\varepsilon}{\varepsilon}}(B_R) \left(a\mathcal{A}\right)^{\frac{q(\mu-1)}{\mu(q-1)-q}} \int_{B_R}\varphi^2 u^{2a}dm + \frac{C_{\rm SD}^{-1}(B_R)}{8} \left(\int_{B_R}\left(\varphi^2u^{2a}\right)^{\mu}dm\right)^{\frac{1}{\mu}},
$$
where $C_{1}(n, q):=8^{\frac{1-\varepsilon}{\varepsilon}}\varepsilon^{\frac{1}{\varepsilon}} \left(\frac{\varepsilon}{1-\varepsilon}\right)^{\frac{\varepsilon-1}{\varepsilon}}$. Thus, (\ref{meanv2}) can be rewritten as
$$
\left(\int_{B_R} (u^{a}\varphi)^{2\mu}dm\right)^{\frac{1}{\mu}}
\leq \left(4(4+\Lambda_F)\frac{C_{\rm SD}(B_R)}{(\delta'-\delta)^2 R^2} +8C_{1}(n, q)C_{\rm SD}^{\beta}(B_R) \left(a\mathcal{A}\right)^{\beta} \right)\int_{B_{\delta' R}} u^{2a}dm,
$$
where $\beta:=\frac{q(\mu-1)}{\mu(q-1)-q}=\frac{2q}{2q-n}$. By using the above inequality and H\"{o}lder's inequality, we have
\beq
\int_{B_{\delta R}} u^{2a\left(2-\frac{1}{\mu}\right)}d m &\leq & \int_{B_R}(u^a \varphi)^{2\left(2-\frac{1}{\mu}\right)} d m \nonumber\\
&\leq & \left(\int_{B_R}(u^a \varphi)^{2\mu} d m\right)^{\frac{1}{\mu}} \cdot\left(\int_{B_R}(u^a \varphi)^2 d m\right)^{\frac{\mu-1}{\mu}} \nonumber \\
&\leq & \left(4(4+\Lambda_F)\frac{C_{\rm SD}(B_R)}{(\delta'-\delta)^{2} R^2} +8C_{1}(n, q)C_{\rm SD}^{\beta}(B_R) \left(a\mathcal{A}\right)^{\beta} \right)\left(\int_{B_{\delta'R}} u^{2a}dm\right)^{2-\frac{1}{\mu}}.\label{moser}
\eeq

Let $t:=2-\frac{1}{\mu}$ and choose $k\geq2$ and $b\geq 1$ such that $a =\frac{k}{2}b\geq 1$. Then (\ref{moser}) can be rewritten as
\begin{equation}\label{mean-3}
\int_{B_{\delta R}} u^{kb t } d m \leq \left(\frac{\mathcal{B}}{(\delta'-\delta)^2} + \mathcal{G}b^{\beta} \right)\left(\int_{B_{\delta^{\prime} R}} u^{kb } d m\right)^t,
\end{equation}
where $\mathcal{B}:=4(4+\Lambda_F)\frac{C_{\rm SD}(B_R)}{ R^2}$ and $\mathcal{G}:=8C_{1}(n, q)C_{\rm SD}^{\beta}(B_R)\left(\frac{k}{2}\mathcal{A}\right)^{\beta}$.

For any $0<\delta <1$, let $\delta_0=1$ and $\delta_{i+1}=\delta_i-\frac{1-\delta}{2^{i+1}}$ on $B_{\delta_i R}$, $i=0,1, \cdots$. Applying (\ref{mean-3}) for $\delta^{\prime}=\delta_i, \ \delta=\delta_{i+1}$ and $b=t^i$, we have
$$
\int_{B_{\delta_{i+1} R}} u^{k t^{i+1}} d m \leq \left(\frac{4^{i+1}\mathcal{B}}{(1-\delta)^2}+\mathcal{G}t^{\beta i} \right)\left(\int_{B_{\delta_i R}} u^{k t^i} d m\right)^t.
$$
By Moser's iteration, one obtains that
\beqn
\|u^k\|_{L^{t^{i+1}}(B_{\delta_{i+1} R})}= \left(\int_{B_{\delta_{i+1} R}} u^{k t^{i+1}} d m\right)^{\frac{1}{t^{i+1}}}
&\leq & \prod\limits_{j=1}^{i+1} \left(\frac{4^{j}\mathcal{B}}{(1-\delta)^2}+\mathcal{G}t^{\beta (j-1)} \right)^{t^{-j}} \int_{B_R} u^k d m\\
&\leq & \left(\frac{\mathcal{B}}{(1-\delta)^2}+\mathcal{G}\right)^{\sum\limits_{j=1}^{i+1} t^{-j}} \max\{4, t^{\beta}\}^{\sum\limits_{j=1}^{i+1} jt^{-j}}\int_{B_R} u^k d m.
\eeqn
Because $\sum_{j=1}^{\infty} t^{-j}=\frac{\mu}{\mu-1}$ and $\sum_{j=1}^{\infty} j t^{-j}$ converges, we have
\be
\|u^k\|_{L^{\infty} (B_{\delta R})} \leq  C_{2}(n, q)\left(\frac{4(4+\Lambda_F)}{(1-\delta)^2}\frac{C_{\rm SD}(B_R)}{ R^2}+ 8C_{1}(n, q)C_{\rm SD}^{\beta}(B_R)\left(\frac{k}{2}\mathcal{A}\right)^{\beta}\right)^{\frac{\mu}{\mu-1}} \int_{B_R} u^k d m,\label{meanp2}
\ee
which implies (\ref{meanineq-1}) with $k\geq 2$.

For the case when $0<k<2$, by using iteration again and similar to the argument of Theorem 1.1 in \cite{chengF}, we can obtain (\ref{meanineq-2}).  This completes the proof.
\end{proof}
\vskip 2mm

From the mean value inequalities given in Lemma \ref{mean-ineq} and Bochner-Weitzenb\"{o}ck formula (\ref{BWforinf}), we can derive the following gradient estimate.

\begin{thm}{\label{gradient}}
Let $(M, F, m)$ be an n-dimensional forward complete Finsler metric measure manifold equipped with a uniformly convex and uniformly smooth Finsler metric $F$, and Sobolev inequality (\ref{CSD}) hold. Let $u$ be a harmonic function on $B_{R}^{+}(x_0)$ for any $x_{0}\in M$. Then for $p>\frac{n}{2}$, there exists a constant $C=C(n, p, \kappa, \kappa^{*})>0$ such that
\begin{equation*}
\sup _{B_{\frac{1}{2}R}^{+}(x_0)} F^{2}(\nabla u) \leq \frac{C}{R^{2}}\left(\frac{C_{\rm SD}(B_{R}^{+}(x_{0}))}{ R^2}+ C_{\rm SD}^{\beta}(B_{R}^{+}(x_0))e^{\frac{\beta\vartheta}{p}R} m(B_{R}^{+}(x_0))^{\frac{\beta}{p}}R^{-2\beta}\overline{\mathcal{K}}^{\beta}(p, R, \vartheta)\right)^{\frac{n}{2}}\int_{B_{R}^{+}(x_0)}u^2 dm,
\end{equation*}
where $\beta:=\frac{2p}{2p-n}$.
\end{thm}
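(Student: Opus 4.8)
The plan is to run a De Giorgi--Nash--Moser iteration on $w:=F^{2}(\nabla u)\ge 0$, with the differential inequality for $w$ supplied by the Bochner--Weitzenb\"ock formula (\ref{BWforinf}) and the mean value bound supplied by Lemma \ref{mean-ineq} transplanted to the weighted Riemannian structure $g_{\nabla u}$. First I would record the differential inequality. Since $u$ is harmonic, $\Delta u=0$, so $d(\Delta u)(\nabla u)=0$; and $\operatorname{Ric}_{\infty}(\nabla u)\ge \underline{\operatorname{Ric}}_{\infty}F^{2}(\nabla u)\ge -\operatorname{Ric}^{0}_{\infty}\,w$, where $\operatorname{Ric}^{0}_{\infty}=(-\underline{\operatorname{Ric}}_{\infty})_{+}$ is the $K=0$ curvature quantity. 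Hence (\ref{BWforinf}), applied on $B_{R}:=B^{+}_{R}(x_{0})$ and using $\|\nabla^{2}u\|^{2}_{{\rm HS}(\nabla u)}\ge 0$, gives for every $0\le\phi\in H^{1}_{0}(B_{R})\cap L^{\infty}(B_{R})$
\[
\int_{B_{R}} d\phi\!\left(\nabla^{\nabla u}w\right)dm \;\le\; \int_{B_{R}}\phi\, f\, w\, dm,\qquad f:=2\operatorname{Ric}^{0}_{\infty}\ge 0,
\]
i.e. $w$ is a weak nonnegative subsolution of $\Delta^{\nabla u}w+fw\ge 0$. Here one uses that a harmonic function of the Finsler Laplacian is smooth on $M_{u}$ and lies in $H^{2}_{\mathrm{loc}}\cap C^{1}$ with $\Delta u\in H^{1}_{\mathrm{loc}}$, so that (\ref{BWforinf}) is legitimately applicable.

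Next I would convert the curvature integral into the scale-invariant quantity $\overline{\mathcal K}$. Since $\vartheta\ge 0$ we have $e^{-\vartheta r}\ge e^{-\vartheta R}$ on $[0,R]$, so by (\ref{InweiR2-1})--(\ref{InweiR2-2}) and the decomposition $dm=\sigma(x_{0},r,\theta)\,dr\,d\theta$,
\[
\mathcal A:=\left(\int_{B_{R}}f^{p}\,dm\right)^{1/p}\le 2\,e^{\vartheta R/p}\,m(B_{R})^{1/p}\,R^{-2}\,\overline{\mathcal K}(p,R,\vartheta),
\]
hence $\mathcal A^{\beta}\lesssim e^{\beta\vartheta R/p}m(B_{R})^{\beta/p}R^{-2\beta}\overline{\mathcal K}^{\beta}(p,R,\vartheta)$ with $\beta=\tfrac{2p}{2p-n}$. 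I would also record a Caccioppoli inequality: for $R/2\le\rho<\rho'\le R$, testing $\int d\phi(\nabla u)\,dm=0$ with $\phi=u\psi^{2}$ (where $\psi\equiv1$ on $B_{\rho}$, $\operatorname{supp}\psi\subset B_{\rho'}$, $F^{*}(\pm d\psi)\le\Lambda_{F}/(\rho'-\rho)$), using $du(\nabla u)=F^{2}(\nabla u)$ together with Cauchy--Schwarz and Young, yields
\[
\int_{B_{\rho}}F^{2}(\nabla u)\,dm\le \frac{C(\kappa,\kappa^{*})}{(\rho'-\rho)^{2}}\int_{B_{R}}u^{2}\,dm .
\]

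Finally I would apply the mean value inequality. Because $F$ is uniformly convex and uniformly smooth, (\ref{unisc}) shows $g_{\nabla u}$ is uniformly equivalent to $F$, so the Sobolev inequality (\ref{CSD}) yields the analogous inequality for the energy $\int|\nabla^{\nabla u}(\cdot)|^{2}_{g_{\nabla u}}\,dm$ with constant at most $\kappa\,C_{\mathrm{SD}}(B_{R})$. Since the proof of Lemma \ref{mean-ineq} uses only the weak subsolution property, this Sobolev inequality, and Young's inequality, it carries over verbatim to the \emph{linear} operator $\Delta^{\nabla u}$ on the weighted Riemannian manifold $(M,g_{\nabla u},m)$; applying its $0<k<2$ case with $k=1$ on the ball $B_{\rho}$, $\rho=\tfrac34R$, with dilation factor $\tfrac23$ (so the supremum ball is $B_{R/2}$), then combining with the Caccioppoli bound (take $\rho'=\tfrac78R$), the estimate for $\mathcal A^{\beta}$, and the monotonicity $C_{\mathrm{SD}}(B_{\rho})\le C_{\mathrm{SD}}(B_{R})$, gives
\[
\sup_{B^{+}_{R/2}(x_{0})}F^{2}(\nabla u)\le \frac{C}{R^{2}}\left(\frac{C_{\mathrm{SD}}(B_{R})}{R^{2}}+C_{\mathrm{SD}}^{\beta}(B_{R})\,e^{\beta\vartheta R/p}\,m(B_{R})^{\beta/p}\,R^{-2\beta}\,\overline{\mathcal K}^{\beta}(p,R,\vartheta)\right)^{\!n/2}\int_{B_{R}}u^{2}\,dm
\]
with $C=C(n,p,\kappa,\kappa^{*})$ (the structural constants $\kappa,\kappa^{*},\Lambda_{F}$ being absorbed). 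The main obstacle is this last step: one must check carefully that Lemma \ref{mean-ineq}, stated for the nonlinear Finsler Laplacian, genuinely transfers to the linear weighted Laplacian $\Delta^{\nabla u}$ governing $w$ --- which needs the uniform comparison between $g_{\nabla u}$ and $F$ and a (routine, but not entirely automatic) re-inspection of that proof --- together with the regularity needed to feed the harmonic $u$ into (\ref{BWforinf}); the curvature bookkeeping and the Caccioppoli step are standard.
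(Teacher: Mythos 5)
Your proposal is correct and follows essentially the same route as the paper: both deduce from the Bochner--Weitzenb\"ock formula that $F^{2}(\nabla u)$ is a weak nonnegative subsolution of $\Delta^{\nabla u}w\ge -2\operatorname{Ric}^{0}_{\infty}w$, transplant the Moser iteration of Lemma \ref{mean-ineq} to the linear weighted Laplacian via the uniform convexity/smoothness comparison (\ref{unisc}), bound $\mathcal A$ by $e^{\vartheta R/p}m(B_R)^{1/p}R^{-2}\overline{\mathcal K}(p,R,\vartheta)$, and close with the Caccioppoli inequality $\int_{B_{2R/3}}F^{2}(\nabla u)\,dm\le CR^{-2}\int_{B_R}u^{2}\,dm$. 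The transfer of Lemma \ref{mean-ineq} to $\Delta^{\nabla u}$, which you rightly flag as the delicate point, is exactly the step the paper also handles by "following the argument of Lemma \ref{mean-ineq}" together with uniform convexity and smoothness.
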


\begin{proof}
Let $u(x)$ be a harmonic function on $B_R:=B_{R}^{+}(x_0)$. Then $u\in W^{2,2}_{\rm loc}(B_R)\cap \mathcal{C}^{1, \alpha}(B_R)$ and $F^{2}(\nabla u)\in W^{1,2}_{\rm loc}(B_R)\cap \mathcal{C}^{\alpha}(B_R)$. It follows from the Bochner-Weitzenb\"{o}ck type formula (\ref{BWforinf}) and the definition of ${\rm Ric}_{\infty}^0$ that
\begin{equation}\label{geheat}
\int_{B_R} \phi \Delta^{\nabla u}F^2(\nabla u) dm \geq -2\int_{B_R} \phi {\rm Ric}_{\infty}^0 F^2(\nabla u) dm
\end{equation}
for each non-negative bounded function $\phi\in H_0^{1}(B_R) \bigcap L^{\infty}(B_R)$. By replacing $u$ by $F^2(\nabla u)$, we can find that (\ref{geheat}) is an analogue of (\ref{mean-2}) with $f=2{\rm Ric}_{\infty}^0$. Hence, following the argument of Lemma \ref{mean-ineq} and by uniform convexity and uniform smoothness of $F$, we can obtain the following inequality for $k=1$
\begin{equation}\label{F-meanineq-1}
\sup\limits_{B_{\frac{1}{2} R}} F^{2}(\nabla u)  \leq C(n, p, \kappa, \kappa^{*})\left(\frac{C_{\rm SD}(B_R)}{ R^2}+ C_{\rm SD}^{\beta}(B_R)e^{\frac{\beta\vartheta}{p}R} m(B_R)^{\frac{\beta}{p}}R^{-2\beta}\overline{\mathcal{K}}^{\beta}(p, R, \vartheta)\right)^{\frac{n}{2}}\int_{B_{\frac{2}{3}R}} F^2(\nabla u) d m,
\end{equation}
where $\beta:=\frac{2p}{2p-n}$ and $C(n, p, \kappa, \kappa^{*})$ is universal positive constant.

In the following, we continue to denote by $C$ some universal constant, which may be different line by line. Let $\varphi(x)\in C_0^{\infty}(B_R)$ be a cut-off function such that
$$
0\leq\varphi\leq1, \ \ \varphi\equiv1 \ \text {in} \ B_{\frac{2}{3}R}, \ \ \varphi\equiv0 \ \text {in} \ B_R\backslash B_{\frac{3}{4}R}, \ \  \text{and} \ F^*(-d \varphi) \leq \frac{C}{ R} \  \text{a.e. in} \ B_{\frac{3}{4} R}.
$$
It follows that
\beqn
\int_{B_R}\varphi^2 F^2(\nabla u) dm &=&\int_{B_R}\varphi^2 du(\nabla u) dm-\int_{B_R} d(\varphi^2 u )(\nabla u) dm\\
&=&-2\int_{B_R}u\varphi d\varphi(\nabla u)dm\\
&\leq &~\frac{1}{2}\int_{B_R}\varphi^2 F^2(\nabla u) dm +2\int_{B_R}u^2 F^{*2}(-d\varphi) dm,
\eeqn
that is,
$$
\int_{B_{\frac{2}{3}R}} F^2(\nabla u) dm\leq 4\int_{B_R}u^2 F^{*2}(-d \varphi) dm \leq \frac{C}{R^2} \int_{B_R}u^2 dm.
$$
Then, from (\ref{F-meanineq-1}), we obtain
\begin{equation*}
\sup _{B_{\frac{1}{2}R}} F^2(\nabla u) \leq  \frac{C(n, p, \kappa, \kappa^{*})}{R^{2}}\left(\frac{C_{\rm SD}(B_R)}{ R^2}+ C_{\rm SD}^{\beta}(B_R)e^{\frac{\beta\vartheta}{p}R} m(B_R)^{\frac{\beta}{p}}R^{-2\beta}\overline{\mathcal{K}}^{\beta}(p, R, \vartheta)\right)^{\frac{n}{2}}\int_{B_{R}}u^2 dm.
\end{equation*}
This completes the proof.
\end{proof}

From (\ref{s-2}) and Theorem \ref{gradient}, we get the following theorem directly.

\begin{thm}{\label{gradient-K}}
Let $(M, F, m)$ be an n-dimensional forward complete Finsler metric measure manifold equipped with a uniformly convex and uniformly smooth Finsler metric $F$. Assume that $\mathbf{S}(\nabla r)\geq-\vartheta$ along any minimal geodesic segment for some $\vartheta \geq 0$. Let $u$ be a harmonic function on $B_{R}=B_{R}^{+}(x_0)$. For $p>\frac{n}{2}$, there exist constants $0<r_{0}=r_{0}(n, \Lambda_{F})<1$, $\varepsilon=\varepsilon(n, p, \vartheta, \kappa, \kappa^{*}, r_{0})>0$ and $C=C(n, p, \kappa, \kappa^{*}, \vartheta , r_{0})>0$ such that if $\overline{\mathcal{K}}(p, 1/r_{0}, \vartheta)<\varepsilon$, then for any $x_{0}\in M$, $R\leq 1$ with $\partial B_{1}^{+}(x_0)\neq \emptyset$, the following holds
\begin{equation*}
\sup _{B_{\frac{1}{2}R}^{+}(x_0)} F^{2}(\nabla u) \leq \frac{C}{R^{2}}m(B_R)^{-1}\|u\|_{L^2(B_{R}^{+}(x_{0}))}^2.
\end{equation*}
\end{thm}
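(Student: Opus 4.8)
The statement is meant to follow ``directly'' by feeding the explicit Sobolev constant of Corollary~\ref{Sobolev-ineq} into the gradient bound of Theorem~\ref{gradient}; the only real work is the bookkeeping on the powers of $m(B_R)$ and the control of the curvature remainder. Fix $\Xi=2$. Since $F$ is uniformly convex and uniformly smooth, $\Lambda_F\le\min\{\sqrt\kappa,\sqrt{1/\kappa^*}\}<\infty$, so Corollary~\ref{Sobolev-ineq} provides $r_0=r_0(n,\Lambda_F)\in(0,1)$ and $\varepsilon_1=\varepsilon_1(n,p,\vartheta,\kappa,\kappa^*,r_0)>0$ such that, if $\overline{\mathcal{K}}(p,1/r_0,\vartheta)<\varepsilon_1$, then for $R\le1$ with $\partial B_1^+(x_0)\neq\emptyset$ the Sobolev inequality (\ref{CSD}) holds on $B_R:=B_R^+(x_0)$ with
\[
C_{\rm SD}(B_R)=\widetilde C\,m(B_R)^{-\frac2n}R^2 ,\qquad \widetilde C=\widetilde C(n,\vartheta,\kappa,\kappa^*,r_0).
\]

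I would then apply Theorem~\ref{gradient} to the harmonic function $u$ on $B_R$ (its hypotheses --- uniform convexity/smoothness of $F$ and the validity of (\ref{CSD}) --- now hold) and substitute this $C_{\rm SD}(B_R)$. The first term inside the bracket of Theorem~\ref{gradient} becomes $C_{\rm SD}(B_R)/R^2=\widetilde C\,m(B_R)^{-2/n}$, and the second becomes, up to a constant, $m(B_R)^{\beta(\frac1p-\frac2n)}\,e^{\frac{\beta\vartheta}{p}R}\,\overline{\mathcal{K}}^\beta(p,R,\vartheta)$ with $\beta=\frac{2p}{2p-n}$. The decisive elementary identity is
\[
\tfrac n2\,\beta\bigl(\tfrac1p-\tfrac2n\bigr)=\beta\bigl(\tfrac{n}{2p}-1\bigr)=\frac{2p}{2p-n}\cdot\frac{n-2p}{2p}=-1 ,
\]
so that raising the bracket to the power $n/2$ makes \emph{both} terms carry exactly the factor $m(B_R)^{-1}$; using $(a+b)^{n/2}\le2^{n/2-1}(a^{n/2}+b^{n/2})$ (valid since $n>2$) bounds the bracket, raised to $n/2$, by $C\,m(B_R)^{-1}\bigl(1+e^{cR}\,\overline{\mathcal{K}}^{\beta n/2}(p,R,\vartheta)\bigr)$ for a constant $c=c(n,p,\vartheta)$.

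It then remains to absorb the remainder. As $R\le1$, the factor $e^{cR}$ is bounded by $e^{c}$, and to bound $\overline{\mathcal{K}}(p,R,\vartheta)$ I would shrink the threshold so that Lemma~\ref{Ric-ineq} applies with $K=0$, $\Xi=2$, $r_1=R\le1\le1/r_0=r_2$; since $2-\frac np>0$ this gives $\overline{\mathcal{K}}(p,R,\vartheta)\le 2^{1/p}e^{\vartheta/(pr_0)}(Rr_0)^{2-n/p}\,\overline{\mathcal{K}}(p,1/r_0,\vartheta)$, which stays bounded once $\overline{\mathcal{K}}(p,1/r_0,\vartheta)<\varepsilon$. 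Choosing $\varepsilon=\varepsilon(n,p,\vartheta,\kappa,\kappa^*,r_0)$ as the minimum of $\varepsilon_1$ and the threshold from Lemma~\ref{Ric-ineq}, and collecting constants, yields
\[
\sup_{B_{R/2}^+(x_0)}F^2(\nabla u)\le\frac{C(n,p,\vartheta,\kappa,\kappa^*,r_0)}{R^2}\,m(B_R)^{-1}\,\|u\|_{L^2(B_R^+(x_0))}^2 ,
\]
as claimed. I do not anticipate a genuine obstacle: the only points needing care are the exponent bookkeeping above and the verification (through Lemma~\ref{Ric-ineq}, which itself rests on the doubling Theorem~\ref{doubling}) that the $\overline{\mathcal{K}}$-term does not blow up; beyond Corollary~\ref{Sobolev-ineq}, Theorem~\ref{gradient} and Lemma~\ref{Ric-ineq}, no new input is required.
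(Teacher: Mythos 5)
Your proposal is correct and follows exactly the route the paper intends: the paper derives this theorem ``directly'' by substituting the normalized Sobolev constant $C_{\rm SD}(B_R)=\widetilde C\,m(B_R)^{-2/n}R^2$ from Corollary \ref{Sobolev-ineq} into Theorem \ref{gradient}, and your exponent identity $\frac n2\beta(\frac1p-\frac2n)=-1$ together with the use of Lemma \ref{Ric-ineq} to keep $\overline{\mathcal{K}}(p,R,\vartheta)$ bounded supplies precisely the bookkeeping the paper leaves implicit.
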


\end{document}